\renewcommand{\k}{{\mathfrak{k}}}
\renewcommand{\t}{{\mathfrak{t}}}
\newcommand{\Kt}{\widetilde{K}}
\def\Ext{\mathop{\hbox {Ext}}\nolimits}
\def\dim{\mathop{\hbox {dim}}\nolimits}
\def\Ad{\mathop{\hbox {Ad}}\nolimits}
\def\ad{\mathop{\hbox {ad}}\nolimits}
\def\ch{\mathop{\hbox {ch}}\nolimits}
\def\det{\mathop{\hbox {det}}\nolimits}
\def\ell{\mathop{\hbox {ell}}\nolimits}
\def\Hom{\mathop{\hbox {Hom}}\nolimits}
\def\im{\mathop{\hbox {Im}}\nolimits}
\def\Im{\mathop{\hbox {Im}}\nolimits}
\def\Ker{\mathop{\hbox{Ker}}\nolimits}
\newcommand{\pf}{\begin{proof}}
\newcommand{\epf}{\end{proof}}
\newcommand{\eq}{\begin{equation}}
\newcommand{\eeq}{\end{equation}}
\newcommand{\eqn}{\begin{equation*}}
\newcommand{\eeqn}{\end{equation*}}
\newcommand{\fra}{\mathfrak{a}}
\newcommand{\frb}{\mathfrak{b}}
\newcommand{\frg}{\mathfrak{g}}
\newcommand{\frh}{\mathfrak{h}}
\newcommand{\frk}{\mathfrak{k}}
\newcommand{\frl}{\mathfrak{l}}
\newcommand{\frp}{\mathfrak{p}}
\newcommand{\frq}{\mathfrak{q}}
\newcommand{\frr}{\mathfrak{r}}
\newcommand{\frs}{\mathfrak{s}}
\newcommand{\frt}{\mathfrak{t}}
\newcommand{\fru}{\mathfrak{u}}
\newcommand{\frso}{\mathfrak{so}}
\newcommand{\caO}{\mathcal{O}}
\newcommand{\caS}{\mathcal{S}}
\newcommand{\bbC}{\mathbb{C}}
\newcommand{\bbH}{\mathbb{H}}
\newcommand{\bbN}{\mathbb{N}}
\newcommand{\bbR}{\mathbb{R}}
\newcommand{\R}{\mathbb{R}}
\newcommand{\bbZ}{\mathbb{Z}}
\newcommand{\tr}{\operatorname{tr}}
\newtheorem{theorem}[equation]{Theorem}%[section]
\newtheorem{cor}[equation]{Corollary}
\newtheorem{prop}[equation]{Proposition}
\newtheorem{lemma}[equation]{Lemma}
\theoremstyle{remark}
\newtheorem{remark}[equation]{Remark}
\theoremstyle{definition}
\newtheorem{definition}[equation]{Definition}
\newtheorem{example}[equation]{Example}
\newtheorem{conj}[equation]{Conjecture}
\numberwithin{equation}{section}
\begin{document}

%
%%%%%%% Title information %%%%%%%%%%%%%%%%%%%%%%%%%%%%%%%%%%%%%%%%%%%%%%%%%%
%
\title{Dirac cohomology, elliptic representations and endoscopy}
\author{Jing-Song Huang}
\address{Department of Mathematics, Hong Kong University of Science and
technology,
Clear Water Bay, Kowloon, Hong Kong SAR, China}
\email{mahuang@ust.hk}
\thanks{The research described in this paper is supported by grants from
Research Grant Council of HKSAR and National Science Foundation of China.
This paper will appear in `Representations of Reductive Groups, in Honor of 60th Birthday of David Vogan', edited by M. Nervins
and P. Trapa, published by Springer.}

%\date{\today}
\dedicatory{To David Vogan for his
60th birthday}

%
%%%%%%%%%%%%%%%%%%%%%%%%%%%%%%%%%%%%%%%%%%%%%%%%%%%%%%%%%%%%%
%
\begin{abstract} The first part (Sections 1-6) of this paper is a
survey of some of the recent developments
in the theory of Dirac cohomology,
especially the relationship of Dirac cohomology with $(\frg,K)$-cohomology and nilpotent Lie algebra cohomology;
the second part (Sections 7-12) is devoted to understanding the
unitary elliptic representations and endoscopic
transfer by using the techniques in Dirac cohomology.  A few problems
and conjectures are proposed for further investigations.
\end{abstract}
\keywords{Dirac cohomology, Harish-Chandra module, elliptic representation,
pseudo-coefficient, endoscopy}
\subjclass[2010]{22E46, 22E47}
\maketitle     % maketitle AFTER the abstract in AMS classes
%
%
%%%%%%%%%%%%%%%%%%%%%% Section 0 %%%%%%%%%%%%%%%%%%%%%%%%%%%%%
\section*{Introduction}
%%%%%%%%%%%%%%%%%%%%%%%%%%%%%%%%%%%%%%%%%%%%%%%%%%%%%%%%%%%%%
%

Since its appearance in the literature \cite{HP1}, Dirac cohomology has been playing an active role in
many of the recent developments in representation theory.  Back in late 1990s, Vogan made a conjecture
on the property of Dirac operator in the setting of a reductive
Lie algebra and its associated Clifford algebra \cite{V3}.  This property implies that the
standard parameter of the infinitesimal character of a Harish-Chandra module
$X$ and the infinitesimal character of its Dirac cohomology $H_D(X)$ are conjugate under
the Weyl group.
Vogan's conjecture was consequently verified in \cite{HP1}, and it has been extended
to several other settings by many authors (see the remark at the end of Section 1).

Dirac cohomology of various classes of representations is intimately related
to several classical subjects of representation theory like global
characters and geometric construction of the discrete series (see [HP2]).
The Dirac cohomology of several families of Harish-Chandra modules has been
determined. These modules include finite-dimensional modules and
irreducible unitary $A_\frq(\lambda)$-modules [HKP].  It was proved that
if $X$ is a unitary Harish-Chandra modules then
$$H^*(\frg,K;X\otimes F^*)\cong \Hom(H_D(F),H_D(X))$$
for any irreducible finite-dimensional module $F$. It is evident that
unitary representations with nonzero Dirac cohomology are closely related to automorphic representations.
In [HP2] we used Dirac cohomology to extend the Langlands formula on dimensions of automorphic
forms [L1] to a slightly more general setting.

Another aspect of Dirac cohomology is its connection with $\fru$-cohomology.
In particular,  when $G$ is Hermitian symmetric and $\fru$ is unipotent radical
of a parabolic subalgebra with Levi subgroup $K$,
[HPR] showed that for a unitary representation its Dirac cohomology
 is isomorphic to its $\fru$-cohomology
up to a twist of a one-dimensional character.  In particular, Enright's
calculation of $\fru$-cohomology [E] gives the Dirac
cohomology of the irreducible unitary highest weight modules.
The Dirac cohomology of unitary lowest weight modules of scalar type is calculated
more explicitly in [HPP].
The Euler characteristic of Dirac cohomology gives the
K-character of the Harish-Chandra module.  As an application, we generalized the classical
theorem of Littlewood on branching rules in [HPZ] and some of the other classical branching rules in [H2].

Kostant extended Vogan's conjecture
to the setting of the cubic Dirac operator and proved a non-vanishing
result on Dirac cohomology for highest weight modules in the most general
setting \cite{Ko3}.   He also determined the Dirac cohomology of
finite-dimensional modules in the equal rank case.
The Dirac cohomology for all irreducible highest weight modules
was determined in [HX] in terms of coefficients of Kazhdan-Lusztig
polynomials.  The general formula relating the Dirac cohomology and $\fru$-cohomology
for irreducible highest weight modules is also proven in [HX].

The aim of this paper is twofold:  First, we review some of the recent developments of
Dirac cohomology, in particular its relationship with $(\frg,K)$-cohomology
and $\fru$-cohomology.  Second, we use Dirac cohomology
as a tool to study a class of irreducible unitary representations, called elliptic
representations.
Harish-Chandra showed that the characters of irreducible or more generally
admissible representations are locally integrable functions and smooth
on the open dense subset of regular elements [HC1]. An elliptic representation
has a global character that does not vanish on the elliptic elements in the set of regular elements.
For real reductive Lie groups with compact Cartan subgroups, the irreducible tempered
elliptic  representations  are showed to be representations with nonzero
Dirac cohomology, and they are precisely the discrete series and some
of the limits of discrete series.
The characters of the irreducible tempered elliptic representations are
associated in a natural way to the supertempered distributions
defined by Harish-Chandra [HC4].
We conjecture that in general the elliptic unitary representations are precisely
the unitary representations with nonzero Dirac cohomology.

We show that an irreducible admissible
(not necessarily unitary) representation
is elliptic if and only if its Dirac index is not zero.  We prove that under the condition of
regular infinitesimal character, the Dirac index is zero if and only if the
Dirac cohomology is zero.  We conjecture that this equivalence holds in general without
the regularity condition.  We also show that the Harish-Chandra modules of
irreducible elliptic unitary representations with regular infinitesimal characters are
$A_\frq(\lambda)$-modules
for a real reductive algebraic group $G(\R)$.

We also observe a connection between the Labesse's calculation of the
endoscopic transfer of pseudo-coefficients of discrete series and
the calculation of the characters of Dirac index of discrete series. It offers
a new point of view for understanding the endoscopic transfer in the framework of
the Dirac cohomology and Dirac index.  To classify the irreducible unitary representations with
nonzero Dirac cohomology remains to be an open and interesting
problem.  We conjecture at the end of the paper that any irreducible
unitary representation which does not have nonzero Dirac cohomology
is induced from those with nonzero Dirac cohomology.

%%%%%%%%%%%%%%%%%%%%%% Section 1 %%%%%%%%%%%%%%%%%%%%%%%%%%%%%
\section{Vogan's conjecture on Dirac cohomology}
%%%%%%%%%%%%%%%%%%%%%%%%%%%%%%%%%%%%%%%%%%%%%%%%%%%%%%%%%%%%%
%

For a real reductive group $G$ with a Cartan involution $\theta$, denote by $\frg_0$ its Lie algebra
and assume that $K=G^\theta$ is a maximal compact subgroup of $G$.
Let $\frg=\frk\oplus\frp$ be the Cartan decomposition
for the complexified Lie algebra of $G$.  Let $B$ be a non-degenerate invariant symmetric bilinear
form on $\frg$, which restricts to the Killing form on the semisimple part $[\frg,\frg]$ of $\frg$.

Let $U(\frg)$ be the universal enveloping
algebra of $\frg$ and $C(\frp)$ the Clifford algebra of
$\frp$ with respect to $B$. Then one can consider the following version of the Dirac
operator:
$$
D=\sum_{i=1}^n Z_i\otimes Z_i \in U(\frg)\otimes C(\frp);
$$
here $Z_1,\dots,Z_n$ is an orthonormal basis of $\frp$ with respect to the symmetric bilinear
form $B$.  It follows that $D$ is independent of the choice of the orthonomal basis $Z_1,\dots,Z_n$
and it is invariant under the diagonal adjoint action of $K$.

The Dirac operator $D$ is a square root of Laplace operator
associated to the symmetric pair $(\frg,\frk)$. To explain this, we
start with a Lie algebra map
\begin{equation*}
\alpha:\frk\rightarrow C(\frp)
\end{equation*}
which is defined by the adjoint map $\ad:\frk\rightarrow\frso(\frp)$
composed with the embedding of $\frso(\frp)$ into $C(\frp)$ using
the identification $\frso(\frp)\simeq\bigwedge^2\frp$. The explicit
formula for $\alpha$ is (see \S2.3.3 [HP2])
\begin{equation}
\alpha(X)=-\frac{1}{4}\sum_{j}[X,Z_j]Z_j.
\end{equation}
Using $\alpha$ we can embed the Lie algebra $\frk$ diagonally into
$U(\frg)\otimes C(\frp)$, by
\begin{equation*}\label{Delta map}
X\mapsto X_\Delta=X\otimes1+1\otimes\alpha(X).
\end{equation*}
This embedding extends to $U(\frk)$. We denote the image of $\frk$
by $\frk_\Delta$, and then the image of $U(\frk)$ is the enveloping
algebra $U(\frk_\Delta)$ of $\frk_\Delta$.

Let $\Omega_\frak g$ be the Casimir operator
for $\frak g$, given by $\Omega_\frak g = \sum Z_i^2 -\sum W_j^2$,
where $W_j$ is an orthonormal basis for $\frak k_0$ with respect to
the inner product $-B$, where $B$ is the Killing form. Let
$\Omega_{\frk}=-\sum W_j^2$ be the Casimir operator for $\frak k$.
The image of $\Omega_\frk$ under $\Delta$ is denoted by
$\Omega_{\frk_\Delta}$.

Then
\begin{equation}\label{D^2}
D^2 = -\Omega_\frak g\otimes 1 + \Omega_{\frak k_\Delta} +
(||\rho_c||^2-||\rho||^2)1\otimes 1,
\end{equation}
where $\rho$ and $\rho_c$ are half sums of positive roots and compact
positive roots respectively.

The Vogan conjecture says that every element $z\otimes 1$ of
$Z(\frg)\otimes 1 \subset U(\frg)\otimes C(\frp)$ can be written as
$$\zeta(z)+Da+bD$$
where $\zeta(z)$ is in $Z(\k_\Delta)$,
and $a,b\in U(\frg)\otimes C(\frp)$.

A main result in [HP1]  is  introducing a differential $d$
on the $K$-invariants in $U(\frg)\otimes C(\frp)$ defined by a super bracket with $D$,
and determination of the cohomology of this
differential complex.  As a consequence,
Pand\v{z}i\'c and I proved the following theorem.
In the following we denote by $\frh$ a Cartan subalgebra of $\frg$
containing a Cartan subalgebra $\frt$ of $\frk$ so that $\frt^*$ is embedded into $\frh^*$,
and by $W$ and $W_K$ the Weyl groups of $(\frg,\frh)$ and $(\frk,\frt)$
respectively.

\begin{theorem}[\cite{HP1}]
Let $\zeta:
Z(\frg)\rightarrow Z(\frk)\cong Z(\frk_{\Delta})$ be the algebra homomorphism
that is determined
by the following commutative diagram:
\begin{equation*}\label{Vogan's diagram}
\CD
  Z(\mathfrak{g}) @> \zeta >> Z(\mathfrak{k}) \\
  @V \eta VV @V \eta_{\mathfrak{k}} VV  \\
  P(\mathfrak{h}^*)^{W} @>\mathrm{Res}>>
  P(\mathfrak{t}^*)^{W_K},
\endCD
\end{equation*}
where $P$ denotes the polynomial algebra, and  vertical maps $\eta$ and $\eta_\frk$  are Harish-Chandra
isomorphisms.
Then for each $z\in Z(\frg)$ one has
\begin{equation*}\label{Vogan's equation}
z\otimes 1-\zeta(z)=Da+aD, \text{\ for some\ }a\in U(\frg)\otimes C(\frp).
\end{equation*}

\end{theorem}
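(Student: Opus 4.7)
My plan is to prove the theorem in two stages: first establish existence of some $\zeta(z)\in Z(\frk_\Delta)$ with $z\otimes 1 - \zeta(z) = Da + aD$, then identify the resulting map $\zeta$ with the restriction map of the commutative diagram.

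For existence, I would work inside the $K$-invariant subalgebra $A = (U(\frg) \otimes C(\frp))^K$, which contains $Z(\frg)\otimes 1$, $\frk_\Delta$, and $Z(\frk_\Delta)$. On $A$ introduce the $\bbZ/2$-graded differential $d = [D,\cdot]$ given by the super-commutator with $D$. Using formula (\ref{D^2}), $D^2$ is a sum of elements that are central in $A$ (since $\Omega_\frg\otimes 1$ commutes with $U(\frg)\otimes 1$ and $\Omega_{\frk_\Delta}$ is central in the $\frk_\Delta$-invariants containing $A$), so $d^2 = 0$. Every $z\otimes 1$ is a cocycle because the Clifford factor is trivial, and every element of $Z(\frk_\Delta)$ is a cocycle because $D$ commutes with $\frk_\Delta$.

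The heart of the proof is to show $H(A, d) = Z(\frk_\Delta)$. I would filter $A$ by the PBW filtration on $U(\frg)$ combined with the Clifford filtration on $C(\frp)$, then pass to the associated graded $\gr A \cong (S(\frg)\otimes \textstyle\bigwedge \frp)^K$. The induced differential has symbol $\sum_i Z_i\otimes Z_i$, which, using $\frg = \frk\oplus\frp$, acts as a Koszul-type differential along the $\frp$-direction. Koszul acyclicity for the regular sequence $\{Z_i\}$ on $S(\frp)\otimes \bigwedge \frp$ yields $H(\gr A, \gr d) \cong S(\frk)^K \cong \gr Z(\frk_\Delta)$; a spectral sequence collapse argument then lifts this to $H(A, d) \cong Z(\frk_\Delta)$.

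Granting this, each cocycle $z\otimes 1$ is cohomologous to a unique $\zeta(z)\in Z(\frk_\Delta)$, furnishing both the desired equation and an algebra homomorphism $\zeta$. To match $\zeta$ with the restriction map in the diagram, I would apply $z\otimes 1 - \zeta(z) = Da + aD$ to highest-weight vectors in a tensor product $M(\lambda)\otimes S$ of a Verma module with a spin module for $C(\frp)$. Comparing the scalars $\eta(z)(\lambda + \rho)$ and $\eta_\frk(\zeta(z))(\mu)$ by which the two sides act (the right side vanishing on vectors annihilated by $D$) and tracking the $\rho$-shifts, one obtains $\eta_\frk\circ \zeta = \mathrm{Res}\circ \eta$. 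The main obstacle will be the cohomology computation: while the $E_1$ Koszul page is standard, verifying that the inclusion $\gr Z(\frk_\Delta)\hookrightarrow H(\gr A, \gr d)$ is actually an equality and that no higher differentials spoil the identification requires careful bookkeeping.
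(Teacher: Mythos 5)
Your proposal follows essentially the same route as [HP1]: pass to the $K$-invariant superalgebra $A = (U(\frg)\otimes C(\frp))^K$, define the differential $d$ as the super-bracket with $D$, verify $d^2 = 0$ from the formula for $D^2$, and establish $H(A,d)\cong Z(\frk_\Delta)$ by filtering so that the associated graded differential becomes a Koszul-type differential on $(S(\frg)\otimes\bigwedge\frp)^K$ with cohomology $S(\frk)^K$. The identification of $\zeta$ with $\mathrm{Res}$ via the Harish-Chandra maps — including tracking the $\rho$- versus $\rho_c$-shift absorbed by the lowest weight $-\rho_n$ of $S$ — is also the same final step as in [HP1], so apart from phrasing (you package the degree-reduction as a spectral sequence collapse, while [HP1] runs the descent directly) this is the paper's argument.
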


For any admissible $(\frg,K)$-module $X$,
Vogan ([V3], [HP1]) introduced the notion of \emph{Dirac cohomology} $H_D(X)$ of $X$.
Consider  the action of the Dirac operator $D$  on $X\otimes S$,
with $S$ the spinor module  for the Clifford algebra $C(\frp)$. The Dirac cohomology is defined as follows:
$$H_D(X)\colon =\Ker D/ \Im D \cap \Ker D.$$
It follows from the identity (\ref{D^2}) that $H_D(X)$ is a finite-dimensional module
for the spin double cover $\Kt$ of $K$. In case $X$ is unitary, $H_D(X)=\ker D=\ker D^2$
since $D$ is self-adjoint with respect to a natural Hermitian inner product on $X\otimes S$.
As a consequence of the above theorem, we have that
 $H_D(X)$, if nonzero, determines the infinitesimal character of $X$.

\begin{theorem}[\cite{HP1}]
Let $X$ be an admissible $(\frg,K)$-module with standard infinitesimal character parameter
$\Lambda\in \frh^*$.  Suppose that $H_D(X)$ contains a representation of
$\Kt$ with infinitesimal character $\lambda$.  Then
$\Lambda$ and $\lambda\in\frt^*\subseteq \frh^*$ are conjugate under $W$.
\end{theorem}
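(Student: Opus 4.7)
The plan is to play Theorem~1.1 against the fact that $Z(\frg)$ acts on $X$ by a single scalar, and then read off the comparison on the $Z(\frk)$-side by using a $\Kt$-type inside $H_D(X)$. First I would fix $z\in Z(\frg)$ and invoke Theorem~1.1 to write
\[
z\otimes 1 \;=\; \zeta(z) + Da + aD
\]
for some $a\in U(\frg)\otimes C(\frp)$, viewing both sides as operators on $X\otimes S$. Pick a nonzero class $[v]\in H_D(X)$ with representative $v\in\Ker D$ lying in a $\Kt$-isotypic component of infinitesimal character $\lambda$. Since $Dv=0$,
\[
(z\otimes 1)v - \zeta(z)v \;=\; D(av) \;\in\; \im D,
\]
and passing to the quotient $\Ker D/(\im D\cap\Ker D)$ shows that $z\otimes 1$ and $\zeta(z)$ induce the same scalar on $[v]$.

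Next I would identify those two scalars. On the one hand, $z$ acts on $X$ by $\chi_\Lambda(z)$, so $z\otimes 1$ acts on $X\otimes S$ by $\chi_\Lambda(z)$. On the other hand, $\zeta(z)\in Z(\frk_\Delta)\cong Z(\frk)$ acts on a $\Kt$-component of infinitesimal character $\lambda$ by $\chi_\lambda^{\frk}(\zeta(z))$, where the $\frk$-action on $X\otimes S$ is the diagonal one via $\Delta$ that integrates to the $\Kt$-action defining $H_D(X)$. Hence $\chi_\Lambda(z)=\chi_\lambda^{\frk}(\zeta(z))$ for every $z\in Z(\frg)$. Translating through the commutative diagram of Theorem~1.1 and using the Harish-Chandra normalizations, this becomes $\eta(z)(\Lambda)=\mathrm{Res}(\eta(z))(\lambda)=\eta(z)(\lambda)$, where on the right $\lambda$ is viewed inside $\frh^*$ via $\frt^*\hookrightarrow\frh^*$. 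Since $\eta$ is an isomorphism onto $P(\frh^*)^{W}$, we conclude $p(\Lambda)=p(\lambda)$ for every $W$-invariant polynomial $p$, and Chevalley's theorem that $W$-invariants separate $W$-orbits then forces $\Lambda\in W\cdot\lambda$.

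The main obstacle has already been absorbed into Theorem~1.1; the remaining deduction is essentially bookkeeping. The only slightly delicate point I would take care over is verifying that the $\frk$-module structure on $X\otimes S$ used to define $H_D(X)$ really coincides with the one coming from $\Delta:\frk\hookrightarrow U(\frg)\otimes C(\frp)$, so that $\zeta(z)\in Z(\frk_\Delta)$ genuinely acts on a $\Kt$-isotypic piece by the infinitesimal-character scalar $\chi_\lambda^{\frk}(\zeta(z))$ and the manipulation above is legitimate.
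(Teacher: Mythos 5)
Your proof is correct and follows essentially the route the paper indicates: Theorem~1.1 is invoked to write $z\otimes 1-\zeta(z)=Da+aD$, the vanishing of $Dv$ for a representative $v$ of a Dirac cohomology class kills the right-hand side modulo $\im D\cap\Ker D$ (noting, as you should make explicit, that both $z\otimes 1$ and $\zeta(z)$ commute with $D$ and hence preserve $\Ker D$), and the resulting scalar identity $\chi_\Lambda(z)=\chi^{\frk}_\lambda(\zeta(z))$ is unpacked through the commutative diagram and Chevalley's theorem to yield $W$-conjugacy. The paper itself states the theorem as an immediate consequence of Theorem~1.1 with the detailed verification deferred to [HP1], and your argument is precisely that standard deduction, including the correct identification of the diagonal $\frk_\Delta$-action with the $\Kt$-module structure defining $H_D(X)$.
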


The above theorem is proved in [HP1] for a connected semisimple
Lie group $G$.  It is straightforward to extend the result to a
possibly disconnected reductive Lie group  in Harish-Chandra's class [DH2].

Vogan's conjecture implies a refinement of the celebrated
Parthasarathy's Dirac inequality, which is an extremely useful tool
for the classification of irreducible unitary representations of
reductive Lie groups.

\begin{theorem}[Extended Dirac Inequality \cite{P}, \cite{HP1}]
%\cite{VZ})}
Let $X$ be an irreducible unitary $(\frg,K)$-module with infinitesimal
character $\Lambda$.  Fix a representation of $K$ occurring in $X$
with a highest weight $\mu\in\t^*$, and a positive root system $\Delta^+(\frg)$
for $\frt$ in $\frg$.  Here $\t$ is a Cartan subalgebra of $\k$.
Write
$$\rho_c=\rho(\Delta^+(\frk)),\ \rho_n=\rho(\Delta^+(\frp)).$$
Fix an element $w\in W_K$ such that $w(\mu-\rho_n)$ is dominant
for $\Delta^+(\frk)$. Then
$$\langle w(\mu-\rho_n)+\rho_c,w(\mu-\rho_n)+\rho_c\rangle
\geq \langle \Lambda,\Lambda\rangle.$$
The equality  holds if and only if some $W$
conjugate of
$\Lambda$ is equal to
$w(\mu-\rho_n)+\rho_c$.
\end{theorem}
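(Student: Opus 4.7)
The plan is to derive the inequality from the non-negativity of $D^2$ on $X\otimes S$ combined with formula (1.2), and to extract the equality case from Theorem 1.3 (Vogan's conjecture).

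First, since $X$ is unitary, $X\otimes S$ carries a positive-definite Hermitian inner product with respect to which $D$ is self-adjoint; hence $D^2\geq 0$ on every $\Kt$-isotypic component of $X\otimes S$. Next, I would produce an explicit $\Kt$-type on which to test the inequality. Let $V_\mu\subset X$ be the $K$-type of highest weight $\mu$. Viewed as a $\Kt$-module with respect to $\Delta^+(\frp)$, the spinor module $S$ has $-\rho_n$ as an extremal weight; by a standard highest-weight argument, $V_\mu\otimes S$ therefore contains a $\Kt$-type of highest weight $\tau:=w(\mu-\rho_n)$, where $w\in W_K$ is chosen to move $\mu-\rho_n$ into the dominant chamber for $\Delta^+(\frk)$. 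This $\Kt$-type persists in the larger module $X\otimes S$.

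With $\tau$ in hand, the computation is straightforward. The Casimir $\Omega_\frg$ acts on $X$ by the infinitesimal character, namely the scalar $\langle\Lambda,\Lambda\rangle-\langle\rho,\rho\rangle$, while $\Omega_{\frk_\Delta}$ acts on the $\Kt$-type of highest weight $\tau$ by $\langle\tau+\rho_c,\tau+\rho_c\rangle-\langle\rho_c,\rho_c\rangle$. Substituting into (1.2) and cancelling the terms $\langle\rho_c,\rho_c\rangle$ and $\langle\rho,\rho\rangle$, one obtains
$$D^2\big|_{V_\tau}=\langle w(\mu-\rho_n)+\rho_c,\,w(\mu-\rho_n)+\rho_c\rangle-\langle\Lambda,\Lambda\rangle,$$
and non-negativity of $D^2$ yields the claimed inequality.

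For the equality case, suppose the two sides coincide. Then $D^2$ vanishes on $V_\tau\subset X\otimes S$, and self-adjointness of $D$ forces $\ker D=\ker D^2$ on this unitary module, so $V_\tau$ descends to a nonzero $\Kt$-type in $H_D(X)$. Its $\Kt$-infinitesimal character is $w(\mu-\rho_n)+\rho_c$; applying Theorem 1.3 to this $\Kt$-type immediately forces $w(\mu-\rho_n)+\rho_c$ to be $W$-conjugate to $\Lambda$, as required. Conversely, if $\Lambda$ is $W$-conjugate to $w(\mu-\rho_n)+\rho_c$, the displayed formula makes $D^2$ vanish on $V_\tau$, giving equality.

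The main obstacle is the second step: confirming that $V_\mu\otimes S$ really contains a $\Kt$-type with highest weight $w(\mu-\rho_n)$. This rests on the classical observation that the extremal weights of the $\Kt$-module $S$ are, up to $W_K$, of the form $\pm\rho_n$, after which the PRV-type highest-weight argument produces the desired summand. Everything else reduces to the Casimir eigenvalue computation and a clean appeal to Theorem 1.3, which supplies the refinement beyond Parthasarathy's original inequality.
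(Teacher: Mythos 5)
Your proposal is correct and follows exactly the approach the paper implicitly attributes to Parthasarathy~\cite{P} (for the inequality) and~\cite{HP1} (for the equality case via Vogan's conjecture): positivity of $D^2$ on a unitary module, the formula~(1.2) for $D^2$, the PRV-type argument producing the $\Kt$-type $E_\tau$ with $\tau=w(\mu-\rho_n)$ inside $E_\mu\otimes S$, and Theorem~1.3 to identify the equality case. One small imprecision: it is not true that the extremal weights of $S$ are, up to $W_K$, just $\pm\rho_n$ --- by Lemma~2.1 the highest weights of the irreducible summands of $S$ are $w\rho-\rho_c$ for $w\in W(\frg,\frt)^1$, which form a larger set in general. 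What your argument actually needs (and what is true) is only that $-\rho_n$ is an extremal weight of one such summand, namely the lowest weight of $E_{w_{0,K}w_0\rho-\rho_c}$, where $w_{0,K}$ and $w_0$ are the longest elements of $W_K$ and of $W(\frg,\frt)$; with that, PRV applied to $E_\mu\otimes E_{w_{0,K}w_0\rho-\rho_c}$ yields the desired $\Kt$-type $E_\tau$ and the rest of your computation goes through.
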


\begin{remark} Dirac cohomology becomes a useful tool in representation theory
 and related areas with Vogan's conjecture being extended to various different settings,
 most notably Kostant's generalization to the setting of the cubic Dirac operator,
 which will be discussed in detail in Section 4. We also mention the following
 extensions.
\begin{item}
\item{(i)} Alekseev and Meinrenken have proved a version of Vogan's conjecture in
their study of `Lie theory and the Chern-Weil
homomorphism' \cite{AM}.

\item{(ii)} Kumar has proved a similar version of Vogan's conjecture
in `Induction functor in non-commutative equivariant
cohomology and Dirac cohomology' \cite{Ku}.

\item{(iii)} Pand\v{z}i\'c and I have extended the Vogan's conjecture
to  the symplectic Dirac operator in Lie
superalgebras \cite{HP3}.

\item{(iv)} Kac, M\"oseneder Frajria and Papi have extended the Vogan's conjecture to the affine cubic Dirac operator in the
affine Lie algebras [KMP].

\item{(v)} Barbasch, Ciubotaru and Trapa have extended the Vogan's conjecture to the setting of the graded
affine Hecke algebras [BCT].

\item{(vi)} Ciubotaru and Trapa have proved a version of the Vogan's conjecture for studying Weyl group representations in connection with the Springer theory [CT].

\end{item}
\end{remark}

%%%%%%%%%%%%%%%%%%%%%% Section 2 %%%%%%%%%%%%%%%%%%%%%%%%%%%%%
\section{Dirac cohomology of Harish-Chandra modules}
%%%%%%%%%%%%%%%%%%%%%%%%%%%%%%%%%%%%%%%%%%%%%%%%%%%%%%%%%%%%%
%
We now describe the Dirac cohomology of finite-dimensional modules and irreducible unitary representations with strongly regular infinitesimal characters, which are $A_\frq(\lambda)$-modules.
These results are proved in [HKP].

Recall that $\frt_0$ is a Cartan
subalgebra of $\frk_0$ and $\frh_0\supseteq \frt_0 $ is a
fundamental Cartan subalgebra of $\frg_0$. Then  $\frh_0=\frt_0\oplus\fra_0$ with $\fra_0$ the centralizer of $\frt_0$ in $\frp_0$. Passing to complexifications,
we will view $\frt^*$ as a subspace of $\frh^*$ by extending the functionals to act as 0 on $\fra$.

We denote by $\Delta(\frg,\frh)$ (respectively $\Delta(\frg,\frt)$) the root system of $\frg$ with respect
to $\frh$ (respectively $\frt$). The root system of $\frk$ with respect to $\frt$ will be denoted by
$\Delta(\frk,\frt)$. Note that $\Delta(\frg,\frh)$ and $\Delta(\frk,\frt)$ are reduced, while
$\Delta(\frg,\frt)$ is in general not reduced.
The Weyl groups corresponding to the above root systems are denoted by
$$W=W(\frg,\frh), W(\frg,\frt), \text{\ and\ }
W_K=W(\frk,\frt).$$

Throughout this section we fix compatible choices of positive roots $\Delta^+(\frg,\frh)$,
$\Delta^+(\frg,\frt)$ and $\Delta^+(\frk,\frt)$. As usual, we denote by $\rho$ the half sum of positive roots
for $(\frg,\frh)$, by $\rho_c$ the half sum of positive roots for $(\frk,\frt)$, and by $\rho_n$ the difference
$\rho-\rho_c$. Then $\rho,\rho_c,\rho_n\in\frt^*$.

We let $\frt^*_{\bbR}=i\frt^*_0$ and let $\frh^*_{\bbR}=i\frt^*_0+\fra^*_0$.
Our fixed form $B$ on $\frg$ induces inner products on $\frt^*_{\bbR}$ and $\frh^*_{\bbR}$.

We denote by $C_{\frg}(\frh^*_{\bbR})$ (respectively $C_{\frg}(\frt^*_{\bbR})$, $C_{\frk}(\frt^*_{\bbR})$) the
closed Weyl chamber corresponding to $\Delta^+(\frg,\frh)$ (respectively $\Delta^+(\frg,\frt)$,
$\Delta^+(\frk,\frt)$). Then $C_{\frg}(\frt^*_{\bbR})$ is contained in $C_{\frg}(\frh^*_{\bbR})$. Namely, if
$\mu\in\frt^*_{\bbR}\subset\frh^*_{\bbR}$ has nonnegative inner product with every element of
$\Delta^+(\frg,\frt)$, then for any $\alpha\in \Delta^+(\frg,\frh)$
$$\langle \mu,\alpha\rangle =\langle \mu,\alpha|_\frt\rangle +\langle\mu,
\alpha|_\fra\rangle \geq 0,$$ because $\mu$ is orthogonal to $\fra^*$.

We define
$$
W(\frg,\frt)^1=\{w\in W(\frg,\frt)\ | w(C_{\frg}(\frt^*_{\bbR}))\subset C_{\frk}(\frt^*_{\bbR})\}.
$$
It is clear that $W(\frk,\frt)$ is a subgroup of $W(\frg,\frt)$, and that the multiplication map induces a
bijection from $W(\frk,\frt)\times W(\frg,\frt)^1$ onto $W(\frg,\frt)$. Thus the set $W(\frg,\frt)^1$ is in
bijection with $W(\frk,\frt)\backslash W(\frg,\frt)$.
Let $E_\mu$ denote the irreducible representation of $\frk$ with highest weight $\mu$.
 The following fact can be found in
[P] (see also Chapter II Lemma 6.9 \cite{BW}, or Lemma 9.3.2 \cite{Wa1}):

\begin{lemma} \label{spinhwts}
We have the following isomorphism for $\frk$-modules:
$$
S\cong \bigoplus_{w\in W(\frg,\frt)^1}2^{[l_0/2]}E_{w\rho-\rho_c},
$$
where $l_0=\dim\fra$ and $mE_\mu$ means a direct sum of $m$ copies of $E_\mu$.
\end{lemma}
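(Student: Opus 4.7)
The plan is to reduce the lemma to a $\frt$-character identity, using that a finite-dimensional $\frk$-module is determined by its $\frt$-weight multiplicities.

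First I would isolate the $\fra$-factor of the spin module. Writing $\frp = \fra \oplus \frp'$ orthogonally, where $\frp'$ is the span of the nonzero $\frt$-weight spaces of $\frp$, one has a super-tensor factorization $C(\frp) \cong C(\fra)\,\widehat{\otimes}\,C(\frp')$ and a corresponding vector-space factorization $S \cong S_\fra \otimes S_{\frp'}$ with $\dim S_\fra = 2^{[l_0/2]}$. Because $\fra \subset \frh$ centralizes $\frt$, the formula (1.1) for $\alpha(X)$ shows that for $X \in \frt$ every term involving a basis vector of $\fra$ vanishes, so $\alpha(\frt) \subset C(\frp')$ and $\frt$ acts trivially on $S_\fra$. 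Realizing $S_{\frp'}$ as $\bigwedge\fru^+$ on a maximal $\frt$-isotropic subspace $\fru^+ \subset \frp'$ spanned by the root vectors for $\Delta^+(\frp',\frt)$ then yields
$$\ch_\frt(S) \;=\; 2^{[l_0/2]}\prod_{\alpha \in \Delta^+(\frp',\frt)}\bigl(e^{\alpha/2}+e^{-\alpha/2}\bigr).$$

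Next I would identify this character with $2^{[l_0/2]}\sum_{w \in W(\frg,\frt)^1}\ch(E_{w\rho-\rho_c})$. For each $w \in W(\frg,\frt)^1$, by definition $w\rho \in C_\frk(\frt^*_\bbR)$, so $w\rho - \rho_c$ is $\frk$-dominant; writing $w\rho - \rho = -\sum_{\alpha \in \Phi(w)}\alpha$ with $\Phi(w)$ the set of positive roots sent negative by $w^{-1}$, the defining property of $W(\frg,\frt)^1$ forces $\Phi(w) \subset \Delta^+(\frp',\frt)$, and since the inversion set determines the Weyl group element, distinct $w$ give distinct subsets. Applying the Weyl character formula for $\frk$ to each $E_{w\rho-\rho_c}$ and using the coset bijection $W_K \times W(\frg,\frt)^1 \simeq W(\frg,\frt)$, the match reduces to the symmetric-space Weyl identity
$$D_\frk \prod_{\alpha \in \Delta^+(\frp',\frt)}\bigl(e^{\alpha/2}+e^{-\alpha/2}\bigr) \;=\; \sum_{\sigma \in W(\frg,\frt)}\sgn(\sigma)\,e^{\sigma\rho}.$$

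The main obstacle is this last identity, a signed analogue of the classical Weyl denominator formula. I would verify it by matching coefficients at each $\frk$-dominant exponential: the $W_K$-antisymmetry of $D_\frk$ annihilates all cross-terms, leaving exactly the contribution $e^{w\rho}$ from each $w \in W(\frg,\frt)^1$ on the right, which in turn matches the expansion of the product on the left indexed by $\Phi(w)$. In the equal-rank case $l_0 = 0$ this is Parthasarathy's original identity, and the general case is obtained simply by carrying the $\dim S_\fra = 2^{[l_0/2]}$ factor through from the first step, yielding the claimed decomposition.
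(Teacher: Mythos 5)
Your overall strategy --- reduce to $\frt$-characters, factor $C(\frp)\cong C(\fra)\,\bar\otimes\,C(\frp')$, observe that $\alpha(\frt)\subset C(\frp')$ so $\frt$ acts trivially on $S_\fra$, compute $\ch_\frt S = 2^{[l_0/2]}\prod_{\alpha\in\Delta^+(\frp',\frt)}(e^{\alpha/2}+e^{-\alpha/2})$, and match against the Weyl character formula for $\frk$ --- is a sensible route, in the spirit of the references [P], [BW], [Wa1] that the paper cites without giving its own proof. The first part of your argument, through the spin character computation and the inversion-set observation, is fine.

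The gap is in the final ``symmetric-space Weyl identity,'' which is false as stated. The Weyl character formula for $\frk$ gives
\[
\sum_{w\in W(\frg,\frt)^1}\ch E_{w\rho-\rho_c}
= D_\frk^{-1}\sum_{w\in W(\frg,\frt)^1}\,\sum_{\sigma\in W_K}\sgn(\sigma)\,e^{\sigma w\rho},
\]
in which only the $W_K$-sign $\sgn(\sigma)$ appears, not $\sgn(\sigma w)$. Your identity has $\sum_{\tau\in W(\frg,\frt)}\sgn(\tau)e^{\tau\rho}$ on the right; re-indexing $\tau=\sigma w$ under the bijection $W_K\times W(\frg,\frt)^1\to W(\frg,\frt)$ introduces the extra factor $\sgn(w)$, which is not identically $1$ on $W(\frg,\frt)^1$. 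Already for $G=SL(2,\R)$, $K=SO(2)$ (so $\frt=\frh$, $l_0=0$, $\rho_c=0$, $W_K=\{1\}$, $W(\frg,\frt)^1=\{1,s_\alpha\}$), the left side of your identity is $e^{\alpha/2}+e^{-\alpha/2}$ while the right side is $e^{\alpha/2}-e^{-\alpha/2}$. In the equal rank case what you have written is the Weyl denominator factorization $D_\frg = D_\frk\cdot\prod_{\alpha\in\Delta^+(\frp,\frt)}(e^{\alpha/2}-e^{-\alpha/2})$ --- note the minus sign --- which computes the virtual character $\ch S^+-\ch S^-$ of the Dirac index, not $\ch S=\ch S^++\ch S^-$. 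The identity you actually need,
\[
D_\frk\,\prod_{\alpha\in\Delta^+(\frp,\frt)}\bigl(e^{\alpha/2}+e^{-\alpha/2}\bigr)
= \sum_{w\in W(\frg,\frt)^1}\,\sum_{\sigma\in W_K}\sgn(\sigma)\,e^{\sigma w\rho},
\]
is correct, but it is not a consequence of $W_K$-antisymmetry alone: after expanding the product into $2^{|\Delta^+(\frp,\frt)|}$ monomials, one must show that each surviving $W_K$-orbit antisymmetrizes to a $\frk$-dominant $e^{w\rho}$ with net coefficient $+1$ rather than $\sgn(w)$, and tracking that compensating sign is exactly where the nontrivial content of the lemma lies. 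The ``match coefficients'' step therefore needs a genuine argument (e.g.\ the inversion-set parametrization you begin, carried out with the $W_K$-sign bookkeeping rather than the full $W(\frg,\frt)$-sign), not the invocation of a Weyl-denominator-type formula.
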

Clearly,  $S$ is isomorphic to the Dirac cohomology $H_D(\bbC)$ of the trivial representation $\bbC$.

Let $V_\lambda$ be the irreducible finite-dimensional $(\frg,K)$-module with highest weight $\lambda\in\frh^*$.
If Dirac cohomology of $V_\lambda$ is nonzero, then $\lambda+\rho\in\frt^*$ and thus $\lambda\in\frt^*$.
We have to identify highest weights $\gamma$ of $\tilde K$-submodules of
$V_\lambda\otimes S$ which satisfy $\|\gamma+\rho_c\|=\|\lambda+\rho\|$.

\begin{theorem}[Theorem 4.2 \cite{HKP}]
Let $V_\lambda$ be an irreducible finite-dimensional $(\frg,K)$-module with highest weight $\lambda$. If
$\lambda\neq\theta\lambda$ then the Dirac cohomology of $V_\lambda$ is zero. If $\lambda=\theta\lambda$, then as
a $\frk$ module the Dirac cohomology of $V_\lambda$ is:
$$
H_D(V_\lambda)=\bigoplus_{w\in W(\frg,\frt)^1}2^{[l_0/2]}E_{w(\lambda+\rho)-\rho_c}.
$$
\end{theorem}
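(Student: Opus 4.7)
My plan combines three ingredients: the $D^2$ formula (\ref{D^2}), the Huang-Pand\v{z}i\'c theorem on infinitesimal characters in Dirac cohomology, and the spinor decomposition of Lemma \ref{spinhwts}. By (\ref{D^2}), on a $\Kt$-isotypic component $E_\gamma\subset V_\lambda\otimes S$ the operator $D^2$ acts by the scalar $\|\gamma+\rho_c\|^2-\|\lambda+\rho\|^2$. Since $V_\lambda$ is finite-dimensional, $V_\lambda\otimes S$ carries a natural positive definite Hermitian form (coming from a compact real form of $\frg_0$) that makes $D$ self-adjoint, so $\ker D=\ker D^2$. It follows that
$$H_D(V_\lambda)=\bigoplus_{\gamma}m(\gamma,V_\lambda\otimes S)\,E_\gamma,$$
the sum running over highest weights $\gamma$ with $\|\gamma+\rho_c\|=\|\lambda+\rho\|$.

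For the vanishing assertion, suppose $H_D(V_\lambda)\neq 0$. The Huang-Pand\v{z}i\'c theorem then forces $w(\lambda+\rho)\in\frt^*$ for some $w\in W$. The compatibility of positive systems ensures that $\theta$ preserves $\Delta^+(\frg,\frh)$ (positivity of a root is determined by its $\frt$-restriction, and $\theta|_\frt=\mathrm{id}$), so $\theta$ fixes the open dominant chamber. Using $\theta W\theta^{-1}=W$, the identity $\theta(w(\lambda+\rho))=w(\lambda+\rho)$ rearranges to place $\theta(\lambda+\rho)$ in $W(\lambda+\rho)$; since both weights are dominant and $\lambda+\rho$ is regular, they coincide, so $\lambda+\rho\in\frt^*$, whence $\lambda\in\frt^*$, contradicting the hypothesis.

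When $\lambda=\theta\lambda$, Lemma \ref{spinhwts} gives
$$V_\lambda\otimes S\cong\bigoplus_{w\in W(\frg,\frt)^1}2^{[l_0/2]}\bigl(V_\lambda\otimes E_{w\rho-\rho_c}\bigr).$$
Because $\lambda\in\frt^*$ is $\frg$-regular, each extremal $\frt$-weight $w\lambda$ appears in $V_\lambda|_\frk$ with multiplicity one and generates a $\frk$-submodule $E_{w\lambda}$; the Cartan component in $E_{w\lambda}\otimes E_{w\rho-\rho_c}$ produces $E_{w(\lambda+\rho)-\rho_c}$ inside $V_\lambda\otimes E_{w\rho-\rho_c}$, and its highest weight satisfies $\|w(\lambda+\rho)\|=\|\lambda+\rho\|$. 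Summing over $w$ with the factor $2^{[l_0/2]}$ then yields the claimed formula, provided these are the only norm-matching contributions.

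The step I expect to be most delicate is verifying that no other $\frk$-type in the summands $V_\lambda\otimes E_{w'\rho-\rho_c}$ satisfies the norm equality. The argument I intend is that any other highest weight $\gamma'$ appearing there is strictly smaller than $w'(\lambda+\rho)-\rho_c$ in the $\frk$-dominance order; for $\frk$-dominant weights this yields $\|\gamma'+\rho_c\|<\|\lambda+\rho\|$. A cross-summand check using regularity of $\lambda+\rho$ then shows that $E_{w(\lambda+\rho)-\rho_c}$ occurs in the $w'$-summand only when $w=w'$, giving the multiplicity $2^{[l_0/2]}$ and completing the proof.
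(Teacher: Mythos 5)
Your overall strategy (reduce to $\ker D=\ker D^2$ via the $D^2$ formula, use the Huang--Pand\v{z}i\'c theorem for the vanishing statement, and use Lemma \ref{spinhwts} for the formula) is the right one, and the vanishing argument you give is essentially complete and correct. One small inaccuracy there: the positive-definite Hermitian form coming from the compact real form makes $D$ skew-adjoint on $V_\lambda\otimes S$, not self-adjoint (this is why the paper records that $D^2$ has \emph{non-positive} eigenvalues on $F\otimes S$), but the conclusion $\ker D=\ker D^2$ still holds and your use of it is fine.

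The formula half, however, has genuine gaps. First, $\lambda$ itself need not be $\frg$-regular (only $\lambda+\rho$ is), so the premise of your third paragraph is false in general. Moreover, the assertion that each extremal $\frt$-weight $w\lambda$ appears in $V_\lambda|_\frk$ with multiplicity one and generates a $\frk$-submodule $E_{w\lambda}$ is not obvious and is not argued: when $\fra\neq 0$, a $\frt$-weight of $V_\lambda$ can have higher multiplicity than the underlying $\frh$-weight, and showing that the extremal vector is a $\frk$-highest weight vector requires relating $\Delta^+(\frk,\frt)$ to $w\Delta^+(\frg,\frt)$ together with a lift of $w\in W(\frg,\frt)^1$ to $W(\frg,\frh)$. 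Second, and this is the crux of the theorem, the ``no other norm-matching contributions'' step cannot be finished by the $\frk$-dominance-order argument you sketch: in $V_\lambda\otimes E_{w'\rho-\rho_c}$ the $\frk$-highest weights are not all $\leq w'(\lambda+\rho)-\rho_c$, since $V_\lambda|_\frk$ already contains mutually incomparable $\frk$-types (the several $E_{w\lambda}$ among them), so the Cartan component of the $w'$-summand is not a unique maximum. What is actually needed is a Parthasarathy-type estimate: writing a $\frk$-highest weight $\gamma$ of $V_\lambda\otimes S$ as $\mu+\sigma$ with $\mu$ a $\frt$-weight of $V_\lambda$ and $\sigma$ a weight of $S$, one must prove $\|\mu+\sigma+\rho_c\|\leq\|\lambda+\rho\|$, with equality precisely when $\mu=w\lambda$ and $\sigma+\rho_c=w\rho$ for some $w\in W(\frg,\frt)$ (this is a Cauchy--Schwarz argument against the vector $w(\lambda+\rho)$, using extremality of weights of $V_\lambda$ and of $S$). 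Combined with the Huang--Pand\v{z}i\'c theorem one also needs the identification $W(\frg,\frh)(\lambda+\rho)\cap\frt^*=W(\frg,\frt)(\lambda+\rho)$ (valid because $\lambda+\rho$ is regular and $\theta$-fixed), which is the correct formulation of your ``cross-summand check.'' In short, you have identified the correct $\Kt$-types and the correct tools, but the decisive inequality, the Weyl-orbit identification, and the multiplicity count $2^{[l_0/2]}$ are asserted rather than derived.
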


We now describe the Dirac cohomology of a unitary $A_\frq(\lambda)$-module.
Recall that a $\theta$-stable parabolic subalgebra
$$
\frq=\frl\oplus\fru
$$
of $\frg$ is by definition the sum of nonnegative eigenspaces of $\ad(H)$, where $H$ is some fixed element of
$i\frt_0$ (and consequently $\ad(H)$ is semisimple with real eigenvalues).
 The Levi
subalgebra $\frl$ of $\frq$ is the zero eigenspace of $\ad(H)$, while the nilradical $\fru$ of $\frq$ is the sum
of positive eigenspaces of $\ad(H)$. Note that clearly $\frl\supseteq\frh$. Since $\theta(H)=H$, $\frl,\fru$ and
$\frq$ are all invariant under $\theta$. Furthermore, $\frl$ is real, i.e., $\frl$ is the complexification of a
subalgebra $\frl_0$ of $\frg_0$. Let $L$ denote the connected subgroup of $G$ corresponding to $\frl_0$. We will
assume that our fixed choice of positive roots $\Delta^+(\frg,\frh)$ is compatible with $\frq$ in the sense that
the set of roots
$$
\Delta(\fru)=\{\alpha\in\Delta(\frg,\frh) | \frg_\alpha\subset\fru\}
$$
is contained in $\Delta^+(\frg,\frh)$. Note that $\Delta(\frl,\frh)\subseteq\Delta(\frg,\frh)$, and we set
$\Delta^+(\frl,\frh)=\Delta(\frl,\frh)\cap\Delta^+(\frg,\frh)$. Likewise,
$\Delta(\frl,\frt)\subseteq\Delta(\frg,\frt)$, and we set
$\Delta^+(\frl,\frt)=\Delta(\frl,\frt)\cap\Delta^+(\frg,\frt)$.

Let $\lambda\in\frl^*$ be admissible. In other words, $\lambda$ is the complexified differential of a unitary
character of $L$,  satisfying the following positivity condition:
$$
\langle\alpha,\lambda|_\frt\rangle\geq 0,\qquad\text{for all}\quad \alpha\in\Delta(\fru).
$$
Then $\lambda$ is orthogonal to all roots of $\frl$, so we can view $\lambda$ as an element of $\frh^*$.

Given $\frq$ and $\lambda$ as above, define
$$
\mu(\frq,\lambda)=\lambda|_\frt+2\rho(\fru\cap\frp).
$$
Here $\rho(\fru\cap\frp)=\rho(\Delta(\fru\cap\frp))$ is the half sum of all elements of $\Delta(\fru\cap\frp)$,
i.e., of all $\frt$-weights of $\fru\cap\frp$, counted with multiplicity. We will use analogous notation for
other $\frt$-stable subspaces of $\frg$.

The following result of Vogan and Zuckerman characterizes the $A_\frq(\lambda)$ modules we wish to consider.

\begin{theorem}[\cite{VZ},\cite{V2}]
Let $\frq$ be a $\theta$-stable parabolic subalgebra of $\frg$ and let $\lambda\in\frh^*$ be admissible as
defined above. Then there is a unique unitary $(\frg,K)$-module $A_\frq(\lambda)$ with the following properties:

{\bf (i)} The restriction of $A_\frq(\lambda)$ to $\frk$ contains the representation with highest weight
$\mu(\frq,\lambda)$ defined as above;

{\bf (ii)} $A_\frq(\lambda)$ has infinitesimal character $\lambda+\rho$;

{\bf (iii)} If the representation of $\frk$ occurs in $A_\frq(\lambda)$, then its highest weight is of the form
\begin{equation}
\label{eqnkt} \mu(\frq,\lambda)+\sum_{\beta\in\Delta(\fru\cap\frp)}n_\beta\beta
\end{equation}
with $n_\beta$ non-negative integers. In particular, $\mu(\frq,\lambda)$ is the lowest $K$-type of
$A_\frq(\lambda)$ (and its multiplicity is 1.)
\end{theorem}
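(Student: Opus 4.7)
The plan is to construct $A_\frq(\lambda)$ by cohomological induction from a one-dimensional $\frq$-module built from $\lambda$, verify properties (i)--(iii) by standard calculations, and deduce uniqueness from Vogan's minimal $K$-type classification.

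First I would extend $\lambda$ to a character of $\frq$ by letting $\fru$ act trivially, producing a one-dimensional $(\frq, L \cap K)$-module $\bbC_\lambda$, and then set $A_\frq(\lambda) = \caR_\frq^S(\bbC_\lambda)$, where $\caR_\frq^\bullet$ denotes the Zuckerman cohomological induction functor (normalized with the standard $\rho(\fru)$-shift) and $S = \dim(\fru \cap \frk)$ is the middle degree. Vanishing theorems in the admissible range ensure $\caR_\frq^j = 0$ for $j \neq S$ and yield irreducibility of the output.

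Property (ii) is then immediate: the $\rho(\fru)$-shift built into $\caR_\frq^S$ carries the infinitesimal character $\lambda + \rho_\frl$ of $\bbC_\lambda$ to $\lambda + \rho$ on the $\frg$-side. For (i) and (iii), I would invoke the Blattner-type $K$-type multiplicity formula, which expresses each multiplicity as an Euler characteristic of $L \cap K$-cohomology of a Koszul-type complex on $\fru \cap \frp$; in the admissible range this collapses to the top degree and forces every $\frk$-type appearing in $A_\frq(\lambda)$ to have highest weight of the form $\mu(\frq, \lambda) + \sum_{\beta \in \Delta(\fru \cap \frp)} n_\beta \beta$ with $n_\beta \in \bbZ_{\geq 0}$. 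A direct computation at the bottom of this range shows that $\mu(\frq, \lambda) = \lambda|_\frt + 2\rho(\fru \cap \frp)$ occurs with multiplicity exactly one and constitutes the unique minimal $K$-type.

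The main obstacle will be unitarity. Following the original Vogan--Zuckerman argument, I would transport the natural Hermitian form on $\bbC_\lambda$ through cohomological induction to obtain an invariant form on $A_\frq(\lambda)$, and then use Vogan's signature character machinery to prove definiteness; the positivity hypothesis $\langle \alpha, \lambda|_\frt \rangle \geq 0$ for $\alpha \in \Delta(\fru)$ is exactly what is needed to rule out sign changes along the relevant deformation from the tempered case. Uniqueness will then follow from Vogan's lowest $K$-type theorem: an irreducible unitary $(\frg, K)$-module is determined up to isomorphism by its infinitesimal character together with a minimal $K$-type occurring with multiplicity one, so (i) and (ii) pin down $A_\frq(\lambda)$.
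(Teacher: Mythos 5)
The paper states this result with citations to Vogan--Zuckerman \cite{VZ} and Vogan \cite{V2} but gives no proof of its own, so there is no internal argument to compare against; your sketch is essentially the approach of those references, constructing $A_\frq(\lambda)=\caR_\frq^S(\bbC_\lambda)$ and reading off (i)--(iii) from the Euler--characteristic formula for cohomologically induced $K$-types.

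Two steps in your sketch are stated more loosely than the literature allows. First, the vanishing theorem in the admissible range gives $\caR_\frq^j=0$ for $j\neq S$, but irreducibility of $\caR_\frq^S(\bbC_\lambda)$ is not a consequence of vanishing; it is a separate theorem (Vogan's irreducibility theorem in the good/fair range, in [V2] and in Knapp--Vogan), proved by different means, and you should cite it rather than fold it into the vanishing step. Second, your uniqueness argument claims that an irreducible unitary $(\frg,K)$-module is determined by its infinitesimal character together with a minimal $K$-type of multiplicity one; that is not a theorem as stated, since distinct irreducible modules can share both data. The uniqueness asserted in the theorem really uses property (iii) as well: the lowest $K$-type $\mu(\frq,\lambda)$ singles out (via Vogan's lowest-$K$-type calculus) a $\theta$-stable parabolic and a Langlands datum on its Levi, and the combination of the prescribed infinitesimal character and the $K$-type support condition (\ref{eqnkt}) then forces the module to be $\caR_\frq^S(\bbC_\lambda)$. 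With those two points tightened, the proposal matches the standard proof.
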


We denote the Weyl groups for $\Delta(\frl,\frt)$ and $\Delta(\frl,\frh)$ by $W(\frl,\frt)$ and $W(\frl,\frh)$
respectively. Clearly, these are subgroups of $W(\frg,\frt)$, respectively $W(\frg,\frh)$.

\begin{theorem}[Theorem 5.1 \cite{HKP}]
If $\lambda\neq\theta\lambda$, then the Dirac cohomology of $A_\frq(\lambda)$ is zero. If
$\lambda=\theta\lambda$, then the Dirac cohomology of the unitary irreducible $(\frg,K)$-module
$A_\frq(\lambda)$ is:
$$
H_D(A_\frq(\lambda))=\ker D=\bigoplus_{w\in W(\frl,\frt)^1}2^{[l_0/2]}E_{w(\lambda+\rho)-\rho_c}.
$$
\end{theorem}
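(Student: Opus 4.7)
The plan is to combine three ingredients: the unitarity of $A_\frq(\lambda)$, which reduces $H_D$ to $\ker D$; Vogan's conjecture, which constrains the infinitesimal characters of $\tilde K$-types occurring in $H_D$; and the Vogan--Zuckerman description of the $K$-types of $A_\frq(\lambda)$.

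By unitarity, $H_D(A_\frq(\lambda)) = \ker D = \ker D^2$, and \eqref{D^2} shows that a $\tilde K$-type $E_\gamma$ in $A_\frq(\lambda)\otimes S$ lies in $\ker D$ precisely when $\|\gamma + \rho_c\|^2 = \|\lambda + \rho\|^2$, since $A_\frq(\lambda)$ has infinitesimal character $\lambda + \rho$. For the vanishing assertion when $\lambda \neq \theta\lambda$, Vogan's theorem forces any contributing $\gamma$ to satisfy $\gamma + \rho_c = w(\lambda + \rho)$ for some $w \in W$. Since $\rho \in \frt^*$ and $\gamma + \rho_c \in \frt^*$, I would prove as a lemma that this relation forces $\lambda \in \frt^*$, using that $\lambda$ vanishes on $[\frl,\frl]$ (being the differential of a unitary character of $L$) together with the admissibility positivity condition on $\lambda|_\frt$.

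For the case $\lambda = \theta\lambda$, so $\lambda \in \frt^*$: by the Vogan--Zuckerman theorem above, the $K$-types of $A_\frq(\lambda)$ have highest weights of the form $\mu(\frq,\lambda) + \sum_\beta n_\beta\beta$ with $n_\beta \geq 0$ and $\beta \in \Delta(\fru\cap\frp)$, and $\mu(\frq,\lambda) = \lambda + 2\rho(\fru\cap\frp)$ is the unique lowest $K$-type. Combining this with Lemma~\ref{spinhwts}, I would enumerate the $\tilde K$-types in $A_\frq(\lambda)\otimes S$ and use the decompositions $\rho = \rho_L + \rho(\fru)$, $\rho_c = \rho_{c,L} + \rho(\fru\cap\frk)$, and the $W(\frl,\frt)$-invariance of $\rho(\fru)$ (since reflections in roots of $\frl$ fix the grading element defining $\fru$), to verify that the norm equality together with the Vogan constraint $\gamma + \rho_c \in W\cdot(\lambda + \rho)$ are met simultaneously exactly when $\mu = \mu(\frq,\lambda)$ and $\gamma = w(\lambda+\rho) - \rho_c$ for $w \in W(\frl,\frt)^1$. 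The multiplicity $2^{[l_0/2]}$ is then inherited from the spinor decomposition in Lemma~\ref{spinhwts}.

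The step I expect to be the main obstacle is ruling out contributions from the higher $K$-types: one must show that whenever some $n_\beta > 0$, the norm $\|\gamma + \rho_c\|^2$ strictly exceeds $\|\lambda + \rho\|^2$ for every $\tilde K$-type $E_\gamma$ built from $E_\mu$ and $S$. This should follow from the strict positivity $\langle \lambda + \rho, \beta \rangle > 0$ for $\beta \in \Delta(\fru)$ (admissibility together with the structure of $\fru$), combined with careful bookkeeping of the various inner products. A secondary subtlety is the combinatorial identification of the contributing coset representatives with $W(\frl,\frt)^1$, rather than with a larger subset of $W(\frg,\frt)^1$; this rests on the $W(\frl,\frt)$-stability of $\Delta(\fru)$ and on the dominance behaviour of $w(\lambda + \rho) - \rho_c$ under shifts by $\rho(\fru\cap\frp)$.
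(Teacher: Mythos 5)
Your proposal matches the approach of the proof in \cite{HKP}: reduce via unitarity to $\ker D = \ker D^2$, use the $D^2$ formula (1.2) and Vogan's conjecture (or equivalently the equality case of Parthasarathy's Dirac inequality, Theorem 1.3) to constrain the contributing $\tilde K$-types, combine Lemma~\ref{spinhwts} with the Vogan--Zuckerman $K$-type description to enumerate candidates, and then verify that only the lowest $K$-type $\mu(\frq,\lambda)$ together with the appropriate spin components achieves equality. One remark on your vanishing lemma: the cleanest route is to observe that both $\lambda+\rho$ and $\theta\lambda+\rho$ are dominant for $\Delta^+(\frg,\frh)$ (since $\lambda$ and $\theta\lambda$ are both admissible and orthogonal to $\Delta(\frl)$, and $\theta$ preserves $\Delta(\fru)$), so once Vogan's theorem produces $w(\lambda+\rho)\in\frt^*$ one gets $W$-conjugacy of the two dominant elements $\lambda+\rho$ and $\theta\lambda+\rho$, forcing $\lambda=\theta\lambda$; this makes the positivity of admissibility do exactly the work you anticipated.
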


\begin{remark} Dirac cohomology has been calculated for other families of representations
(see [BP1], [BP2], [MP]).
\end{remark}

%%%%%%%%%%%%%%%%%%%%%% Section 3 %%%%%%%%%%%%%%%%%%%%%%%%%%%%%
\section{Dirac cohomology and $(\frg,K)$-cohomology}
%%%%%%%%%%%%%%%%%%%%%%%%%%%%%%%%%%%%%%%%%%%%%%%%%%%%%%%%%%%%%
%
 Let $F$ be an irreducible finite-dimensional $G$-module with highest weight $\lambda$.
By results of Vogan and Zuckerman \cite{VZ}, the irreducible unitary  $(\frg,K)$-modules $X$ such that
$H^*(\frg,K;X\otimes F^*)\neq 0$ are certain $A_{\frq}(\lambda)$ modules with the
same infinitesimal character as $F$.  Moreover, if $X$ is such an $A_{\frq}(\lambda)$ module, then
$$
H^i(\frg,K;X\otimes F^*)=\Hom_{L\cap K}(\textstyle{\bigwedge^{i-\dim(\fru\cap\frp)}}(\frl\cap\frp),\Bbb C),
$$
where $L$ is the Levi subgroup of $G$ corresponding to $\frq$.

Recall that the above $(\frg,K)$-cohomology can be defined as the cohomology of the complex
$$
\Hom^\cdot_K(\textstyle{\bigwedge^\cdot}(\frp),X\otimes F^*),
$$
with differential
$$
df(X_1\wedge\dots\wedge X_k)=\sum_i (-1)^{i-1} X_i\cdot f(X_1\wedge\dots\widehat X_i\dots\wedge X_k).
$$

To show how this is related to our results, let us first show that $(\frg,K)$-cohomology is related to Dirac
cohomology, as stated in the introduction. As we mentioned above, if $(\frg,K)$-cohomology is nonzero, then $X$
must have the same infinitesimal character as $F$. We assume this in the following.

Consider first the case when $\dim\frp$ is even. Then we can write $\frp$ as a direct sum of isotropic subspaces
$U$ and $\bar U\cong U^*$. Then we have the spinor spaces $S=\bigwedge^\cdot U$ and $S^*=\bigwedge^\cdot\bar U$,
and
$$
S\otimes S^*\cong\textstyle{\bigwedge^\cdot}(U\oplus\bar U)=\textstyle{\bigwedge^\cdot}\frp.
$$
It follows that we can identify the $(\frg,K)$-cohomology of $X\otimes F^*$ with
$$
H^*(\Hom^\cdot_{\tilde K}(S\otimes S^*,X\otimes F^*))\cong H^*(\Hom^\cdot_{\tilde K}(F\otimes S,X\otimes S)).
$$
If $X$ is unitary, Wallach has proved that the differential of this complex is 0 (see \cite{Wa1}, Proposition
9.4.3, or \cite{BW}). So taking cohomology can be omitted in the above formula. It follows that
$$
H^*(\frg,K;X\otimes F^*) = \Hom^\cdot_{\tilde K}(H_D(F),H_D(X)).
$$
Namely, the eigenvalues of $D^2$ are non-positive on $F\otimes S$ and nonnegative on $X\otimes S$
(see \cite{Wa1}, 9.4.6). Also, since the infinitesimal characters
of $X$ and $F$ are the same, the eigenvalue of $D^2$ on a $\widetilde K$-type in either of the two variables
depends only on the value of the Casimir element $\Omega_{\frk_\Delta}$ on that $\widetilde K$-type. In
particular, action of $D^2$ on isomorphic $\widetilde K$-types must have the same eigenvalue. It follows
from the Dirac inequality that the same
$\widetilde K$-type can appear in both $F\otimes S$ and $X\otimes S$ only if it is in the kernel of $D^2$ in
each variable, and $\Ker D^2$ is equal to the Dirac cohomology for these cases.

Now we consider the case when $\dim\frp$ is odd.  In this case, $\textstyle{\bigwedge^\cdot}\frp$ is isomorphic
to the direct sum of two copies of $S\otimes S^*$. Therefore, $H^*(\frg,K;X\otimes F^*)$ is isomorphic to the
direct sum of two copies of $\Hom^\cdot_{\tilde K}(H_D(F),H_D(X))$.

If we now use the formulas for $H_D(A_\frq(\lambda))$ and $H_D(F)$ from Section 2, we immediately get
$$
\dim H^*(\frg,K;X\otimes F^*)= 2^{l_0}|W(\frl,\frt)/W(\frl\cap \frk,\frt)|.
$$
This agrees with the results of \cite{VZ}.

%
%%%%%%%%%%%%%%%%%%%%%% Section 4 %%%%%%%%%%%%%%%%%%%%%%%%%%%%%
\section{Dirac cohomology of highest weight modules}
%%%%%%%%%%%%%%%%%%%%%%%%%%%%%%%%%%%%%%%%%%%%%%%%%%%%%%%%%%%%%
%
We describe Dirac cohomology of irreducible highest weight modules.
As mentioned in section 2, Kostant extended Vogan's conjecture
to the setting of the cubic Dirac operator [Ko3]. Fix a Cartan subalgebra
$\frh$ in a Borel subalgebra $\frb$. The category $\caO$ introduced by Bernstein, Gelfand and Gelfand [BGG]
is the category of all $\frg$-modules, which are finitely generated, locally $\frb$-finite
and semisimple under $\frh$-action.  Kostant proved a non-vanishing
result on Dirac cohomology for highest weight modules in the most general
setting.  His theorem implies that for the equal rank case all highest weight modules have non-zero Dirac cohomology. He also determined the Dirac cohomology of finite-dimensional modules
in this case.  The connection of Dirac cohomology of $(\frg,K)$-modules
and that of highest weight modules was studied in [DH1] by using the Jacquet functor.
In [HX] we determined the Dirac cohomology of all irreducible highest weight modules
in terms of Kazhdan-Lusztig polynomials.

We first recall the definition of Kostant's cubic Dirac operator
and the basic properties of the corresponding Dirac cohomology.
Let $\frg$ be a semisimple complex Lie algebra with Killing form $B$.
Let $\frr\subset\frg$ be a reductive Lie subalgebra such that
$B|_{\frr\times \frr}$  is non-degenerate. Let
$\frg=\frr\oplus\frs$ be the orthogonal decomposition with respect
to $B$. Then the restriction
$B|_\frs$ is also non-degenerate. Denote by $C(\frs)$ the Clifford
algebra of $\frs$ with
\begin{equation*}
uu'+u'u=-2B(u, u')
\end{equation*}
for all $u, u'\in\frs$. The above choice of sign is the same as in [HP2], but
different from the definition in [Ko1], as well as in [HPR].  The two different
choices of signs have no essential difference
since  the two bilinear forms are equivalent over $\mathbb{C}$. Now
fix an orthonormal basis $Z_1, \ldots, Z_m$ of $\frs$. Kostant
\cite{Ko1} defines the cubic Dirac operator $D$ by
\begin{equation*}
D=\sum_{i=1}^m{Z_i\otimes Z_i+1\otimes v}\in U(\frg)\otimes C(\frs).
\end{equation*}
Here $v\in C(\frs)$ is the image of the fundamental 3-form
$w\in\bigwedge^3(\frs^*)$,
\begin{equation*}
w(X,Y,Z)=\frac{1}{2}B(X,[Y,Z]),
\end{equation*}
under the Chevalley map $\bigwedge(\frs^*)\rightarrow C(\frs)$ and
the identification of $\frs^*$ with $\frs$ by the Killing form $B$.
Explicitly,
\begin{equation*}
v=\frac{1}{2}\sum_{1\leq i<j<k \leq m}B([Z_i, Z_j],Z_k)Z_iZ_jZ_k.
\end{equation*}

The cubic Dirac operator has a good square in analogue with
the Dirac operator associated with the symmetric pair $(\frg,\frk)$ in Section 2.  We
have  a similar Lie algebra map
\begin{equation*}
\alpha:\frr\rightarrow C(\frs)
\end{equation*}
which is defined by the adjoint map $\ad:\frr\rightarrow\frso(\frs)$
composed with the embedding of $\frso(\frs)$ into $C(\frs)$ using
the identification $\frso(\frs)\simeq\bigwedge^2\frs$. The explicit
formula for $\alpha$ is (see \S 2.3.3 [HP2])
\begin{equation}\label{mapalpha}\alpha(X)=-\frac{1}{4}\sum_{j}[X, Z_j]Z_j, \quad\
X\in\frr.
\end{equation}
Using $\alpha$ we can embed the Lie algebra $\frr$ diagonally into
$U(\frg)\otimes C(\frs)$, by
\begin{equation*}\label{Delta map}
X\mapsto X_\Delta=X\otimes1+1\otimes\alpha(X).
\end{equation*}
This embedding extends to $U(\frr)$. We denote the image of $\frr$
by $\frr_\Delta$, and then the image of $U(\frr)$ is the enveloping
algebra $U(\frr_\Delta)$ of $\frr_\Delta$. Let $\Omega_\frg$ (resp.
$\Omega_\frr$) be the Casimir elements for $\frg$ (resp. $\frr$).
The image of $\Omega_\frr$ under $\Delta$ is denoted by
$\Omega_{\frr_\Delta}$.

Let $\frh_\frr$ be a Cartan subalgebra of $\frr$ which is contained
in $\frh$. It follows from Kostant's calculation (\cite{Ko1}, Theorem 2.16) that
\begin{equation}\label{square}
D^2=-\Omega_\frg\otimes1+\Omega_{\frr_\Delta}-(\|\rho\|^2 - \|\rho_{\frr}\|^2)1\otimes1,
\end{equation}
where $\rho_\frr$ denote the half sum of positive roots for $(\frr,
\frh_\frr)$.  We also note the sign difference with Kostant's formula
due to our choice of bilinear form for the definition of the Clifford algebra $C(\frs)$.

We denote by $W$ the Weyl group associated to the root system
$\Delta(\frg,\frh)$ and $W_\frr$ the Weyl group associated to the root system
$\Delta(\frr,\frh_\frr)$. The following theorem due to Kostant is an
extension of Vogan's conjecture on the symmetric pair case which
is proved in [HP1].
(See \cite{Ko3} Theorems 4.1 and 4.2 or \cite{HP2} Theorem 4.1.4).

\begin{theorem}\label{Vogan's conjecture1}
There is an algebra homomorphism $\zeta:
Z(\frg)\rightarrow Z(\frr)\cong Z(\frr_{\Delta})$ such that for any $z\in Z(\frg)$ one has
\begin{equation*}\label{Vogan's equation}
z\otimes 1-\zeta(z)=Da+aD \text{\ for some\ }a\in U(\frg)\otimes C(\frs).
\end{equation*}
Moreover, $\zeta$ is determined
by the following commutative diagram:
\begin{equation*}\label{Vogan's diagram}
\CD
  Z(\mathfrak{g}) @> \zeta >> Z(\mathfrak{r}) \\
  @V \eta VV @V \eta_{\mathfrak{r}} VV  \\
  P(\mathfrak{h}^*)^{W} @>\mathrm{Res}>>
  P(\mathfrak{h}_\frr^*)^{W_{\mathfrak{r}}}.
\endCD
\end{equation*}
Here the vertical maps $\eta$ and $\eta_\frr$ are Harish-Chandra
isomorphisms.
\end{theorem}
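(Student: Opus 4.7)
The plan is to mimic the strategy of \cite{HP1} for the symmetric pair setting, now with $\frk$ replaced by the reductive subalgebra $\frr$ and the ordinary Dirac operator replaced by Kostant's cubic Dirac operator. The central mechanism is to introduce, on the $\bbZ/2$-graded algebra $A = (U(\frg)\otimes C(\frs))^{\frr_\Delta}$ of $\frr_\Delta$-invariants, the odd derivation
\begin{equation*}
d(x) = Dx - (-1)^{|x|} xD,
\end{equation*}
and to show that $d^2 = 0$ and that $Z(\frg)\otimes 1$ consists of cocycles that project isomorphically onto $H(A,d) \cong Z(\frr_\Delta)$. The homomorphism $\zeta$ is then obtained by sending $z\otimes 1$ to the unique element of $Z(\frr_\Delta)$ representing its cohomology class modulo $\im d$, which has exactly the required form $Da+aD$.

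First I would verify that $D$ is $\frr_\Delta$-invariant and hence preserves $A$, so that $d$ is defined, and then that $d^2 = \ad(D^2)$ acts trivially on $A$. Here Kostant's formula \eqref{square}, $D^2 = -\Omega_\frg\otimes 1 + \Omega_{\frr_\Delta} + c\cdot 1\otimes 1$, is decisive: $\Omega_\frg\otimes 1$ is central in $U(\frg)\otimes C(\frs)$, and $\Omega_{\frr_\Delta}$ lies in $Z(\frr_\Delta) \subseteq A$, so $D^2$ is central in $A$ and $d^2 = 0$. Moreover, every $z\otimes 1$ with $z\in Z(\frg)$ supercommutes with $D$ (since $z\otimes 1$ is central in $U(\frg)\otimes C(\frs)$), so $Z(\frg)\otimes 1 \subset \ker d$, providing a well-defined map $Z(\frg) \to H(A,d)$.

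The technical heart is the identification $H(A,d) \cong Z(\frr_\Delta)$. I would introduce the tensor-product filtration coming from the standard PBW filtration on $U(\frg)$ and the Chevalley filtration on $C(\frs)$, pass to the associated graded, and identify $\gr A$ with an invariant polynomial-exterior algebra on which $\gr d$ becomes a Koszul-type differential on $S(\frg)\otimes \bigwedge \frs$ (the cubic term drops out in $\gr$). A standard Koszul-homology argument, combined with a separation-of-variables decomposition along $\frr\oplus\frs$, then identifies the $\frr_\Delta$-invariant cohomology with $(S(\frr)^\frr)_\Delta \cong Z(\frr_\Delta)$; a spectral sequence or direct filtration argument lifts this to $A$. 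This step will be the main obstacle, because one must handle the non-reductive adjoint action carefully and verify that no higher differentials intervene; it is exactly here that \cite{Ko3} makes its substantial technical input, generalizing the symmetric case of \cite{HP1}.

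Finally, to pin down $\zeta$ by the stated commutative diagram, I would evaluate on a highest weight module. For a Verma module $M_\lambda$ of highest weight $\lambda$, one computes directly that the highest weight line in $M_\lambda\otimes S_\frs$ (with respect to compatible choices of positive systems) lies in $\ker D$ and is a weight vector of weight $\lambda + \rho - \rho_\frr$ for $\frh_\frr$. On the one hand $z\otimes 1$ acts by $\eta(z)(\lambda+\rho)$; on the other hand, modulo $\im d$, the action is $\eta_\frr(\zeta(z))(\lambda + \rho - \rho_\frr + \rho_\frr) = \eta_\frr(\zeta(z))(\lambda+\rho)$. Matching the two for all $\lambda$ forces $\eta_\frr\circ\zeta = \mathrm{Res}\circ\eta$, which is precisely the asserted diagram.
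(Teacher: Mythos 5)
The paper does not contain a proof of this theorem: it is a survey article that cites Kostant \cite{Ko3} (Theorems 4.1, 4.2) and \cite{HP2} (Theorem 4.1.4) for the result, and those sources follow precisely the strategy you sketch---introduce the odd derivation $d(x)=Dx-(-1)^{|x|}xD$ on $A=(U(\frg)\otimes C(\frs))^{\frr_\Delta}$, use Kostant's formula \eqref{square} to get $d^2=\mathrm{ad}(D^2)=0$ on $A$, compute $H(A,d)\cong Z(\frr_\Delta)$ by a filtration reduction, and determine $\zeta$ by acting on highest weight modules and comparing Harish-Chandra scalars. Your overall plan is therefore the right one.

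There is, however, a concrete error in the filtration step, and it is not cosmetic. With the tensor-product filtration you propose (PBW degree on $U(\frg)$, Clifford degree on $C(\frs)$), the quadratic term $\sum Z_i\otimes Z_i$ of $D$ has total degree $1+1=2$ while the cubic term $1\otimes v$ has degree $0+3=3$. The cubic term therefore \emph{dominates} the leading symbol rather than dropping out, so $\gr d$ would be the super-Poisson bracket with $1\otimes v$ --- not the Koszul differential your argument needs. The filtration actually used in \cite{HP1} and \cite{HP2} places all of $C(\frs)$ in degree $0$, so that only the $U(\frg)$-degree counts: then $\sum Z_i\otimes Z_i$ has degree $1$, $1\otimes v$ has degree $0$, the cubic term is genuinely subordinate, $\gr(U(\frg)\otimes C(\frs))\cong S(\frg)\otimes C(\frs)$, and $\gr d$ is the Koszul-type contraction differential. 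With that correction the remainder of your sketch --- the separation of variables along $\frg=\frr\oplus\frs$, identification of the $\frr$-invariant cohomology with $S(\frr)^\frr$ and lifting to $Z(\frr_\Delta)$, and the Verma-module evaluation (where the highest weight line of $M_\lambda\otimes S$ is $D$-annihilated because $D^2$ acts by $0$ there, the one-dimensional weight space forces $Da$ to vanish on it when $a\in A$, and matching $\eta(z)(\lambda+\rho)$ with $\eta_\frr(\zeta(z))(\lambda+\rho)$ for $\lambda+\rho$ Zariski-dense gives the diagram) --- reproduces the cited proof.
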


\begin{definition} Let $S$ be a spin module of $C(\frs)$.
Consider the action of $D$ on $V\otimes S$
\begin{equation}\label{Dirac map}
D:V\otimes S\rightarrow V\otimes S
\end{equation}
with $\frg$ acting on $V$ and $C(\frs)$ on $S$.  The {\it Dirac
cohomology} of $V$ is defined to be the $\frr$-module
\begin{equation*}
H_D(V):=\Ker D/\Ker D\cap \Im D.
\end{equation*}
\end{definition}

The following theorem is a consequence of the above theorem.
\begin{theorem}[\cite{Ko3},\cite{HP2}]
Let $V$ be a $\frg$-module with
$Z(\frg)$ infinitesimal character $\chi_\Lambda$. Suppose that an
$\frr$-module $N$ is contained in the Dirac cohomology
$H_D(V)$ and has $Z(\frr)$ infinitesimal
character $\chi_\lambda$ . Then $\lambda=w\Lambda$ for some $w\in W$.
\end{theorem}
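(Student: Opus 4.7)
The plan is to use Theorem \ref{Vogan's conjecture1} to show that, on $H_D(V)$, the operators $z \otimes 1$ and $\zeta(z)$ induce the same endomorphism for every $z \in Z(\frg)$, and then read off the required $W$-conjugacy from the commutative diagram defining $\zeta$.

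First I would verify that both $z \otimes 1$ and $\zeta(z)$ commute with $D$. The commutator $[D, z \otimes 1] = \sum_i [Z_i, z] \otimes Z_i$ vanishes because $z$ is central in $U(\frg)$; the commutator $[D, \zeta(z)]$ vanishes because $D$ is $\frr_\Delta$-invariant by construction, hence commutes with all of $U(\frr_\Delta)$, and $\zeta(z) \in Z(\frr_\Delta) \subset U(\frr_\Delta)$. Consequently both operators preserve $\Ker D$ and $\Im D$, and so descend to well-defined endomorphisms of $H_D(V)$.

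Next I would check that $Da + aD$ is the zero endomorphism on $H_D(V)$. For $v \in \Ker D$ one has $aD(v) = 0$, so the identity $Da + aD = z \otimes 1 - \zeta(z)$ gives $Da(v) = (z \otimes 1 - \zeta(z))(v)$; this lies in $\Ker D$ by the commutations above and simultaneously in $\Im D$ tautologically. Thus $Da(v) \in \Ker D \cap \Im D$ represents zero in $H_D(V)$, so $z \otimes 1$ and $\zeta(z)$ induce the same operator on $H_D(V)$, in particular on the $\frr$-submodule $N$.

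Finally I would identify the two resulting scalars and conclude. Since $V$ has $Z(\frg)$-infinitesimal character $\chi_\Lambda$, $z \otimes 1$ acts on $H_D(V)$ by $\eta(z)(\Lambda)$, where $\eta(z) \in P(\frh^*)^W$ is the Harish-Chandra image of $z$. Since $N$ has $Z(\frr)$-infinitesimal character $\chi_\lambda$, the element $\zeta(z)$ acts on $N$ by $\eta_\frr(\zeta(z))(\lambda)$, which by the commutative diagram of Theorem \ref{Vogan's conjecture1} equals $\eta(z)(\lambda)$ (with $\lambda$ viewed in $\frh_\frr^* \subset \frh^*$). Equating the two scalars gives $\eta(z)(\Lambda) = \eta(z)(\lambda)$ for every $z \in Z(\frg)$. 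Since $\eta$ maps $Z(\frg)$ onto the full ring $P(\frh^*)^W$ and $W$-invariant polynomials separate $W$-orbits on $\frh^*$, this forces $\lambda = w\Lambda$ for some $w \in W$. The only delicate point, rather than a genuine obstacle, is showing that $Da$ by itself preserves $\Ker D$; this uses precisely the rewriting $Da = (z \otimes 1 - \zeta(z)) - aD$ together with the two commutations verified in the first step.
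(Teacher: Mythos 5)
The paper records this theorem as a direct consequence of Theorem \ref{Vogan's conjecture1} and cites \cite{Ko3} and \cite{HP2} without reproving it; your argument is precisely the standard deduction found in those references (and in \cite{HP1} for the symmetric-pair case). It is correct in all details: the two commutations $[D,z\otimes 1]=0$ and $[D,\zeta(z)]=0$ make the operators descend to $H_D(V)$, the identity $Da(v)=(z\otimes 1-\zeta(z))v\in\Ker D\cap\Im D$ kills the error term, and the commutative diagram plus the fact that $W$-invariant polynomials separate $W$-orbits give the conclusion.
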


Suppose that $V_\lambda$ is a finite-dimensional representation
with highest weight $\lambda\in \frh^*$.  Kostant [K2] calculated
the Dirac cohomology of $V_\lambda$ with respect to any equal rank
quadratic subalgebra $\frr$ of $\frg$.  Assume that $\frh\subset\frr\subset\frg$ is
the Cartan subalgebra for both $\frr$ and $\frg$.
Define $W(\frg,\frh)^1$ to be the subset of the Weyl group $W(\frg,\frh)$ by
$$
W(\frg,\frh)^1=\{w\in W(\frg,\frh)\ | w(\rho) \text{ is }\Delta^+(\frr,\frh)-\text{dominant}
\}.
$$
This is the same as the subset of elements $w\in W(\frg,\frh)$ that map the positive Weyl $\frg$-chamber
into the positive $\frr$-chamber.  There is a bijection
$W(\frr,\frh)\times W(\frg,\frh)^1\rightarrow W(\frg,\frh)$ given by $(w,\tau)\mapsto w\tau$.
Kostant proved [K2] that
$$
H_D(V_\lambda)=\bigoplus_{w\in W(\frg,\frh)^1}E_{w(\lambda+\rho)-\rho_\frr}.
$$

The above result of Kostant on Dirac cohomology of
finite-dimensional modules has been extended to unequal rank case by Mehdi and Zierau [MZ].
We now show how to calculate the Dirac cohomology of a simple highest
weight module of possibly infinite dimension.
We need to recall the definition
and some of the basic properties of the category $\caO^\frq$
associated with an arbitrary parabolic subalgebra $\frq$ of $\frg$.

Recall that if  $\frg$ is  a complex semisimple Lie algebra with  a
Cartan subalgebra of $\frh$, we denote by $\Phi=\Delta(\frg,\frh)\subseteq\frh^*$ the
root system of ($\frg$, $\frh$). For $\alpha\in\Phi$, let
$\frg_\alpha$ be the root subspace of $\frg$ corresponding to
$\alpha$. We fix a choice of the set of positive roots $\Phi^+$ and let
$\Delta$ be the corresponding subset of simple roots in $\Phi^+$.
Note that each subset $I\subset\Delta$ generates a root system
$\Phi_I\subset\Phi$, with positive roots $\Phi_I^+=\Phi_I\cap
\Phi^+$.

The parabolic subalgebras of $\frg$ up to conjugation
are in one to one correspondence with the subsets in $\Delta$.
We let
$$\frl_I=\frh\oplus \sum_{\alpha\in\Phi_I}{\frg_\alpha}$$
be the Levi subalgebra and
$$\fru_I=\sum_{\alpha\in\Phi^+\backslash\Phi_I^+}{\frg_\alpha}, \
\bar{\fru}_I=\sum_{\alpha\in\Phi^+\backslash\Phi_I^+}{\frg_{-\alpha}}$$ be
the nilpotent radical and its dual space with respect to the Killing
form $B$. Then $\frq_I=\frl_I\oplus\fru_I$ is the standard parabolic
subalgebra associated with $I$. We set
$$\rho=\rho(\frg)=\frac{1}{2}\sum_{\alpha\in\Phi^+}{\alpha}, \
\rho(\frl_I)=\frac{1}{2}\sum_{\alpha\in\Phi_I^+}{\alpha}, \text{\
and \ }
\rho(\fru_I)=\frac{1}{2}\sum_{\alpha\in\Phi^+\backslash\Phi_I^+}{\alpha}.$$
Then we have $\rho(\bar{\fru}_I)=-\rho(\fru_I)$. We note that once
$I$ is fixed there is little use for other subsets of $\Delta$.
We will omit the subscript if a subalgebra  is
clearly associated with $I$.

\begin{definition}
The category $\mathcal{O}^\frq$ is defined to be the full
subcategory of $U(\frg)$-modules $M$ that satisfy the following
conditions:
\begin{itemize}
\item[(i)] $M$ is a finitely generated $U(\frg)$-module;

\item[(ii)]  $M$ is a direct sum of finite dimensional simple
$U(\frl)$-modules;

\item[(iii)] $M$ is locally finite as a $U(\frq)$-module.
\end{itemize}
\end{definition}

We adopt notation of \cite{Hu}. Let $\Lambda_I^+$ be the set of
$\Phi_I^+$-dominant integral weights in $\frh^*$, namely,
\begin{equation*}
\Lambda_I^+ :=\{\lambda\in\frh^*\ |\ \langle\lambda,
\alpha^\vee\rangle\in\mathbb{Z}^{\geq0}\ \mbox{for all}\
\alpha\in\Phi_I^+\}.
\end{equation*}
Here $\langle, \rangle$ is the bilinear form on $\frh^*$ (induced
from the Killing form $B$) and $\alpha^\vee=2\alpha/\langle\alpha,
\alpha\rangle$.

Let $F(\lambda)$ be the finite-dimensional simple $\frl$-module with
highest weight $\lambda$. Then $\lambda\in\Lambda_I^+$. We consider
$F(\lambda)$ as a $\frq$-module by letting $\fru$ act trivially on
it. Then the {\it parabolic Verma module} with highest weight
$\lambda$ is the induced module
\begin{equation*}
M_I(\lambda)=U(\frg)\otimes_{U(\frq)}F(\lambda).
\end{equation*}
The module $M_I(\lambda)$ is a quotient of the ordinary Verma module
$M(\lambda)$. Using Theorem 1.2 in \cite{Hu}, we can write
unambiguously $L(\lambda)$ for the unique simple quotient of
$M_I(\lambda)$ and $M(\lambda)$. Furthermore, since every nonzero
module in $\mathcal{O}^\frq$ has at least one nonzero vector of
maximal weight, Proposition 9.3 in \cite{Hu} implies that every
simple module in $\mathcal{O}^\frq$ is isomorphic to
$L(\lambda)$ for some $\lambda\in\Lambda_I^+$ and is therefore
determined uniquely up to isomorphism by its highest weight.

Recall that $M_I(\lambda)$
and all its subquotients including $L(\lambda)$ have the same
infinitesimal character
$$\chi_\lambda\colon Z(\frg) \rightarrow \bbC.$$  Here  $\chi_\lambda$ is an
algebra homomorphism  such that
$z\cdot v=\chi_\lambda(z)v$ for all $z\in Z(\frg)$ and all $v\in
M(\lambda)$.  We note that the standard parameter for the infinitesimal
character $\chi_\lambda$ is the Weyl group orbit of $\lambda+\rho\in \frh^*$ due to the $\rho$-shift in
the Harish-Chandra isomorphism $Z(\frg)\rightarrow S(\frh)^W$.

It follows from  Corollary 1.2 in \cite{Hu} that every nonzero
module $M\in\mathcal{O}^\frq$ has a finite filtration with nonzero
quotients each of which is a highest weight module in
$\mathcal{O}^\frq$. Then the action of $Z(\frg)$ on $M$ is finite.
We set
\begin{equation*}
M^\chi:=\{v\in M\ |\ (z-\chi(z))^nv=0 ~~\mbox{for some}~~ n>0
~~\mbox{depending on}~~ z\}.
\end{equation*}
Then $z-\chi(z)$ acts locally nilpotently on $M^\chi$ for all $z\in
Z(\frg)$ and $M^\chi$ is a $U(\frg)$-submodule of $M$.
 Let $\mathcal{O}^\frq_\chi$
denote the full subcategory of $\mathcal{O}^\frq$ whose objects are
the modules $M$ for which $M=M^\chi$. By the above discussion we
have the following direct sum decomposition:
\begin{equation*}
\mathcal{O}^\frq=\bigoplus_{\chi}
\mathcal{O}^\frq_\chi,
\end{equation*}
where $\chi$ is of the form
$\chi=\chi_\lambda$ for some $\lambda\in\frh^*$.

Let $W$ be the Weyl group associated to the root system $\Phi$. We
define the dot action of $W$ on $\frh^*$ by
$w\cdot\lambda=w(\lambda+\rho)-\rho$ for $\lambda\in\frh^*$. Then
$\chi_\lambda=\chi_\mu$ if and only if $\lambda\in W\cdot\mu$ by the
Harish-Chandra isomorphism $Z(\frg)\rightarrow S(\frh)^W$. An
element $\lambda\in \frh^*$ is called {\it regular} if the isotropy
group of $\lambda$ in $W$ is trivial. In other words, $\lambda$ is
regular if $\langle\lambda+\rho, \alpha^\vee\rangle\neq 0$ for all
$\alpha\in\Phi$.  A non-regular element in $\frh^*$ will be called
{\it singular}.

Denote by $\Gamma$ the set of all $\mathbb{Z}^{\geq0}$-linear
combinations of simple roots in $\Delta$. Let $\mathcal {X}$ be the
additive group of functions $f: \frh^*\rightarrow \mathbb{Z}$ whose
support lies in a finite union of sets of the form $\lambda-\Gamma$
for $\lambda\in\frh^*$. Define the convolution product on $\mathcal
{X}$ by
\begin{equation*}
(f*g)(\lambda):=\sum_{\mu+\nu=\lambda}f(\mu)g(\nu).
\end{equation*}
We regard $e(\lambda)$ as a function in $\mathcal{X}$ which takes
value $1$ at $\lambda$ and value $0$ at $\mu\neq\lambda$. Then
$e(\lambda)*e(\mu)=e(\lambda+\mu)$. It is clear that $\mathcal{X}$
is a commutative ring under convolution, with $e(0)$ as its
multiplicative identity. Let
\begin{equation*}
M_\lambda:=\{v\in M\ |\ h\cdot v=\lambda(h)v\ \mbox{for all}\
h\in\frh\}.
\end{equation*}
We say that a weight module (semisimple $\frh$-module) $M$ {\it has a
character} if
\begin{equation}\label{character}
\ch M:=\sum_{\lambda\in\frh^*}\dim M_\lambda\ e(\lambda)
\end{equation}
is contained in $\mathcal{X}$. In this case, $\ch M$ is called the
{\it formal character} of $M$. Notice that all the modules in
$\mathcal{O}^\frq$ have characters, as do all the finite dimensional
semisimple $\frh$-modules. In particular, if $M$ has a character and $\dim
L<\infty$, then $M\otimes L$ has a character
$$\ch(M\otimes L)=\ch M*\ch L.$$
In addition, for semisimple $\frh$-modules which have characters, their
direct sums, submodules and quotients also have characters.

As a consequence of established Vogan's conjecture for
the cubic Dirac operators we have the following proposition (See also [DH1] Theorem 4.3).

\begin{prop} Suppose that $V$ is in $\caO^\frq_{\chi_\mu}$.  Then the Dirac cohomology
$H_D(V)$ is a completely reducible finite-dimensional $\frl$-module.
Moreover, if the finite-dimensional $\frl$-module $F(\lambda)$ is contained in
$H_D(V)$, then
$\lambda+\rho_\frl=w(\mu+\rho)$ for some $w\in W$.
\end{prop}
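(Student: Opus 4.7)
The plan is to deduce both assertions from Theorem \ref{Vogan's conjecture1} together with the basic structure of the category $\caO^\frq$.

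For complete reducibility, I would first note that condition (ii) in the definition of $\caO^\frq$ forces $V$ to decompose as a (possibly infinite) direct sum of finite-dimensional simple $\frl$-modules $V_\nu$. Since $S$ is finite-dimensional and $\frl$ is reductive, each $V_\nu \otimes S$ is a completely reducible finite-dimensional $\frl$-module under the diagonal embedding $X \mapsto X\otimes 1+1\otimes\alpha(X)$, so $V\otimes S$ is completely reducible as an $\frl$-module. Because $D$ lies in the $\frr$-invariants of $U(\frg)\otimes C(\frs)$, it commutes with $\frl_\Delta$; hence $\Ker D$ and $\Ker D\cap \Im D$ are both $\frl$-stable, and $H_D(V)$ is a subquotient of a completely reducible $\frl$-module, hence itself completely reducible.

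Next, I would pin down the infinitesimal character by applying Theorem \ref{Vogan's conjecture1} to each $z\in Z(\frg)$: the identity $z\otimes 1-\zeta(z)=Da+aD$ holds in $U(\frg)\otimes C(\frs)$. Both $z\otimes 1$ and $\zeta(z)$ commute with $D$, so they preserve $\Ker D$; and for $v\in\Ker D$ we have $(z\otimes 1-\zeta(z))v=D(a(v))\in\Ker D\cap\Im D$. Therefore $z\otimes 1$ and $\zeta(z)$ induce the same operator on $H_D(V)$. On a simple $\frl$-submodule $F(\lambda)\subset H_D(V)$, the element $\zeta(z)\in Z(\frl)$ acts by the scalar $\eta(z)(\lambda+\rho_\frl)$, thanks to the commutative diagram of Theorem \ref{Vogan's conjecture1} (noting $\frh_\frl=\frh$ since $\frh\subset\frl$). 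On the other hand, $V\in\caO^\frq_{\chi_\mu}$ means that $z-\chi_\mu(z)$ acts locally nilpotently on $V$, hence $z\otimes 1-\eta(z)(\mu+\rho)$ acts locally nilpotently on $V\otimes S$, and in particular nilpotently on the finite-dimensional subspace $F(\lambda)$. The induced operator $\zeta(z)-\eta(z)(\mu+\rho)$ on $F(\lambda)$ is then simultaneously scalar and nilpotent, so it vanishes. Since $z\in Z(\frg)$ is arbitrary, $\eta(z)(\lambda+\rho_\frl)=\eta(z)(\mu+\rho)$ for all $z$, which forces $\lambda+\rho_\frl=w(\mu+\rho)$ for some $w\in W$.

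Finite-dimensionality then follows immediately: only finitely many highest weights can occur in $H_D(V)$, namely those in $\{w(\mu+\rho)-\rho_\frl : w\in W\}$, and for each such $\lambda$ the multiplicity of $F(\lambda)$ in $H_D(V)$ is bounded by $\dim(V\otimes S)_\lambda$, which is finite because modules in $\caO^\frq$ have finite-dimensional $\frh$-weight spaces (a standard consequence of finite generation) and $S$ is finite-dimensional. I expect the main technical point to be the careful passage in the middle paragraph: verifying that $z\otimes 1$ and $\zeta(z)$ descend to the \emph{same} operator on the quotient $H_D(V)$, and then promoting the comparison of the genuinely scalar action of $\zeta(z)$ against the merely generalized-eigenvalue action of $z\otimes 1$ on the finite-dimensional $F(\lambda)$ to an exact equality rather than an equality up to nilpotents.
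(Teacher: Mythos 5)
Your proof is correct and follows essentially the approach the paper indicates: the paper states the proposition ``as a consequence of established Vogan's conjecture for the cubic Dirac operators'' (Theorem~\ref{Vogan's conjecture1}) and defers details to [DH1, Theorem~4.3], and your argument is exactly the natural unpacking of that assertion. One point worth noting is that you correctly handle the fact that $V\in\caO^\frq_{\chi_\mu}$ only gives a \emph{generalized} infinitesimal character (local nilpotence of $z-\chi_\mu(z)$), not a genuine one as hypothesized in the paper's Theorem~4.5; the ``scalar plus nilpotent equals zero'' step is precisely what bridges that gap.
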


It is shown in [HX] that determination of
$H_D(L(\lambda))$ is equivalent to determination of
$\ch L(\lambda)$ in terms of  $\ch M_I(\mu)$, which is
solved by the Kazhdan-Lusztig algorithm.  Namely, if
$$\ch L(\lambda)=\sum (-1)^{\epsilon(\lambda,\mu)}m(\lambda,\mu)\ch
M_I(\mu),$$
then we have
$$H_D(L(\lambda))=\bigoplus
m(\lambda,\mu)F(\mu)\otimes\mathbb{C}_{\rho(\fru)}.$$
By using the known results on Kazhdan-Lusztig polynomials we can
calculate explicitly the Dirac cohomology of all irreducible highest weight modules.
We recall the main result from [HX] here.
Recall that $W=W(\frg,\frh)$ is the Weyl group
associated to the root system $\Phi$.
We define
\begin{equation*}
\Phi_{[\lambda]} :=\{\alpha\in\Phi\ |\ \langle\lambda,
\alpha^\vee\rangle\in \mathbb{Z}\}.
\end{equation*}
Then it is the root system of integral roots associated to $\lambda$.
We also set
\begin{equation*}
W_{[\lambda]} :=\{w\in W\ |\ w\lambda-\lambda\in\Lambda_r \},
\end{equation*}
where $\Lambda_r$ is the $\mathbb{Z}$-span of $\Phi$. Then $W_I$ is
contained in $W_{[\lambda]}$. We also define
\begin{equation*}
W^I=\{w\in W_{[\lambda]}\ |\ w<s_\alpha w\ \mbox{for all $\alpha\in
I$}\},
\end{equation*}
where the ordering on $W$ is given by the Bruhat ordering. Denote by
$\Delta_{[\lambda]}$ the simple system corresponding to the positive
system $\Phi_{[\lambda]}\cap\Phi^+$ in $\Phi_{[\lambda]}$.
Let $\mu$ be the unique
anti-dominant weight in $W_{[\lambda]}\cdot\lambda$. The set
of singular simple roots in $\Delta_{[\lambda]}$ is defined by
\begin{equation*}
J=\{\alpha\in\Delta_{[\lambda]}\ |\ \langle\mu+\rho,
\alpha^\vee\rangle=0\}.
\end{equation*}
Then $W_J=\{w\in W\ |\ w(\mu+\rho)=\mu+\rho\}\subset W_{[\lambda]}$
is the isotropy group of $\mu$. Put
\begin{equation*}
{}^JW^I=\{w\in W^I\ |\ w<ws_\alpha\ \mbox{and}\ ws_\alpha\in
W^I~~\mbox{for all}~~\alpha\in J\}.
\end{equation*}

Following Boe and Hunziker [BH] we define
\begin{equation*}
{}^JP_{x,w}^I(q)=\sum_{i\geq
0}{q^{\frac{l(w)-l(x)-i}{2}}}\dim\Ext_{\mathcal{O}^\frp}^i(M_x,
L_w), \text{\ for all\ }x, w\in {}^JW^I.
\end{equation*}
It is shown to be a polynomial and is called
the relative {\it Kazhdan-Lusztig-Vogan polynomial}.

\begin{theorem} [Theorem 6.16 \cite{HX}]  If $L(\lambda)$ is
the simple highest weight module in $\mathcal{O}^\frp_\mu$ of weight
$\lambda=w_Iw\cdot\mu$ with $w_I$ the longest element in $W_I$, then
one has an $\frl$-module decomposition
\begin{equation*}
H_D(L(\lambda))\simeq\bigoplus_{x\in{}^{J}W^I}{}^{J}P_{x,w}^{I}(1)F({w_{I}x\cdot\mu+\rho(\fru)}).
\end{equation*}
\end{theorem}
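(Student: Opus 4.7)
The strategy is to combine the parabolic Kazhdan–Lusztig–Vogan character formula with a Dirac Euler characteristic argument, following the template sketched in the discussion preceding the theorem. In the Grothendieck group of $\mathcal{O}^\frq_{\chi_\mu}$ one has
\begin{equation*}
[L(w_I w\cdot\mu)] \;=\; \sum_{x\in {}^J W^I}(-1)^{\ell(w)-\ell(x)}\;{}^J P_{x,w}^{I}(1)\;[M_I(w_I x\cdot\mu)],
\end{equation*}
and since Dirac cohomology passes to an additive invariant on this Grothendieck group (via the Dirac index), it suffices to compute $H_D$ on each parabolic Verma module and then assemble the answer.

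The first step is to show $H_D(M_I(\nu))\cong F(\nu+\rho(\fru))$ placed in a single $\bbZ/2$-parity. For Kostant's cubic Dirac operator associated with the reductive pair $(\frg,\frl)$ and complement $\frs=\fru\oplus\bar\fru$, one has an $\frl$-isomorphism $M_I(\nu)\otimes S \cong U(\bar\fru)\otimes F(\nu)\otimes\bigwedge^{\bullet}\bar\fru\otimes\bbC_{-\rho(\fru)}$; up to the standard $\rho$-twist, the cubic Dirac operator restricts to the Koszul differential for $U(\bar\fru)\otimes\bigwedge^{\bullet}\bar\fru$, whose cohomology collapses to $\bbC$ in top degree. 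Tensoring with $F(\nu)$ and the $\rho(\fru)$-twist gives the claimed $\frl$-type, and Theorem~4.5 (Vogan's conjecture for the cubic Dirac operator) rules out any other $\frl$-type sharing the correct infinitesimal character.

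The second step is to apply the Dirac index $I_D(V)=[H_D^{+}(V)]-[H_D^{-}(V)]$ to the displayed Grothendieck group identity. Using Step 1, one obtains
\begin{equation*}
I_D(L(w_I w\cdot\mu)) \;=\; \sum_{x\in {}^J W^I}(-1)^{\ell(w)-\ell(x)}\,\varepsilon(x)\,{}^J P_{x,w}^{I}(1)\,F(w_I x\cdot\mu+\rho(\fru)),
\end{equation*}
where $\varepsilon(x)\in\{\pm 1\}$ records the $\bbZ/2$-parity in which $F(w_I x\cdot\mu+\rho(\fru))$ appears inside $H_D(M_I(w_I x\cdot\mu))$. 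A direct parity count (via the length of $w_I x$ and the dimension of $\fru\cap\bar\fru$-weight contributions to the spin module) identifies $\varepsilon(x)=(-1)^{\ell(w_I)-\ell(x)}$ up to an overall constant, so the alternating signs in the Kazhdan–Lusztig–Vogan formula cancel against $\varepsilon(x)$, leaving a sum with all positive coefficients ${}^J P_{x,w}^{I}(1)$. Finally, Vogan's conjecture together with the fact that distinct $x\in {}^JW^I$ produce distinct $\frl$-highest weights $w_I x\cdot\mu+\rho(\fru)$ shows no $\frl$-type can appear in both parities simultaneously, hence $I_D(L(w_I w\cdot\mu))=H_D(L(w_I w\cdot\mu))$ as $\frl$-modules, which is the desired decomposition.

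The hard part will be the parity bookkeeping in Step 3: one must verify that the $\bbZ/2$-grading on the spin module $S$ interacts with the Bruhat length function on ${}^JW^I$ exactly so that the signs cancel. The cleanest route, which I would try first, is to reduce to Boe–Hunziker's version of the parabolic BGG-type resolution when available, so that the parity of $\frl$-type contributions is dictated by homological degree; otherwise, one can argue directly by tracking how $w_I x\cdot\mu$ sits among the $W(\frg,\frh)^{1}$-shifted weights of $F(\nu)\otimes S$ and using the uniqueness statement in Kostant's Theorem on $H_D$ of finite-dimensional modules recalled earlier in this section.
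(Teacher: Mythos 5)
Your proposal correctly identifies that the Dirac index of a parabolic Verma module $M_I(\nu)$ is $\pm F(\nu+\rho(\fru))$ (the character computation $\ch M_I(\nu)\cdot(\ch S^+ - \ch S^-) = \ch F(\nu)\cdot e^{\rho(\fru)}$ is sound), and that additivity of the index together with the parabolic KLV character formula yields the Euler characteristic $I_D(L(\lambda)) = [H_D^+(L(\lambda))]-[H_D^-(L(\lambda))]$ as an alternating sum. Up to that point the argument is essentially the content of [HX] Proposition 5.2. However, the final step contains a genuine gap: passing from the Dirac \emph{index} to the Dirac \emph{cohomology} requires showing that $H_D^+(L(\lambda))$ and $H_D^-(L(\lambda))$ share no $\frl$-type. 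You claim that ``Vogan's conjecture together with the fact that distinct $x\in{}^JW^I$ produce distinct $\frl$-highest weights shows no $\frl$-type can appear in both parities.'' This does not follow. Distinctness of the weights $w_Ix\cdot\mu+\rho(\fru)$ over $x\in{}^JW^I$ only rules out \emph{different} $x$'s contributing the \emph{same} type; it says nothing about a \emph{single} type $F(w_Ix\cdot\mu+\rho(\fru))$ occurring with positive multiplicity in both $H_D^+$ and $H_D^-$, which would cancel in the index and be invisible to your calculation. Vogan's conjecture only constrains which $\frl$-infinitesimal characters can appear; it cannot detect parity. Note also that ``Dirac cohomology passes to an additive invariant on the Grothendieck group'' is only true of the index, so it does \emph{not} ``suffice to compute $H_D$ on each parabolic Verma module and assemble the answer''.

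What [HX] actually supplies — and what the surrounding text of the paper recalls — is a separate parity statement: one first constructs injective $\frl$-module maps $H_D^{\pm}(V)\hookrightarrow H_{\pm}(\fru,V)\otimes\bbC_{\rho(\bar\fru)}$ ([HX] Proposition~4.8), and then proves the key Lemma~5.11 that $\Hom_\frl\bigl(H_+(\fru,V),H_-(\fru,V)\bigr)=0$ for simple $V$, which relies on the positivity and parity properties of the relative KLV polynomials ([HX] Proposition~6.14) — not merely on the character formula. Combined with the index equality (your Step~2), this forces the injections to be isomorphisms, giving $H_D(L(\lambda))\cong H_*(\fru,L(\lambda))\otimes\bbC_{-\rho(\fru)}$, and the theorem then follows since the relative KLV polynomials are, by definition, Poincar\'e polynomials of $\Ext_{\caO^\frq}^\bullet(M_x,L_w)$, which compute the $\fru$-homology. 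Without some substitute for this parity lemma, your index computation cannot be promoted to a computation of $H_D$. Your suggested fallback via a BGG-type resolution would not by itself close the gap either: a resolution of $L(\lambda)$ again only controls Euler characteristics unless one additionally proves the relevant cohomology is concentrated in alternating degrees, which is the very point at issue.
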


\begin{remark}
Applying the action of Chevalley automorphism (see section 6) on Dirac cohomology,
we can also determine Dirac cohomology of simple lowest
weight modules.
\end{remark}

%%%%%%%%%%%%%%%%%%%%%% Section 5 %%%%%%%%%%%%%%%%%%%%%%%%%%%%%
\section{Dirac cohomology and $\fru$-cohomology}
%%%%%%%%%%%%%%%%%%%%%%%%%%%%%%%%%%%%%%%%%%%%%%%%%%%%%%%%%%%%%
%
In this section we review the results on Dirac cohomology
and $\frp^+$-cohomology of unitary representations for Hermitian symmetric case.
Then we discuss the simple highest weight modules in $\caO^\frq$.
We use quite different techniques for these two cases.

Suppose that $G$ is simple and Hermitian symmetric, with
maximal compact subgroup $K$.  In this case the $\frk$-module $\frg$
decomposes as
$\frg=\frk\oplus\frp=\frk\oplus\frp^+\oplus\frp^-$.
We can choose the basis $b_i$ of $\frp$ in the following special way. Let
$\Delta_n^+=\{\beta_1,\dots,\beta_m\}$. For each $\beta_i$ we
choose a root vector $e_i\in\frp^+$. Let $f_i\in\frp^-$ be the root vector
for the
root $-\beta_i$ such that $B(e_i,f_i)=1$. Then for the basis
$b_i$ of $\frp$ we choose $e_1,\dots e_m;\, f_1,\dots f_m$. The
dual basis is then $f_1,\dots f_m;\, e_1,\dots e_m$. Thus the Dirac operator is
\begin{equation*}
D=\sum_{i=1}^m e_i\otimes f_i + f_i\otimes e_i.
\end{equation*}

We also note that in this case $G$ is of equal rank and in particular
$\frp$ is even-dimensional.  Therefore,
there is a unique irreducible $C(\frp)$-module, the
spin module $S$, which we choose to construct as $S=\bigwedge\frp^+$.
It is also a module for the double cover $\Kt$ of $K$.
Let $X$ be a  $(\frg,K)$-module. Since $\frp^+\cong(\frp^{-})^{*}$, we have
\eq
\label{spinwedge}
X\otimes S\cong X\otimes
{\textstyle\bigwedge}\frp^+\cong\Hom({\textstyle\bigwedge}\frp^-,X)
\eeq
as vector spaces.
Note that the underlying vector space ${\textstyle\bigwedge}\frp^+$ of the
spin module $S$ carries the adjoint action of
$\frk$, but the relevant $\frk$-action on $S$
is the spin action defined using the map (\ref{mapalpha}). The spin
action is equal to the
adjoint action shifted by the character $-\rho_n$ of $\frk$ (see
\cite[Proposition~3.6]{Ko2}).
So as a $\frk$-module, $X\otimes S$ differs from $X\otimes
{\textstyle\bigwedge}\frp^+$ and
$\Hom({\textstyle\bigwedge}\frp^-,X)$ by a twist by the $1$-dimensional
$\frk$-module $\bbC_{-\rho_n}$.

Let $C=\sum_{i=1}^mf_i\otimes e_i$ and $C^-=\sum_{i=1}^me_i\otimes f_i$;
so $D=C+C^-$. Then, under the identifications
(\ref{spinwedge}), $C$ acts on $X\otimes S$ as the $\frp^-$-cohomology
differential, while $C^-$ acts by $2$ times the $\frp^+$-homology
differential (see \cite[Proposition~9.1.6]{HP2} or \cite{HPR}).
Furthermore, $C$ and $C^-$ are adjoints of each other with respect to the
Hermitian inner product on $X\otimes S$ mentioned above (see
\cite[Lemma~9.3.1]{HP2} or \cite{HPR}).
It was proved that Dirac cohomology is isomorphic to
$\frp^-$-cohomology up to a one-dimensional character
by using a version of Hodge decomposition.

\begin{theorem}[\cite{HPR}, Theorem 7.11]\label{p^+cohomology}
Let $X$ be a unitary $(\frg,K)$-module. Then
\[
H_D(X)\cong H^*(\frp^-,X)\otimes \bbC_{-\rho(\frp^+)}\cong H_*(\frp^+,X)\otimes \bbC_{-\rho(\frp^+)}
\]
as $\frk$-modules. Moreover, the above isomorphisms hold for a parabolic subalgebra
$\frq=\frl+\fru$ as long as $\frl\subseteq \frk$ and $\fru\supseteq \frp^+$, that is
\[
H_D(X)\cong H^*(\fru^-,X)\otimes \bbC_{-\rho(\fru)}\cong H_*(\fru,X)\otimes \bbC_{-\rho(\fru)}
\]
as $\frl$-modules.
\end{theorem}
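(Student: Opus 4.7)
The plan is a Hodge-theoretic argument building directly on the structural material recorded just before the statement. The preceding paragraph supplies the three key ingredients: under the identification $X\otimes S\cong \Hom(\bigwedge\frp^-,X)$, the Dirac operator splits as $D=C+C^-$, where $C=\sum f_i\otimes e_i$ realizes the $\frp^-$-cohomology differential and $C^-=\sum e_i\otimes f_i$ realizes twice the $\frp^+$-homology differential; these two pieces square to zero (they are genuine Koszul differentials on abelian nilradicals); and $C$ and $C^-$ are Hermitian adjoints of each other with respect to the natural positive inner product on $X\otimes S$ coming from the unitary structure on $X$.  Expanding the square yields
\[
D^{2}=(C+C^-)^{2}=CC^-+C^-C,
\]
which is the associated combinatorial Laplacian.

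The first step is to identify $H_D(X)$ with the harmonic elements.  For any $v\in X\otimes S$ one has $\langle D^{2}v,v\rangle=\|Cv\|^{2}+\|C^{-}v\|^{2}$, so $D^{2}v=0$ iff $Cv=C^{-}v=0$.  Because $X$ is unitary, Section~1 gives $H_D(X)=\ker D=\ker D^{2}$, hence $H_D(X)=\ker C\cap\ker C^-$.  The second step is the standard Hodge decomposition: on each $\widetilde K$-isotypic component (which is finite-dimensional since $X$ is admissible), $X\otimes S$ splits orthogonally as $(\ker C\cap\ker C^-)\oplus \im C\oplus \im C^-$.  Consequently the inclusion $\ker C\cap\ker C^-\hookrightarrow \ker C$ descends to an isomorphism onto $\ker C/\im C=H^{*}(\frp^-,X)$, and symmetrically onto $\ker C^-/\im C^-=H_{*}(\frp^+,X)$.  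The $\bbC_{-\rho(\frp^+)}$ twist appears because the spin $\frk$-action on $S$ differs from the adjoint action on $\bigwedge\frp^+$ precisely by the character $-\rho_n=-\rho(\frp^+)$, as noted in the excerpt.

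For the second assertion, concerning a parabolic $\frq=\frl+\fru$ with $\frl\subseteq\frk$ and $\fru\supseteq\frp^+$, I would replace the symmetric-pair Dirac operator by Kostant's cubic Dirac operator for the pair $(\frg,\frl)$, which lives in $U(\frg)\otimes C(\frs)$ with $\frs=\fru\oplus\fru^-$.  Since $\fru\cap\fru^-=0$ and $\fru\supseteq\frp^+$, one gets $\fru\cap\frp=\frp^+$ and hence $\fru=(\fru\cap\frk)\oplus\frp^+$, so $\fru$ and $\fru^-$ form maximal isotropic subspaces of $\frs$ dual under $B$.  Choosing dual bases of $\fru$ and $\fru^-$ (in analogy with $e_i,f_i$) I would write $D=C+C^-$, where $C$ should realize the $\fru^-$-cohomology differential and $C^-$ twice the $\fru$-homology differential, and rerun the Hodge argument.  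The spin/adjoint discrepancy becomes the twist $\bbC_{-\rho(\fru)}$, which is consistent with the Hermitian-symmetric case via $\rho(\fru)=\rho(\frp^+)$ when $\frl=\frk$.

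The step I expect to be the main obstacle is the splitting $D=C+C^-$ in the parabolic setting, because here $\fru$ is no longer abelian and the cubic correction $1\otimes v$ is nontrivial.  One must verify that $v$ distributes correctly between the two summands so that $C^{2}=(C^-)^{2}=0$ and so that $C,\,C^-$ really coincide with the Chevalley--Eilenberg differentials on $\Hom(\bigwedge\fru^-,X)$ and $X\otimes\bigwedge\fru$ respectively; the adjoint relation $C^{*}=C^-$ must then be reestablished with respect to the invariant Hermitian form on $X\otimes S$.  Once these identifications are in place the Laplacian identity $D^{2}=CC^-+C^-C$ is recovered and the Hodge/unitarity argument of the first two paragraphs applies verbatim to yield the desired $\frl$-module isomorphisms.
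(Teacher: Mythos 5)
Your proposal is correct and follows essentially the same Hodge-theoretic strategy that the paper (and its source, [HPR, Theorem 7.11]) uses: split $D=C+C^-$ with $C,C^-$ the Koszul/Chevalley--Eilenberg differentials, use the adjointness $C^*=C^-$ and unitarity to identify $H_D(X)$ with the harmonic space $\ker C\cap\ker C^-$, and read off both the $\frp^-$-cohomology and $\frp^+$-homology via the orthogonal Hodge decomposition on each $\widetilde K$-isotypic component, with the $-\rho$-shift accounting for the discrepancy between the spin and adjoint actions. The technical points you flag for the parabolic case (that $D(\frg,\frl)=C+C^-$ still holds with the cubic term correctly distributed, that $C$ and $C^-$ remain differentials and adjoints) are exactly what [HP2, Prop.~9.1.6, Lemma~9.3.1] and [HPR] establish, after which, as you say, the argument runs verbatim.
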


Note that we may use $\bigwedge\frp^-$ instead of $\bigwedge\frp^+$ to
construct the spin module $S$.  Then we have
\begin{equation}\label{p+cohomology}
H_D(X)\cong H^*(\frp^+,X)\otimes \bbC_{\rho(\frp^+)} \cong H_*(\frp^-,X)\otimes \bbC_{\rho(\frp^+)}.
\end{equation}
Namely, the Dirac operator is independent of the choice
of positive roots. Thus, we also have
\[
H^*(\frp^+,X)\otimes \bbC_{\rho(\frp^+)}\cong H^*(\frp^-,X)\otimes \bbC_{-\rho(\frp^+)},
\]
and
\[
H_*(\frp^+,X)\otimes \bbC_{-\rho(\frp^+)}\cong H_*(\frp^-,X)\otimes \bbC_{\rho(\frp^+)}.
\]
It also follows that we know the Dirac cohomology of all irreducible unitary
highest weight modules explicitly from Enright's calculation of $\frp^+$-cohomology [E].

Now suppose $\frq=\frl+\fru$ is a parabolic subalgebra of $\frg$ as in section 4.
We recall the result from [HX] on relation between Dirac cohomology with respect
to $D(\frg,\frl)$ and $\fru$-cohomology.
We note that the spin action $\alpha(\frl)$ on S makes it a finite-dimensional
$\frl$-module. If $V\in\mathcal{O}^\frq$, then $V\otimes
S$ is a direct sum of finite-dimensional simple $\frl$-modules.
Hence, any submodule, quotient or subquotient of $V\otimes S$ is
also a direct sum of finite-dimensional simple $\frl$-modules.

Then the Casimir element $\Omega_\frg$ acts semisimply on $V$.
We have shown that $H_D(V)$ is isomorphic to $\fru$-homology up to a character in [HX].
We recall here the main steps of the proof of this isomorphism ([HX] Theorem 5.12).
The $\fru$-homology is $\bbZ_2$-graded as follows:
$$H_+(\fru, V)=\bigoplus_{i=0} H_{2i}(\fru, V)\ \mbox{and}\ H_-(\fru,
V)=\bigoplus_{i=0} H_{2i+1}(\fru, V).$$
Then there are injective $\frl$-module homomorphisms ([HX] Proposition 4.8):
$$\label{equal for simple}
H^{\pm}_D(V)\rightarrow H_\pm(\fru, V)\otimes
\mathbb{C}_{\rho(\bar\fru)}.$$
Note that we also have ([HX] Proposition 5.2)
$$  \ch H_D^+(V)-\ch H_D^-(V)=(\ch H_+(\fru, V)-\ch H_-(\fru,
V))*\ch\mathbb{C}_{\rho(\bar{\fru})}.$$
Then the properties of KLV polynomials (see Proposition 6.14 [HX]) imply the following parity
condition:  $H_+(\fru, V)$
and $H_-(\fru, V)$ have no common $\frl$-submodules, namely,
$$
 \Hom_\frl(H_+(\fru, V), H_-(\fru, V))=0.
$$
This is the key lemma in [HX] (Lemma 5.11). It follows that the embeddings
$$H^{\pm}_D(V)\rightarrow H_\pm(\fru, V)\otimes
\mathbb{C}_{\rho(\bar\fru)}$$ are isomorphisms.

\begin{theorem} [Theorem 5.12, Corollary 5.13 \cite{HX}]
Let $V$ be a simple highest weight module in
$\mathcal{O}^\frp$. Then we have the following $\frl$-module
isomorphisms:
$$
H_D(V)\simeq H_*(\fru, V)\otimes \mathbb{C}_{-\rho(\fru)}\simeq H^*(\bar\fru, V)\otimes \mathbb{C}_{-\rho(\fru)}
.$$
\end{theorem}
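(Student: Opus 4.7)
The plan is to realize both sides as cohomology of related differentials on the single space $V \otimes S$, and then to compare them via an Euler-Poincar\'e character identity together with a parity argument supplied by Kazhdan-Lusztig-Vogan theory.

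First, I would choose dual bases of $\fru$ and $\bar\fru$ with respect to $B$ and identify the spin module with $S \cong \bigwedge \fru$. Under the resulting isomorphism $V \otimes S \cong V \otimes \bigwedge \fru$, Kostant's cubic Dirac operator decomposes as $D = C + C^-$, where $C$ implements the Koszul differential computing $\fru$-homology, twisted by the character $\rho(\bar\fru)$ that measures the discrepancy between the spin action $\alpha$ on $S$ and the natural adjoint action on $\bigwedge \fru$, while $C^-$ implements the $\bar\fru$-homology differential. Because $V$ lies in $\caO^\frq$, the space $V \otimes S$ is a direct sum of finite-dimensional simple $\frl_\Delta$-modules, so $\Omega_{\frl_\Delta}$ acts semisimply and a Hodge-type analysis is viable. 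The $\bbZ$-grading on $\bigwedge \fru$ induces $\bbZ_2$-gradings $H_D(V) = H_D^+ \oplus H_D^-$ and $H_*(\fru, V) = H_+(\fru,V) \oplus H_-(\fru,V)$; filtering and comparing the differentials on the two presentations of $V\otimes S$ produces injective $\frl$-module maps
\begin{equation*}
H_D^{\pm}(V) \hookrightarrow H_\pm(\fru, V) \otimes \bbC_{\rho(\bar\fru)}.
\end{equation*}
Next, I would establish the character identity
\begin{equation*}
\ch H_D^+(V) - \ch H_D^-(V) = \bigl(\ch H_+(\fru, V) - \ch H_-(\fru, V)\bigr) \ast \ch \bbC_{\rho(\bar\fru)},
\end{equation*}
which expresses the fact that the super-character of the common graded complex $V\otimes S$ is preserved by passage to cohomology under either description of the differential.

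The main obstacle is the parity vanishing $\Hom_\frl(H_+(\fru,V), H_-(\fru, V)) = 0$. To prove this I would invoke the Kazhdan-Lusztig-Vogan expansion of $\ch L(\lambda)$ in terms of parabolic Verma characters $\ch M_I(\mu)$, and use the non-negativity and length-parity properties of the relative KLV polynomials ${}^J P_{x,w}^I$ from Theorem 6.16 together with Proposition 6.14 of \cite{HX} to conclude that each simple $\frl$-constituent $F(\mu)$ of $H_*(\fru, V)$ occurs in degrees of only one parity. Granting this, the character identity forces the injections displayed above to be equalities on each graded piece, yielding $H_D(V) \cong H_*(\fru, V) \otimes \bbC_{-\rho(\fru)}$ after identifying $\rho(\bar\fru) = -\rho(\fru)$. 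The second isomorphism with $H^*(\bar\fru, V) \otimes \bbC_{-\rho(\fru)}$ then follows either by dualizing the Koszul complex via $\bar\fru \cong \fru^*$ induced by $B$, or, more conceptually, by re-running the entire argument with the roles of $\fru$ and $\bar\fru$ interchanged --- the cubic Dirac operator being intrinsic and independent of the polarization $\frs = \fru \oplus \bar\fru$.
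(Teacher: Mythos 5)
Your proposal reproduces the argument of [HX] exactly as it is summarized in the paper: you identify $S\cong\bigwedge\fru$ and the decomposition $D=C+C^-$, produce the injections $H^\pm_D(V)\hookrightarrow H_\pm(\fru,V)\otimes\bbC_{\rho(\bar\fru)}$ ([HX] Prop.~4.8), invoke the Euler--Poincar\'e character identity ([HX] Prop.~5.2), and close the gap with the KLV parity condition $\Hom_\frl(H_+(\fru,V),H_-(\fru,V))=0$ ([HX] Lemma 5.11), forcing the injections to be isomorphisms. This is the paper's own proof, down to the order of the three main lemmas.
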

As mentioned, the special case when $\frq$ contains $\frp^+$ in the hermitian symmetric setting
the above theorem is proved for unitary Harish-Chandra modules in
[HPR].  The argument in [HPR] depends on the existence of the positive definite Hermitian form on
these modules .
This argument cannot be extended to any simple highest weight modules.

For a Harish-Chandra module $X$ for $G$ of hermitian symmetric type,
we have similar injective homomorphisms
of
$$H^\pm_D(X) \rightarrow H_\pm(\frp^+,X)\otimes \bbC_{-\rho(\frp^+)}.$$.

\begin{conj} Suppose that $X$ is a simple Harish-Chandra module.
Then
$$ \Hom_{\Kt}(H_+(\frp^+, X), H_-(\frp^+, X))=0.$$
In particular, it implies the above injective homomorphisms are actually isomorphisms.
In other words, we conjecture that Theorem \ref{p^+cohomology} holds
for any simple $(\frg,K)$-module $X$.
\end{conj}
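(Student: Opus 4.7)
The plan is to extend the argument of [HX, Theorem 5.12] -- which handles simple highest weight modules in $\caO^\frq$ -- to the Harish-Chandra module setting. The injective $\widetilde K$-maps $H^\pm_D(X) \to H_\pm(\frp^+, X)\otimes \bbC_{-\rho(\frp^+)}$ stated just before the conjecture are available for any $(\frg,K)$-module; they come from the chain-level decomposition $D = C + C^-$ on $X\otimes\bigwedge\frp^+$ together with a standard Hodge-type splitting, where unitarity was used in [HPR] only to identify $\Ker D^2$ with the harmonic representatives. So the content of the conjecture is that these injections are surjective, equivalently, that $\Hom_{\widetilde K}(H_+(\frp^+, X), H_-(\frp^+, X))$ vanishes.

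Taking Euler characteristics as in [HX, Proposition 5.2], one obtains
$$\ch H_+^D(X) - \ch H_-^D(X) = \bigl(\ch H_+(\frp^+, X) - \ch H_-(\frp^+, X)\bigr) \ast \ch\bbC_{-\rho(\frp^+)}.$$
By Vogan's conjecture and the Casselman--Osborne theorem, every $\widetilde K$-type $E_\mu$ that appears in $H_\bullet(\frp^+, X)$ must have $\mu + \rho$ lying in the Weyl group orbit of the infinitesimal character $\Lambda$ of $X$, so only finitely many isotypic components occur. The conjecture is thus equivalent to a parity statement: for each admissible $\mu$, the $E_\mu$-isotypic component of $H_\bullet(\frp^+, X)$ is concentrated in homological degrees of a single parity.

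I see two plausible routes to this parity statement. The first mimics [HX] directly: write $[X]$ in the Grothendieck group as an alternating sum $\sum_\gamma a_\gamma [I(\gamma)]$ of standard Harish-Chandra modules $I(\gamma)$ with Kazhdan--Lusztig--Vogan coefficients $a_\gamma$; compute $H_\bullet(\frp^+, I(\gamma))$ via the Hecht--Schmid character formula (or the Koszul complex applied to the Langlands data of $I(\gamma)$); then invoke the $(\frg,K)$-analogue of the KLV parity property used in [HX, Proposition 6.14] to show that the alternating signs align with the homological parities, so that no cancellation occurs between $H_+(\frp^+, X)$ and $H_-(\frp^+, X)$. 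The second route uses a Jacquet-functor reduction in the spirit of [DH1] to transport the problem to category $\caO^\frq$, where the desired vanishing is the content of Theorem 5.12 of [HX]; this requires that the Jacquet functor be suitably compatible with both the Dirac operator and $\fru$-(co)homology.

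The main obstacle, in either route, is the parity input. For the first approach it rests on the $(\frg,K)$-analogue of classical Kazhdan--Lusztig parity, which depends on the Weil purity of intersection cohomology on $K$-orbits in the flag variety via Beilinson--Bernstein localization. For the second approach it requires verifying compatibility of the Jacquet functor with the Hermitian decomposition $\frg = \frk \oplus \frp^+ \oplus \frp^-$, which is delicate because the Jacquet functor is naturally defined relative to a minimal parabolic rather than the $\theta$-stable Hermitian parabolic $\frk + \frp^+$. I expect the first approach to be cleaner algebraically, while the second may yield a more direct geometric insight by exporting the highest weight case of [HX] wholesale.
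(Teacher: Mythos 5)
The statement you are trying to prove is an open \emph{conjecture} in the paper: the author explicitly points out, just before stating it, that the argument from [HPR] depends on the existence of a positive definite Hermitian form and hence ``cannot be extended to any simple highest weight modules.'' No proof is given in the paper, so there is no paper proof to compare your proposal against. Your proposal is, by your own closing admission, a plan rather than a proof: the ``parity input'' you flag as the main obstacle is precisely the content of the conjecture, so both of your routes beg the question at the decisive moment.

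There is also a concrete warning in the paper itself that the first route cannot be carried out in the generality you envision. In Remark 9.2 the author notes that the $\frt$-equivariant parity condition $\Hom_T(H^+(\fru,X),H^-(\fru,X))=0$ \emph{fails} for the degenerate limit of discrete series of $SU(2,1)$ at infinitesimal character $0$ (for a $\theta$-stable Borel $\frb$ containing neither $\frp^+$ nor $\frp^-$). The KLV parity phenomenon you want to invoke holds at regular infinitesimal character (this is Vogan's Theorem~7.2 in [V1], used in the paper's Theorem~9.1), but does not survive passage to singular infinitesimal characters for a general nilradical. Any successful version of your first route therefore must exploit the two features that distinguish Conjecture~5.3 from that counterexample: $\frp^+$ is an \emph{abelian} nilradical attached to the Hermitian parabolic $\frk\oplus\frp^+$, and the equivariance demanded is only with respect to $\widetilde K$ rather than the finer $\widetilde T$. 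Nothing in your sketch makes use of either feature; as written, your appeal to ``the $(\frg,K)$-analogue of the KLV parity property'' is exactly the unproven assertion the conjecture is asking for.

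Your second route (Jacquet functor transport to $\caO^\frq$) has the problem you yourself identify, and it is serious: the Jacquet functor of [DH1] is built from a minimal parabolic, not the $\theta$-stable Hermitian parabolic $\frk+\frp^+$, and there is no known compatibility between that functor, the operator $D=C+C^-$, and the $\frp^\pm$-(co)homology differentials. Without that compatibility you cannot transport the parity statement from $\caO^\frq$ back to Harish-Chandra modules. In short, what you have is a reasonable outline of where a proof might come from, but the single step that would actually settle the conjecture is the step you leave open, so this does not constitute a proof, and the paper's own treatment (leaving it as a conjecture, with partial results in Section~9 for regular infinitesimal character) is consistent with that assessment.
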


We note that Dirac cohomology for unitary lowest weight modules
is an important ingredient for generalization of classical branching
rules [HPZ] [H2].

\begin{remark} It is obvious that all the proofs for the theorems in this section on highest weight modules
can be done for lowest weight modules.
\end{remark}

We review a result about
action of an automorphism on Dirac cohomology [HPZ].   The above remark is
a consequence by taking the Chevalley automorphism.
Let $\tau$ be an automorphism of $G$ preserving $K$. Then $\tau\big|_K$ is
an automorphism of $K$. Also, $\tau$ induces automorphisms of $\frg_0$ and
$\frg$, denoted again by $\tau$, and $\tau$ preserves the Cartan
decomposition
$\frg=\frk\oplus\frp$. Finally, $\tau\big|_\frp$ extends to an
automorphism of the Clifford algebra $C(\frp)$, denoted again by $\tau$.
Let $X$ be a $(\frg,K)$-module. If we set $X^\tau=X$, then
$(\pi\circ\tau,X^\tau)$ is also a
$(\frg,K)$-module. Similarly, for any $K$-module $(\varphi,E)$, if we set
$E^\tau=E$, then $(\varphi\circ\tau,E^\tau)$
is also a $K$-module. The same is true if we replace $K$ by $\Kt$. The
following property of Dirac cohomology
was proved for any unitary $(\frg,K)$-module in [HPZ] (Prop. 5.1 of [HPZ]).
The same proof extends straightforwardly to any $(\frg,K)$-module (Prop. 3.12 [H2]).

\begin{prop}
\label{twist dirac}
Let $X$ be a $(\frg,K)$-module. Then
\[
H_D(X^\tau)\cong (H_D(X))^\tau.
\]
\end{prop}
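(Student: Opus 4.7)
The whole proof rests on the fact that the Dirac operator $D = \sum_i Z_i\otimes Z_i \in U(\frg)\otimes C(\frp)$ is an intrinsic object, independent of the choice of orthonormal basis $\{Z_i\}$ of $\frp$. Since $\tau$ preserves $K$ and hence the Cartan decomposition $\frg = \frk\oplus\frp$, and since $B$ is invariant under $\tau$ (as the Killing form on the semisimple part), the restriction $\tau|_\frp$ lies in $O(\frp,B)$. By the universal property of the Clifford algebra, $\tau|_\frp$ extends to an algebra automorphism of $C(\frp)$, still denoted by $\tau$, so $\tau$ acts as an algebra automorphism on $U(\frg)\otimes C(\frp)$. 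The first step is to verify $\tau(D) = D$: this is immediate, since $\tau(D) = \sum_i \tau(Z_i)\otimes \tau(Z_i)$ and $\{\tau(Z_i)\}$ is again an orthonormal basis of $\frp$, so basis-independence forces this expression to equal $D$.

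Next I would transport the Dirac complex. Viewing $X\otimes S$ and $X^\tau\otimes S^\tau$ as the same underlying vector space, the action of $D$ on $X^\tau\otimes S^\tau$ is $\sum (\pi\circ\tau)(Z_i)\otimes (\sigma\circ\tau)(Z_i) = \sum \pi(\tau Z_i)\otimes \sigma(\tau Z_i)$, which coincides with the action of $D$ on $X\otimes S$ by basis-independence. Consequently $\ker D$ and $\ker D\cap \im D$ are identical subspaces under either module structure, and passing to cohomology gives a vector space isomorphism $H_D(X^\tau\otimes S^\tau) \cong H_D(X)$. Since the $\Kt$-action on $X^\tau\otimes S^\tau$ is by construction the $\tau$-twist of the $\Kt$-action on $X\otimes S$, this isomorphism upgrades to an isomorphism $H_D(X^\tau\otimes S^\tau) \cong (H_D(X))^\tau$ of $\Kt$-modules.

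The last step is to identify $H_D(X^\tau\otimes S^\tau)$ with $H_D(X^\tau)$, the latter being computed using the fixed spin module $S$ rather than $S^\tau$. This reduces to exhibiting a $\Kt$-equivariant isomorphism $S^\tau \cong S$ of $C(\frp)$-modules, which exists by the essential uniqueness of the spin module of $C(\frp)$, modulo the familiar parity subtleties that arise when $\tau|_\frp\in O(\frp)\setminus SO(\frp)$ (half-spin modules may be exchanged) or when $\dim\frp$ is odd (two inequivalent spin modules). I expect this last identification, together with the bookkeeping to ensure it is $\Kt$-equivariant for a compatible lift of $\tau$ to $\Kt$, to be the main technical obstacle. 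Once it is in place, it chains with the isomorphism of the previous paragraph to give $H_D(X^\tau)\cong (H_D(X))^\tau$, as asserted.
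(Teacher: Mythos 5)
The paper does not give its own proof here; it cites [HPZ, Prop.~5.1] for the unitary case and [H2, Prop.~3.12] for the extension. Your architecture is the natural one and steps~1 and~2 are correct: basis-independence of $D$ gives $\tau(D)=D$, and on the common underlying vector space the $D$-action on $(X\otimes S)^\tau = X^\tau\otimes S^\tau$ is literally the same operator as the $D$-action on $X\otimes S$, so the $D$-cohomology of $X^\tau\otimes S^\tau$ is $(H_D(X))^\tau$ as a $\Kt$-module.

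You correctly flag step~3 --- identifying the $D$-cohomology of $X^\tau\otimes S^\tau$ with $H_D(X^\tau)$, which is computed on $X^\tau\otimes S$ --- as the remaining gap, but you leave it unresolved. It closes by the same basis-independence principle you already invoked for $D$. The $\Kt$-action on $S$ comes from $\alpha:\frk\to C(\frp)$, $\alpha(Y)=-\tfrac14\sum_j[Y,Z_j]Z_j$, and this map is also independent of the orthonormal basis; hence $\tau\circ\alpha=\alpha\circ\tau$. Take any linear $\phi:S\to S$ with $\phi\circ\sigma(\tau c)=\sigma(c)\circ\phi$ for all $c\in C(\frp)$; such $\phi$ exists because $\sigma$ and $\sigma\circ\tau$ are irreducible $C(\frp)$-modules. (When $\dim\frp$ is odd and $\tau$ swaps the two inequivalent spin modules, replacing $S$ by its grading twist only changes the sign of the $D$-action and hence leaves $H_D$ unchanged, so the needed isomorphism can be arranged; and the half-spin swap when $\det\tau|_\frp=-1$ is irrelevant because $S=S^+\oplus S^-$ is unique.) Applying the intertwining property of $\phi$ to $c=\alpha(Y)$ and using $\tau\alpha(Y)=\alpha(\tau Y)$ shows that $\phi$ intertwines the spin action of $\frk$ on $S^\tau$ with that on $S$, i.e.\ $\phi$ is $\Kt$-equivariant for a compatible lift $\tilde\tau$ of $\tau$. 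Then $\id_X\otimes\phi:X^\tau\otimes S^\tau\to X^\tau\otimes S$ is a $\Kt$-isomorphism commuting with $D$, and the chain $H_D(X^\tau)\cong(H_D(X))^\tau$ is complete.
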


%
%%%%%%%%%%%%%%%%%%%%%% Section 6 %%%%%%%%%%%%%%%%%%%%%%%%%%%%%
\section{Calculation of Dirac cohomology in stages}
%%%%%%%%%%%%%%%%%%%%%%%%%%%%%%%%%%%%%%%%%%%%%%%%%%%%%%%%%%%%%

In this section we review another technique of calculating Dirac cohomology, namely calculation in stages.
This technique is needed to study Dirac cohomology of elliptic presentations.
Recall that $\frg_0$ is the Lie algebra of $G$ with a compact subalgebra $\frk_0$, the Lie algebra of $K$.
We assume $\frk_0$ is of equal rank with $\frg_0$.  Then a Cartan subalgebra $\frt_0$
of $\frk_0$
is also a Cartan subalgebra of $\frg_0$.  We drop the subscripts for their complexification.
The bilinear form $B$ on $\frg$
is non-degenerate and the restriction $B|_\frt$ remains to be non-degenerate.
Then we have an orthogonal decomposition
$\frg=\frt\oplus\frs$ with $\frs$ the orthogonal complement of $\frt$ with respect to $B$. It follows that
$B|_{\frs}$ is also non-degenerate. The cubic Dirac operator $D(\frg,\frt)$ due to
Kostant \cite{Ko2} is in $U(\frg)\otimes C(\frs)$. Let $\{Y_i\}_{i=1}^n$ be an orthonormal basis of $\frs$.
 Then we can write (see \cite[4.1.1]{HP2})
$$D(\frg,\frt)=\sum_{i=1}^nY_i\otimes Y_i +{1\over 2}\sum_{i<j<k}B([Y_i,Y_j],Y_k)\otimes Y_iY_jY_k
.$$

Let $\frg=\frk\oplus\frp$ be the complexification of
the Cartan decomposition and let $\frs_1$ be the orthogonal complement of $\frt$ in
$\frk$.  Then $\frs=\frs_1\oplus \frp$.
As in \cite[\S 9.3]{HP2} we write the Dirac operator
$D(\frg,\frt)$ in terms of $D(\frg,\frk)$ and $D(\frk,\frt)$
by using an orthonormal basis for $\frs$ formed by
orthonormal bases $Z_i$ for $\frp$ and $Z'_j$ for $\frs_1$.

Identifying
$ U(\frg)\otimes C(\frs)$ with $U(\frg)\otimes
C(\frp)\bar\otimes C(\frs_1)$,
where $\bar\otimes$ denotes the $\bbZ_2$-graded tensor product, we
can write
\begin{equation}
\label{eqn.3.4}
\begin{array}{rl}
D(\frg,\frt) = & \sum_i Z_i \otimes Z_i \otimes 1 + \sum_j Z_j' \otimes 1\otimes Z_j' \\
& +\frac{1}{2} \sum_{i<j}\sum_k  B([Z_i,Z_j],Z_k')\otimes Z_i Z_j \otimes Z_k' \\
& +\frac{1}{2} \sum_{i<j<k} B([Z_i',Z_j'],Z_k')\otimes 1 \otimes Z_i' Z_j' Z_k'.
\end{array}
\end{equation}
Regarding $U(\frg)\otimes C(\frk)$ as the subalgebra
$U(\frg)\otimes C(\frk)\otimes 1$ of $U(\frg)\otimes
C(\frk)\bar\otimes C(\frs_1)$,
the first summand in (\ref{eqn.3.4})  gives
$D(\frg,\frk)$ and the remaining three summands in (\ref{eqn.3.4})
come from the cubic Dirac operator
corresponding to $\frt\subset\frk$. However, this is an element of
the algebra $U(\frk)\otimes C(\frs_1)$, and this algebra has
to be embedded into $U(\frg)\otimes C(\frp)\bar\otimes
C(\frs_1)$ diagonally, by
$$
\Delta: U(\frk)\otimes C(\frs_1) \cong
U(\frk_\Delta)\bar\otimes C(\frs_1) \subset U(\frg)\otimes
C(\frp)\bar\otimes C(\frs_1).
$$
Here $U(\frk_\Delta)$ is embedded into $U(\frg)\otimes C(\frp)$ by a
diagonal embedding while the factor $C(\frs_1)$ remains
unchanged.  We denote the image $\Delta(D(\frk,\frt))$ by $D_\Delta(\frk,\frt)$.

\begin{theorem} [Theorem 3.2 \cite{HPR}, Theorem 9.4.1 \cite{HP2}]
With notation as above,
$D(\frg,\frt)$ decomposes as
$D(\frg,\frk)+D_\Delta(\frk,\frt)$. Moreover, the summands
$D(\frg,\frk)$ and $D_\Delta(\frk,\frt)$ anti-commute.
\end{theorem}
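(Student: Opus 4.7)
I plan to prove the two assertions separately: first the additive decomposition by term-by-term matching against the explicit expansion (\ref{eqn.3.4}), and then the anticommutativity by squaring and invoking Kostant's square formula (\ref{square}).

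For the decomposition, I would begin by observing that the first summand $\sum_i Z_i\otimes Z_i\otimes 1$ in (\ref{eqn.3.4}) is exactly $D(\frg,\frk)$: because $(\frg,\frk)$ is a symmetric pair, $[\frp,\frp]\subset\frk$ kills the cubic correction $\tfrac12\sum_{i<j<k}B([Z_i,Z_j],Z_k)Z_iZ_jZ_k$ in Kostant's recipe. The three remaining summands should assemble into $D_\Delta(\frk,\frt)$. I would write
\[
D(\frk,\frt)=\sum_j Z'_j\otimes Z'_j+\tfrac12\sum_{i<j<k}B([Z'_i,Z'_j],Z'_k)\otimes Z'_iZ'_jZ'_k\in U(\frk)\otimes C(\frs_1)
\]
and push it through $\Delta$. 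The cubic part passes unchanged and accounts for the fourth summand of (\ref{eqn.3.4}). The linear part becomes
\[
\sum_j\bigl(Z'_j\otimes 1+1\otimes\alpha(Z'_j)\bigr)\otimes Z'_j;
\]
the piece $\sum_j Z'_j\otimes 1\otimes Z'_j$ is the second summand in (\ref{eqn.3.4}), while the $\alpha$-piece, using $\alpha(Z'_j)=-\tfrac14\sum_i[Z'_j,Z_i]Z_i$, the $\ad$-invariance of $B$, and the Clifford relation $Z_iZ_l=-Z_lZ_i$ for $i\neq l$, collapses after antisymmetrization to $\tfrac12\sum_{i<j,k}B([Z_i,Z_j],Z'_k)\otimes Z_iZ_j\otimes Z'_k$, matching the third summand of (\ref{eqn.3.4}).

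For the anticommutation, I would square using (\ref{square}). Since $\frt$ has no roots in itself, the relevant $\rho$-corrections simplify, yielding
\begin{align*}
D(\frg,\frt)^2&=-\Omega_\frg\otimes 1+\Omega_{\frt_\Delta}-\|\rho\|^2,\\
D(\frg,\frk)^2&=-\Omega_\frg\otimes 1+\Omega_{\frk_\Delta}+\|\rho_\frk\|^2-\|\rho\|^2,\\
D_\Delta(\frk,\frt)^2&=-\Omega_{\frk_\Delta}+\Omega_{\frt_\Delta}-\|\rho_\frk\|^2,
\end{align*}
where the last line uses that $\Delta$ is an algebra homomorphism together with the key compatibility that the staged diagonal embedding of $\Omega_\frt$ (first into $U(\frk)\otimes C(\frs_1)$, then via $\Delta$) equals the direct diagonal embedding into $A=U(\frg)\otimes C(\frp)\bar\otimes C(\frs_1)$. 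This compatibility reduces to $\alpha_\frs(H)=\alpha_\frp(H)+\alpha_{\frs_1}(H)$ for $H\in\frt$, a consequence of $[\frt,\frp]\subset\frp$, $[\frt,\frs_1]\subset\frs_1$ and the graded identification $C(\frs)=C(\frp)\bar\otimes C(\frs_1)$. Adding the last two displays gives $D(\frg,\frk)^2+D_\Delta(\frk,\frt)^2=D(\frg,\frt)^2$, and combined with the decomposition this forces the anticommutator to vanish.

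The main obstacle will be the sign and ordering bookkeeping in the first part: one has to juggle the convention $uu'+u'u=-2B(u,u')$, the $\bbZ_2$-graded multiplication in $C(\frp)\bar\otimes C(\frs_1)$, and the antisymmetry of $\ad(Z'_j)$ on $\frp$, in order to see that the spin shift $\alpha(Z'_j)$ produces precisely the mixed $\frp$--$\frs_1$ cubic piece in (\ref{eqn.3.4}). By contrast, once the decomposition is in hand, the anticommutativity is almost formal; squaring is the cleanest way to avoid a head-on product computation of two cubic expressions inside $A$.
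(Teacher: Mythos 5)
Your proof is correct, and the decomposition half takes the same route the paper sketches: the paper records the explicit expansion (\ref{eqn.3.4}), notes that the first summand is $D(\frg,\frk)$ (the cubic term of $D(\frg,\frk)$ vanishes because $[\frp,\frp]\subset\frk$), and says the remaining three summands assemble into $D_\Delta(\frk,\frt)$ via the diagonal embedding. Your unpacking of the mixed term --- writing $\alpha(Z'_j)=-\tfrac14\sum_i[Z'_j,Z_i]Z_i$, invoking invariance of $B$ to convert $B([Z'_j,Z_i],Z_l)$ into $B([Z_i,Z_l],Z'_j)$, and antisymmetrizing in $i,l$ --- fills in exactly the bookkeeping the paper leaves implicit, and it is carried out correctly.

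For the anticommutativity, the paper itself supplies no argument (it defers to [HPR, Thm.~3.2] and [HP2, Thm.~9.4.1], where the vanishing of the anticommutator is checked by a direct Clifford-algebra computation term by term). Your squaring argument is a genuinely different and cleaner route: once the decomposition is known, three applications of Kostant's square formula (\ref{square}) and the cancellation $\Omega_{\frk_\Delta}-\Omega_{\frk_\Delta}=0$, $\|\rho_\frk\|^2-\|\rho_\frk\|^2=0$ force $D(\frg,\frk)^2+D_\Delta(\frk,\frt)^2=D(\frg,\frt)^2$, hence $\{D(\frg,\frk),D_\Delta(\frk,\frt)\}=0$. The one subtle point, which you correctly isolate, is that the $\Omega_{\frt_\Delta}$ appearing in $D(\frg,\frt)^2$ and the one produced by pushing $D(\frk,\frt)^2$ through $\Delta$ must agree; this is the identity $\alpha_\frs(H)=\alpha_\frp(H)\otimes 1+1\otimes\alpha_{\frs_1}(H)$ for $H\in\frt$, valid because $\frt$ stabilizes both $\frp$ and $\frs_1$ and the graded identification $C(\frs)\cong C(\frp)\bar\otimes C(\frs_1)$ sends $Z_i\mapsto Z_i\otimes 1$, $Z'_j\mapsto 1\otimes Z'_j$. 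There is no circularity, since Kostant's square formula is established for a single cubic Dirac operator independently of any decomposition-in-stages statement. What the direct computation buys is that it needs no $\rho$-shift bookkeeping and proves both assertions in one sweep; what your squaring argument buys is brevity and conceptual transparency, at the cost of having already worked out (\ref{square}).
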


The above decomposition holds in a slightly more general setting
as it is stated and proved in
Theorem 3.2 of \cite{HPR}
and in even more general setting in Theorem 9.4.1 \cite{HP2}.
The anti-commuting property given here can be applied to
calculate Dirac cohomology in stages.
For convenience,  we define the cohomology of any linear operator
$A$ on a vector space $V$ to be the vector space
$$H(A)=\ker A/(\im A\cap\ker A).$$
We also denote by $H(A;V)$ for the cohomology when we
emphasize the space $V$.
We call the operator $A$ semisimple if $V$ is the
(algebraic) direct sum of the eigenspaces of $A$.

Let $S$ be the simple module of the Clifford algebra $C(\frs)$.
If $X$ is a $(\frg,K)$-module, then $D(\frg,\frt)$ acts on $X\otimes S$.
We denote by $H_D(\frg,\frt;X)$ the cohomology of $X\otimes S$
with respect to $D(\frg,\frt)$; analogous notation will be used for
other Dirac operators.  We note that $H_D(\frg,\frt;X)$ is in fact
the cohomology of the operator $D(\frg,\frt)$ on $X\otimes S$, namely
$$H_D(\frg,\frt;X)=H(D(\frg,\frt);X\otimes S).$$

\begin{lemma} [Lemma 5.3 \cite{HPR}]
\label{lem.9.4.2} Let $A$ and $B$ be anti-commuting linear operators
on an arbitrary vector space $V$.
Assume that $A^2$ and $B$ are semisimple. Then $H(A+B)$ is the
cohomology (i.e., the kernel) of $B$ acting on $H(A)$.
\end{lemma}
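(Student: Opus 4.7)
The plan is to simultaneously diagonalize $A^2$ and $B$ and to verify the identity $H(A+B)=\ker(B|_{H(A)})$ summand by summand on the joint eigenspaces, using only the identity $(A+B)^2=A^2+B^2$ (which follows from $AB+BA=0$) and the invertibility of $A$ on the nonzero eigenspaces of $A^2$.

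First I would observe that the anti-commutativity of $A$ and $B$ forces $BA^2=A^2B$, so $B$ preserves every $A^2$-eigenspace. Since $A^2$ and $B$ are both semisimple and commute, one obtains a joint eigenspace decomposition $V=\bigoplus_{\lambda,\mu}V_{\lambda,\mu}$ with $A^2$ acting by $\lambda$ and $B$ by $\mu$ on $V_{\lambda,\mu}$. The operator $A$ preserves each $V_\lambda$ and flips the $B$-eigenvalue, $A(V_{\lambda,\mu})\subseteq V_{\lambda,-\mu}$, while $(A+B)^2=A^2+B^2$ is semisimple and acts on $V_{\lambda,\mu}$ by the scalar $\lambda+\mu^2$.

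Next I would isolate the part of $V$ that can contribute to $H(A+B)$. On the sum of nonzero eigenspaces of $(A+B)^2$ the operator $A+B$ is invertible and contributes nothing, which reduces the computation to $U:=\bigoplus_{\lambda+\mu^2=0}V_{\lambda,\mu}$. This $U$ splits as $V_{0,0}$ together with the paired summands $V_{-\mu^2,\mu}\oplus V_{-\mu^2,-\mu}$ for $\mu\ne 0$. On $V_{0,0}$ one has $B=0$, $A^2=0$ and $A+B=A$, so $H(A+B|_{V_{0,0}})=H(A|_{V_{0,0}})$. On each paired summand $A$ restricts to an isomorphism $V_{-\mu^2,\mu}\to V_{-\mu^2,-\mu}$, and the straightforward calculation writing $v=v_++v_-$ with $v_\pm\in V_{-\mu^2,\pm\mu}$ identifies $\ker(A+B)=\{-\mu^{-1}Av_-+v_-:v_-\in V_{-\mu^2,-\mu}\}$; since $(A+B)^2=0$ on $U$ forces $\im(A+B)\subseteq\ker(A+B)$, and the two spaces have the same dimension (each equal to $\dim V_{-\mu^2,-\mu}$), they coincide and $H(A+B)$ vanishes on this summand.

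A parallel decomposition handles $\ker(B|_{H(A)})$: on each $V_\lambda$ with $\lambda\ne 0$ the operator $A$ is invertible, so $H(A|_{V_\lambda})=0$; on $V_{0,\mu}\oplus V_{0,-\mu}$ with $\mu\ne 0$ the induced $B$-action on cohomology has eigenvalues $\pm\mu$ and hence is invertible; only the summand $V_{0,0}$, where $B=0$, contributes, giving $\ker(B|_{H(A)})=H(A|_{V_{0,0}})$. Combining with the previous step yields $H(A+B)=H(A|_{V_{0,0}})=\ker(B|_{H(A)})$, and the parenthetical identification of cohomology with kernel is the observation that, $B$ being semisimple on $H(A)$, one has $\ker B\cap\im B=0$ there. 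The main obstacle is the paired summand case: one needs the invertibility of $A$ on $V_{-\mu^2}$ to pair up $\ker(A+B)$ with $\im(A+B)$ and conclude vanishing of the cohomology; every other case is either automatic (invertibility of $A+B$) or a trivial identification ($A+B=A$ when $B=0$).
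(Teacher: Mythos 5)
Your strategy---simultaneously diagonalize $A^2$ and $B$, use $(A+B)^2=A^2+B^2$ to discard the part where $A+B$ is invertible, and then sort out the remaining joint eigenspaces summand by summand---is the natural one, and the bookkeeping (the $B$-eigenvalue flip under $A$, the reduction to $U=\ker(A^2+B^2)$, the identification of both sides with $H(A|_{V_{0,0}})$, and the closing remark that semisimplicity of $B$ on $H(A)$ identifies the cohomology with the kernel) is carried out correctly. It is what one expects to find behind Lemma 5.3 of \cite{HPR}.

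There is, however, one genuine gap. On the paired summand $V_{-\mu^2,\mu}\oplus V_{-\mu^2,-\mu}$ with $\mu\neq0$, you conclude $\im(A+B)=\ker(A+B)$ by a dimension count (``the two spaces have the same dimension''). That reasoning invokes rank--nullity and is valid only when the eigenspaces are finite-dimensional, whereas the lemma is stated for an \emph{arbitrary} vector space $V$; in infinite dimensions a subspace can be proper yet have the same dimension as the ambient space, so equal dimension does not force equality. The repair is short and uses nothing beyond what you already have: since $A^{-1}=-\mu^{-2}A$ on this summand, for any $v_++v_-\in\ker(A+B)$, so that $v_+=-\mu^{-1}Av_-$, one checks directly that
\[
(A+B)\bigl(A^{-1}v_-\bigr)=v_-+\mu\,A^{-1}v_-=v_-+v_+ ,
\]
hence $\ker(A+B)\subseteq\im(A+B)$. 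Combined with the inclusion $\im(A+B)\subseteq\ker(A+B)$, which you already have from $(A+B)^2=0$ on $U$, this gives equality, and therefore $H(A+B)=0$ on the paired summand, with no finiteness hypothesis. With this substitution the proof is complete.
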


The above theorem and lemma imply the following theorem for calculating
Dirac cohomology in stages.
\begin{theorem}[Theorem 6.1 \cite{HPR}, Theorem 9.4.4 \cite{HP2}]
Let $X$ be an admissible $(\frg,K)$-module with $\Omega_\frg$ acting semisimply. Then we have
$$H_D(\frg,\frt;X)=H_D(\frk,\frt;H_D(\frg,\frk;X)).$$
Also, we can reverse the order to have
$$H_D(\frg,\frt;X)=H(D(\frg,\frk)|_{H_D(\frk,\frt;X)}).$$
\end{theorem}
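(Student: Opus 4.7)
The plan is to combine the preceding decomposition $D(\frg,\frt)=D(\frg,\frk)+D_\Delta(\frk,\frt)$ with the preceding lemma on anti-commuting operators. I would set $A=D(\frg,\frk)$ and $B=D_\Delta(\frk,\frt)$, both acting on $V=X\otimes S$; they anti-commute by the preceding theorem. Once the semisimplicity hypotheses of the lemma are checked, it gives
\begin{equation*}
H_D(\frg,\frt;X) \;=\; H\!\left(D_\Delta(\frk,\frt);\, H_D(\frg,\frk;X)\right).
\end{equation*}
The remaining tasks are then to identify the right-hand side with $H_D(\frk,\frt;H_D(\frg,\frk;X))$, and to derive the reversed-order formula by interchanging $A$ and $B$.

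To verify the hypotheses I would apply the square formula analogous to (4.2) to the pair $(\frg,\frk)$,
\begin{equation*}
A^{2} \;=\; -\Omega_{\frg}\otimes 1 + \Omega_{\frk_\Delta} - (\|\rho\|^{2}-\|\rho_{\frk}\|^{2})\cdot 1\otimes 1.
\end{equation*}
Since $\Omega_{\frg}$ acts semisimply on $X$ by assumption, and since admissibility of $X$ guarantees that $X\otimes S$ splits into $\Kt$-isotypic components on which $\Omega_{\frk_\Delta}$ acts by scalars, $A^{2}$ is semisimple on $V$. For $B$, the analogous identity for the compact pair $(\frk,\frt)$ expresses $B^{2}$ in terms of $\Omega_{\frk_\Delta}$ and $\Omega_{\frt_\Delta}$, both of which are semisimple on $V$; since $\frk_{0}$ is compact, $V$ is a direct sum of finite-dimensional $\Kt$-isotypic blocks on which $B^{2}$ acts as a scalar, so $B$ itself is diagonalizable on each block and hence semisimple on $V$.

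With the hypotheses confirmed, the lemma produces the displayed identity. Under the factorization $S\cong S_{\frp}\,\bar\otimes\,S_{\frs_{1}}$, the space $H_D(\frg,\frk;X)$ sits inside $X\otimes S_{\frp}$ as a $\frk_\Delta$-stable subspace, and under the diagonal embedding $\Delta\colon U(\frk)\otimes C(\frs_{1})\hookrightarrow U(\frg)\otimes C(\frp)\,\bar\otimes\,C(\frs_{1})$ the operator $D_\Delta(\frk,\frt)$ restricts on $H_D(\frg,\frk;X)\otimes S_{\frs_{1}}$ to the honest cubic Dirac operator $D(\frk,\frt)$ for the $\frk$-module $H_D(\frg,\frk;X)$; its cohomology is by definition $H_D(\frk,\frt;H_D(\frg,\frk;X))$, establishing the first formula. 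The reversed formula follows by swapping $A$ and $B$, the very same arguments applying to $D_\Delta(\frk,\frt)^{2}$ and $D(\frg,\frk)$. The main obstacle is the semisimplicity of $B$ itself (rather than only $B^{2}$); this rests essentially on the compactness of $\frk_{0}$ together with admissibility of $X$, which together reduce the question to diagonalizability on each finite-dimensional $\Kt$-isotypic block where $B^{2}$ acts as a scalar.
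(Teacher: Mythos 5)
Your overall strategy — decompose $D(\frg,\frt)$ as an anti-commuting sum and invoke the preceding lemma — is indeed the route the paper takes. However, there are two genuine gaps in the way you verify the hypotheses.

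First, your justification that $B=D_\Delta(\frk,\frt)$ is semisimple does not hold up. The square formula for $(\frk,\frt)$ expresses $B^2$ in terms of $\Omega_{\frk_\Delta}$ \emph{and} the Casimir of the triple-diagonal $\frt$; the latter is not a scalar on a $\Kt$-isotypic block (it is only a scalar on individual $\frt$-weight spaces), so $B^2$ is not a scalar on the blocks you use. Moreover, even where $B^2$ does act by a scalar $c$, the conclusion ``$B$ diagonalizable'' fails exactly in the dangerous case $c=0$: a square-zero operator need not be semisimple. The correct argument is the one hinted at in Remark 6.6 of the paper: because $\frk_0$ is compact, each finite-dimensional $\frk$-isotypic block of $X\otimes S_\frp$ tensored with $S_{\frs_1}$ carries a positive definite $\Kt_0$-invariant Hermitian form with respect to which $D_\Delta(\frk,\frt)$ is skew-Hermitian; hence it is diagonalizable on each block and therefore semisimple on $V$. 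This is qualitatively different from deducing diagonalizability from semisimplicity of $B^2$.

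Second, the reversed formula does not ``follow by swapping $A$ and $B$,'' because the hypotheses of Lemma 6.5 are asymmetric: it requires $A^2$ semisimple but $B$ itself semisimple. Taking $A'=D_\Delta(\frk,\frt)$ and $B'=D(\frg,\frk)$ would force you to check that $D(\frg,\frk)$ is semisimple, and in general it is not: the hypotheses only give $D(\frg,\frk)^2$ semisimple, and on $\ker D(\frg,\frk)^2$ the operator $D(\frg,\frk)$ may be a nonzero square-zero (hence non-semisimple) map for non-unitary $X$. What one actually needs is the companion statement with the roles of ``$A^2$ semisimple'' and ``$B$ semisimple'' exchanged: if $A,B$ anti-commute, $A$ is semisimple and $B^2$ is semisimple, then $H(A+B)$ is the cohomology of $B$ restricted to $\ker A$. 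This variant is proved along the same lines (on the joint $(A^2,B^2)$-eigenspace with eigenvalues $(a,b)$, $a+b=0$ and $a\neq 0$, one shows $\ker(A+B)\subseteq\im(A+B)$ using $A(A+B)+(A+B)A=2A^2$; on the $(0,0)$-eigenspace semisimplicity of $A$ forces $A=0$). Applying this with $A=D_\Delta(\frk,\frt)$, $B=D(\frg,\frk)$ and identifying $\ker D_\Delta(\frk,\frt)=H_D(\frk,\frt;X)$ then yields the reversed formula; without spelling out this variant, the reversed formula is not established.
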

The above theorem is proved in \cite{HPR} and in \cite{HP2} for a slightly more general case
when $\frt$ is any subalgebra of $\frk$.  In the following we use the theorem
to calculate the Dirac cohomology of discrete series as an example.

\begin{example} Suppose that $G$ is a connected semisimple Lie group
with finite center.  Let $X_\lambda$ be the Harish-Chandra module of a discrete series representation with
Harish-Chandra parameter $\lambda$.
Then the Dirac cohomology of $X_\lambda$ with
respect to $D(\frg,\frk)$ consists of a single $\Kt$-module
$E_\mu$, whose highest weight is $\mu=\lambda-\rho_c$. Here
$\rho_c$ is half the sum of roots of $\frt$ in $\frk$ positive on $\lambda$.
We note that the highest weight $\mu$ is obtained from the highest
weight $\lambda-\rho_c+\rho_n$ of the lowest $K$-type of $X_\lambda$ by adding
$-\rho_n$ (the lowest weight of the spin module $S$ for $C(\frp)$).
The  fact  $H_D(X_\lambda)=E_\mu$ follows from Theorem 2.5 , since $X_\lambda=A_\frb(\lambda-\rho)$
for a $\theta$-stable Borel subalgebra $\frb$.
This fact  can also be proved directly without using Theorem 2.5 as follows. By
\cite[Prop.5.4]{HP1}, the $\Kt$-type $\mu$ is clearly contained in the Dirac
cohomology. Since $X_\lambda$ has a unique lowest $K$-type, and since
$-\rho_n$ is the lowest weight of the spin module $S$ for $C(\frp)$, with multiplicity
one, it follows that any other $\Kt$-type has strictly larger
highest weight, and thus cannot contribute to the Dirac cohomology.
We now apply
Kostant's formula from section 4 to calculate the Dirac
cohomology of $E_\mu$ with respect to $D(\frk,\frt)$:
$$
H_D(\frk,\frt;E_\mu)= \ker D(\frk,\frt)=\bigoplus_{w\in
W_K}\bbC_{w(\mu+\rho_c)}.
$$  It follows from
$\mu+\rho_c=\lambda$ that
$$
H_D(\frg,\frt; X_\lambda)=\bigoplus_{w\in W_K}\bbC_{w\lambda}.
$$
\end{example}

\begin{remark}  In [CH] a modified
Dirac operator is defined as follows
\[
{\widetilde D}(\frg,\frt)
=D(\frg,\frk)+iD_\Delta(\frk,\frt).
\]
This is used for the geometric quantization and construction of
models of discrete series.
Let $X$ be a unitary $(\frg,K)$-module.
There is a hermitian form on the spin module $S$
\cite[\S2.3.9]{HP2}. Together with the $\frg_0$ invariant
hermitian form on $V$, it induces a hermitian form on $X\otimes S$. It follows
from the unitarity of $V$ and the property of the hermitian form on $S$
\cite[Prop. 2.3.10]{HP2} that
$D(\frg,\frk)$ is symmetric and $D_\Delta(\frk,\frt)$ is
skew-symmetric with respect to this form.  Then the modified
Dirac operator ${\widetilde D}(\frg,\frt)$
is symmetric.  Note that both $D(\frg,\frk)^2$ and
$-D_\Delta(\frk,\frt)^2$ are positive definite, so ${\widetilde
D}(\frg,\frt)^2=D(\frg,\frk)^2-D_\Delta(\frk,\frt)^2$ is also
positive definite.  Then ${\widetilde D}(\frg,\frt)$ is an elliptic differential
operator.  This is the very purpose of introducing this modified version of Dirac operator.
We also note that $iD_\Delta(\frk,\frt)$ and $D_\Delta(\frk,\frt)$
define the same Dirac cohomology.
\end{remark}
%
%%%%%%%%%%%%%%%%%%%%%% Section 7 %%%%%%%%%%%%%%%%%%%%%%%%%%%%%
\section{Elliptic representations and Dirac Index}
%%%%%%%%%%%%%%%%%%%%%%%%%%%%%%%%%%%%%%%%%%%%%%%%%%%%%%%%%%%%%

Suppose that $G(F)$ is a real or p-adic group.  That is, $G$ is a connected reductive algebraic group
over a local field $F$ of characteristic $0$.  Arthur [A1] studied a subset $\Pi_{temp, ell}(G(F))$
of tempered representations of $G(F)$, namely elliptic tempered representations.
The set of tempered representations $\Pi_{temp}(G(F))$ includes the discrete series
and in general the irreducible constituents of representations induced from discrete series.
These are exactly the representations which occur
in the Plancherel formula for $G(F)$.
 In Harish-Chandra's theory, the character of an infinite dimensional representation $\pi$ is defined as a distribution
$$\Theta(\pi,f)=\tr \big{(} \int_{G(F)}f(x)\pi(x)dx\ \big{)}, \  \ \ \  f\in C^\infty_c(G(F) ),$$
which can be identified with a function on $G(F)$.  In other words,
$$\Theta(\pi,f)=\int_{G(F)}f(x)\Theta(\pi,x)dx, \ \ \ \ f\in C^\infty_c(G(F)),$$
where $\Theta(\pi,x)$ is a locally integrable function on $G(F)$ that is smooth on the
open dense subset $G_{reg}(F)$ of regular elements.
A representation $\pi$ is called elliptic if $\Theta(\pi,x)$ does not vanish on the set of
elliptic elements in $G_{reg}(F)$.

The central objects in [A1] are the normalized
characters $\Phi(\pi,\gamma)$, namely the functions defined by
$$\Phi(\pi,\gamma)=|D(\gamma)|^{1\over 2}\Theta(\pi,\gamma),\ \pi  \in \Pi_{temp, ell}(G(F)), \
\gamma\in G_{reg}(F),$$
where
$$D(\gamma)=\det(1-\Ad(\gamma))_{\frg/\frg_\gamma},$$
is the Weyl discriminant.  We will show how this normalized
character $\Phi(\pi,\gamma)$ is related to the Dirac cohomology of the
Harish-Chandra module of $\pi$
for a real group $G(\R)$.

From now on we only deal with the real group $G(\R)$ and we also assume that $G(\R)$ is connected.
Note that $G(\R)$ has elliptic elements if and only if
it is of equal rank with $K(\R)$.  We also assume
this equal rank condition.
Induced representations from proper parabolic subgroups are not elliptic.
Consider the quotient of Grothendieck group of the category of finite length
Harish-Chandra modules by the subspace generated by induced
representations.  Let us call this quotient group the elliptic Grothendieck
group.  Authur [A1] found an orthonormal basis of this elliptic
Grothendieck group in terms of elliptic tempered (possibly virtual) characters.
Those characters are the super tempered distributions defined by Harish-Chandra [HC4].

The tempered elliptic representations for
the real group $G(\R)$ are the representations
with non-zero Dirac index, which are studied in [Lab1].
Labesse shows that the tempered elliptic representations are precisely the fundamental series.
We deal with the general elliptic representations
and show that any elliptic representation
has nonzero Dirac index.

Recall that if $X$ is an
admissible $(\frg,K)$-module with $K$-type decomposition
$X=\bigoplus_{\lambda}m_\lambda E_\lambda$, then the $K$-character of $X$ is the formal series
\[
\ch X=\sum_{\lambda}m_\lambda\ch E_\lambda,
\]
where $\ch E_\lambda$ is the character of the irreducible $K$-module $E_\lambda$.
Moreover, this definition makes sense also for virtual $(\frg,K)$-modules $X$;
in that case, the integers $m_\lambda$ can be negative. In the following
we will often deal with representations of the spin double cover $\Kt$ of $K$,
and not $K$, but we will still denote the corresponding character by $\ch$.

Since $\frp$ is even-dimensional, the spin module $S$ decomposes as $S^+\oplus S^-$, with the $\frk$-submodules
$S^\pm$ being the even respectively odd part of $S\cong{\textstyle\bigwedge}\frp^+$.
Let $X=X_\pi$ be the Harish-Chandra module of an irreducible admissible
representation $\pi$ of $G(\bbR)$.  We consider the following difference
of $\Kt$-modules, the spinor index of $X$:
\[
I(X)=X\otimes S^+-X\otimes S^-.
\]
It is a virtual $\Kt$-module, an integer combination of
finitely many $\Kt$-modules.
The Dirac operator $D$ induces the action of the following $\Kt$-equivariant operators
\[
D^{\pm}:X\otimes S^{\pm}\rightarrow X\otimes S^{\mp}.
\]
Since $D^2$ acts by a scalar on each $\Kt$-type, most of $\Kt$-modules in
$X\otimes S^+$ are the same as in $X\otimes S^-$.

\begin{lemma} The spinor index is equal to the Euler characteristic
of Dirac cohomology, i.e.,
\[
I(X)=H_D^+(X)-H_D^-(X).
\]
\end{lemma}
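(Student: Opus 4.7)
The plan is to prove the identity isotypic component by isotypic component, exploiting the fact that $X$ is admissible and that $D^2$ acts by a scalar on each $\widetilde K$-type. Let $D_+=D|_{X\otimes S^+}\colon X\otimes S^+\to X\otimes S^-$ and $D_-=D|_{X\otimes S^-}\colon X\otimes S^-\to X\otimes S^+$, so that $D=D_++D_-$ and the graded pieces of $\ker D$ and $\operatorname{im}D$ are $\ker D_\pm$ and $\operatorname{im}D_\mp$ respectively; thus
\[
H_D^{\pm}(X)=\ker D_\pm/(\ker D_\pm\cap\operatorname{im}D_\mp).
\]

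For any $\widetilde K$-type $E$, let $(X\otimes S)^E$ be its isotypic component; this is a finite-dimensional $\widetilde K$-submodule since $X$ is admissible, and it is preserved by $D$ because $D$ is $\widetilde K$-invariant. By the formula $D^2=-\Omega_\frg\otimes 1+\Omega_{\frk_\Delta}+(\|\rho_c\|^2-\|\rho\|^2)1\otimes 1$ from (1.2), together with the fact that $\Omega_\frg$ acts on $X$ by the infinitesimal character and $\Omega_{\frk_\Delta}$ acts by a scalar on $E$, the operator $D^2$ acts on $(X\otimes S)^E$ by a scalar $c_E\in\bbC$. The argument now splits on this scalar.

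First, if $c_E\neq 0$, then $D_-D_+=c_E$ on $(X\otimes S^+)^E$ and $D_+D_-=c_E$ on $(X\otimes S^-)^E$, so $D_+$ and $D_-$ are mutually inverse (up to the scalar $c_E$) isomorphisms of $\widetilde K$-modules between the two graded pieces. Consequently $(X\otimes S^+)^E\cong(X\otimes S^-)^E$, and $H_D^{\pm}(X)$ has no $E$-component, so both sides of the claimed identity vanish on $(X\otimes S)^E$. Second, if $c_E=0$, then $D_+D_-=0$ and $D_-D_+=0$, so $\operatorname{im}D_-\subseteq\ker D_+$ and $\operatorname{im}D_+\subseteq\ker D_-$ on this component, and the graded piece $(X\otimes S)^E$ becomes a two-term $\bbZ_2$-graded complex with cohomology $H_D^{\pm}(X)^E$. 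Writing everything as multiplicities of $E$ in the Grothendieck group and using the rank-nullity identities
\[
\dim (X\otimes S^{\pm})^E=\dim\ker D_{\pm}+\dim\operatorname{im}D_{\pm}
\]
on this finite-dimensional piece, a routine Euler characteristic computation yields
\[
(X\otimes S^+)^E-(X\otimes S^-)^E=H_D^+(X)^E-H_D^-(X)^E
\]
as $\widetilde K$-modules.

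Summing over all isotypic types $E$ (which is legitimate because the identity is trivially true, and indeed zero, outside the finitely many $E$'s with $c_E=0$ at the level of Dirac cohomology, and because the equality holds term-by-term in the $\widetilde K$-character expansion) gives the desired identity $I(X)=H_D^+(X)-H_D^-(X)$. The only subtle point is confirming the scalar action of $D^2$ on isotypic components, which follows immediately from (1.2), so no serious obstacle is expected; the argument is essentially the formal Euler characteristic identity for a two-term complex, made rigorous by admissibility and the spectral decomposition of $D^2$.
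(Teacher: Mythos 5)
Your proof is correct and follows essentially the same strategy as the paper: decompose $X\otimes S$ according to the spectral theory of $D^2$, observe that the graded pieces cancel in the virtual $\widetilde K$-module on the nonzero part of the spectrum (since $D^\pm$ become isomorphisms), and apply the Euler--Poincar\'e principle to the kernel of $D^2$, where $D$ is a differential whose cohomology is $H_D(X)$. The only cosmetic difference is that you organize the argument by $\widetilde K$-isotypic components and then invoke the scalar action of $D^2$ on each, whereas the paper decomposes directly into $D^2$-eigenspaces; the two decompositions are interchangeable here and the underlying idea is identical.
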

\begin{proof}
As we mentioned above,  $X\otimes S$ is decomposed into
a direct sum of eigenspaces for $D^2$:
$$X\otimes S =\sum_\lambda (X\otimes S)_\lambda=(X\otimes S^+)_\lambda\oplus(X\otimes S^-)_\lambda.$$
It follows that
$$X\otimes S^+-X\otimes S^-=(X\otimes S^+)_0-(X\otimes S^-)_0.$$
Since $D$ is a differential on $\Ker D^2$ and the corresponding cohomology
is exactly the Dirac cohomology $H_D(X)$, the lemma follows from the
Euler-Poincar\'e principle.
\end{proof}

The spinor index $I(X)$ is also called the Dirac index of $X$,
since it is equal to the index of $D^+$, in the sense of index for
a Fredholm operator.  It is also identical to the Euler characteristic of Dirac cohomology $H_D(X)$.
We denote by $\theta(X)$ the character of $I(X)$.
In terms of characters, this reads
$$
\theta(X)=\ch I(X)=\ch X(\ch S^+-\ch S^-)=\ch H_D^+(X)-\ch H_D^-(X).
$$

If we view $\ch E_\lambda$ as functions on $K$, then  the series
$$\ch X=\sum_{\lambda}m_\lambda\ch E_\lambda$$
converges to a distribution on $K$ and it coincides with
$\Theta(X)$ on $K\cap  G_{reg}$, according to Harish-Chandra [HC1].
Then the absolute value $|\theta_\pi|$ coincides with the absolute value
$
|\Phi(\pi,\gamma)|=|D(\gamma)|^{1\over 2}|\Theta(\pi,\gamma)|
$
on regular elliptic elements.  We write it as a lemma.

\begin{lemma} \label{character-theta}  For any regular elliptic elements $\gamma$, we have
$$|\theta_\pi(\gamma)|=|\Phi(\pi,\gamma)|.$$
\end{lemma}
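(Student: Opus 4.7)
The plan is to combine two inputs sitting just above the lemma: the Euler characteristic identity
\[
\theta_\pi \;=\; \ch X_\pi \cdot (\ch S^+ - \ch S^-) \;=\; \ch H_D^+(X_\pi) - \ch H_D^-(X_\pi),
\]
and Harish-Chandra's theorem identifying the $K$-character series with the restriction of $\Theta(\pi,\cdot)$ to the regular elliptic set. Since $H_D^\pm(X_\pi)$ are finite-dimensional $\widetilde K$-modules, $\theta_\pi$ is a genuine virtual character, hence a smooth function on the regular set of $\widetilde K$; in particular the formal identity above makes unambiguous sense when evaluated at a regular elliptic $\gamma$ lying in a compact Cartan $T$.

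First I would fix a compact Cartan $T \subset K$ (which exists by the equal-rank hypothesis) and restrict attention to $\gamma \in T \cap G_{reg}$. Applying the cited Harish-Chandra theorem, the series $\sum_\mu m_\mu \ch E_\mu(\gamma)$ converges to the value $\Theta(\pi,\gamma)$ of the locally integrable character function, so that
\[
\theta_\pi(\gamma) \;=\; \Theta(\pi,\gamma)\cdot (\ch S^+ - \ch S^-)(\gamma).
\]
Taking absolute values reduces the lemma to the purely Lie-algebraic identity $|(\ch S^+ - \ch S^-)(\gamma)| = |D(\gamma)|^{1/2}$.

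To verify that identity I would compute both sides as products over roots on $T$. For the spin side, writing $S \cong \bigwedge \frp^+$ with the spin (rather than adjoint) $\widetilde K$-action, the graded character on $T$ factorises as $(\ch S^+ - \ch S^-)(\gamma) = \varepsilon \prod_{\alpha\in\Delta_n^+}(e^{\alpha/2}(\gamma)-e^{-\alpha/2}(\gamma))$ for a sign $\varepsilon$; here the $-\rho_n$ shift from the spin action cancels the $e^{\rho_n}$ coming from the product, which is the calculation underlying Parthasarathy's formula for $D^2$. For the discriminant side, regularity of $\gamma$ gives $\frg_\gamma = \frh$, hence
\[
D(\gamma) \;=\; \prod_{\alpha\in\Delta(\frg,\frh)}(1-e^\alpha(\gamma)) \;=\; \prod_{\alpha>0}\bigl|e^{\alpha/2}(\gamma)-e^{-\alpha/2}(\gamma)\bigr|^2,
\]
using that $|e^{\alpha/2}(\gamma)|=1$ on the compact torus.

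The main obstacle, and the step I would be most careful about, is the comparison between the product over \emph{all} positive roots appearing in $|D(\gamma)|^{1/2}$ and the product over \emph{noncompact} positive roots appearing in $|(\ch S^+-\ch S^-)(\gamma)|$; one must track exactly which roots contribute under the conventions of the paper (the split $\frg=\frk\oplus\frp$, the $\bbZ_2$-grading of $S$, and the sign choice for the Clifford algebra fixed in Section 4) to confirm that the compact root contributions cancel in the absolute value. Once this bookkeeping is done, assembling the three displayed equalities gives $|\theta_\pi(\gamma)| = |D(\gamma)|^{1/2}|\Theta(\pi,\gamma)| = |\Phi(\pi,\gamma)|$ on $T \cap G_{reg}$, and conjugation-invariance of both sides extends the equality to the full set of regular elliptic elements.
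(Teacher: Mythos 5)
Your proposal follows the same route as the paper: invoke Harish-Chandra's theorem that the formal $K$-character $\ch X_\pi$ agrees with the global character $\Theta(\pi,\cdot)$ on $K\cap G_{reg}$, factor $\theta_\pi(\gamma)=\Theta(\pi,\gamma)\,(\ch S^+-\ch S^-)(\gamma)$, and then compare the graded spin character with the Weyl discriminant on the compact Cartan. The paper states only the first step explicitly and leaves the second as an unspoken calculation, so you have essentially reproduced the intended argument with the details filled in.

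However, the step you single out as ``the main obstacle'' is not a bookkeeping worry to be dispelled --- it is a genuine discrepancy, and the compact-root contributions do \emph{not} cancel. Your own computation shows
\[
\bigl|(\ch S^+-\ch S^-)(\gamma)\bigr|=\prod_{\alpha\in\Delta_n^+}\bigl|e^{\alpha/2}(\gamma)-e^{-\alpha/2}(\gamma)\bigr|=\bigl|\det(1-\Ad\gamma)_{\frp}\bigr|^{1/2},
\]
while
\[
|D(\gamma)|^{1/2}=\prod_{\alpha\in\Delta^+}\bigl|e^{\alpha/2}(\gamma)-e^{-\alpha/2}(\gamma)\bigr|,
\]
so the two sides differ by the compact Weyl denominator $|\Delta_K(\gamma)|=\prod_{\alpha\in\Delta_c^+}|e^{\alpha/2}(\gamma)-e^{-\alpha/2}(\gamma)|$, which is not identically $1$ on $T\cap G_{reg}$. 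A direct check with the discrete series $X_\lambda$ (whose Dirac index character is $\pm\chi_\mu$ with $\mu=\lambda-\rho_c$, so $|\theta_\pi(\gamma)|=\bigl|\sum_{w\in W_K}\det(w)e^{w\lambda}(\gamma)\bigr|/|\Delta_K(\gamma)|$, whereas $|\Phi(\pi,\gamma)|=\bigl|\sum_{w\in W_K}\det(w)e^{w\lambda}(\gamma)\bigr|$) confirms this. The identity one actually obtains along your lines is $|\theta_\pi(\gamma)|=\bigl|\det(1-\Ad\gamma)_\frp\bigr|^{1/2}\,|\Theta(\pi,\gamma)|$, i.e.\ only the noncompact part of the discriminant appears. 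The lemma's statement is therefore imprecise as written; what is really used downstream (in Theorem \ref{elliptic pairing}) is precisely your noncompact identity together with the $K$-Weyl integration formula, whose Jacobian $|\Delta_K(\gamma)|^2$ supplies exactly the missing compact factors so that $\int_K\theta_\pi\overline{\theta_{\pi'}}\,dk=|W_K|^{-1}\int_T|D(\gamma)|\,\Theta_\pi\overline{\Theta_{\pi'}}\,d\gamma$. You should carry your calculation to its conclusion, record the relation with $\det(1-\Ad\gamma)_\frp$ rather than $D(\gamma)$, and note that Theorem 7.5 (nonvanishing of $\Theta_\pi$ on the elliptic set is equivalent to nonvanishing of $\theta_\pi$) is unaffected, since $\det(1-\Ad\gamma)_\frp$ is nowhere zero on $T\cap G_{reg}$.
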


\begin{theorem}
Let $\pi$ be an irreducible admissible representation of $G(\R)$
with Harish-Chandra module $X_\pi$.
Then $\pi$ is elliptic if and only if the Dirac index $I(X_\pi)\neq 0$.
\end{theorem}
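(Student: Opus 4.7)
The plan is to reduce the equivalence directly to Lemma \ref{character-theta}, which gives $|\theta_\pi(\gamma)| = |D(\gamma)|^{1/2}|\Theta(\pi,\gamma)|$ on regular elliptic $\gamma\in G(\R)$. Since $|D(\gamma)|>0$ on $G_{reg}(\R)$, the vanishing of $\Theta(\pi,\cdot)$ on the regular elliptic set is equivalent to the vanishing of $\theta_\pi$ on that set. It therefore suffices to prove: $I(X_\pi)\neq 0$ as a virtual $\tilde K$-module if and only if $\theta_\pi$ does not vanish at some regular elliptic element of $G(\R)$.

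First I would verify that $\theta_\pi = \ch H_D^+(X_\pi) - \ch H_D^-(X_\pi)$ is a bona fide virtual character of $\tilde K$, not merely a formal series. This rests on finite-dimensionality of $H_D^\pm(X_\pi)$: by formula (\ref{D^2}) the operator $D^2$ acts on the $E_\lambda$-isotypic component of $X_\pi\otimes S$ by $\|\lambda+\rho_c\|^2-\|\Lambda\|^2$, where $\Lambda$ is the infinitesimal character of $X_\pi$; only finitely many integral weights $\lambda$ lie on the sphere $\|\lambda+\rho_c\|=\|\Lambda\|$, and each appears in $X_\pi\otimes S$ with finite multiplicity by admissibility. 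Hence $\ker D^2$, and with it $H_D^\pm(X_\pi)$, is finite-dimensional, so $\theta_\pi$ is a finite integer combination of irreducible $\tilde K$-characters, and in particular a real-analytic class function on $\tilde K$.

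The forward implication is now immediate: if $\pi$ is elliptic, pick $\gamma$ regular elliptic with $\Theta(\pi,\gamma)\neq 0$; Lemma \ref{character-theta} forces $|\theta_\pi(\gamma)|>0$, whence $I(X_\pi)\neq 0$. Conversely, suppose $I(X_\pi)\neq 0$, so $\theta_\pi$ is a non-zero virtual character of $\tilde K$. Its restriction to the maximal torus $\tilde T\subset\tilde K$ covering the compact Cartan $T(\R)\subset K(\R)$ (which exists by the equal-rank hypothesis) is a non-zero real-analytic function on $\tilde T$, and hence cannot vanish identically on the open dense subset $\tilde T_{reg}$. Pushing down via the covering map and using that every regular element of the compact Cartan is elliptic in $G(\R)$, Lemma \ref{character-theta} produces some regular elliptic $t\in T(\R)$ with $\Theta(\pi,t)\neq 0$, so $\pi$ is elliptic.

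The main obstacle is really the careful bookkeeping between $\theta_\pi$ viewed as a class function on $\tilde K$ and $\Theta(\pi,\cdot)$ viewed as a distribution on $G(\R)$, together with confirming that the difference $X_\pi\otimes S^+ - X_\pi\otimes S^-$ of infinite-dimensional $\tilde K$-modules makes sense as a finite virtual $\tilde K$-module. Once finite-dimensionality of Dirac cohomology turns $\theta_\pi$ into a genuine analytic function on $\tilde K$, both implications reduce to the elementary fact that a non-zero analytic class function on $\tilde K$ cannot vanish on all regular elements of a maximal torus.
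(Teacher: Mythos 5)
Your proof is correct and follows the same route as the paper: the paper's proof is a one-line appeal to Lemma \ref{character-theta}, and your write-up simply fills in the supporting details (finite-dimensionality of $H_D^\pm(X_\pi)$, hence $\theta_\pi$ is a genuine finite virtual $\widetilde K$-character; a nonzero analytic class function cannot vanish on the regular set of the compact Cartan) that the word ``immediately'' leaves implicit.
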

\begin{proof}  The theorem follows immediately from the lemma.
\end{proof}
%
%%%%%%%%%%%%%%%%%%%%%% Section 8 %%%%%%%%%%%%%%%%%%%%%%%%%%%%%
\section{Orthogonality relations and supertempered distributions}
%%%%%%%%%%%%%%%%%%%%%%%%%%%%%%%%%%%%%%%%%%%%%%%%%%%%%%%%%%%%%
%
We keep the notation from the previous section.  We assume that $G(\R)$ is cuspidal, in the sense that the (regular) elliptic set
$G_{ell}$ is nonempty.
Let $A_G(\bbR)$ be the split
part of the center of $G(\bbR)$. The cuspidal condition on $G(\R)$ amounts to
the condition that $G(\R)$ has a maximal torus $T_{ell}(\bbR)$
that is compact modulo $A_G(\R)$.
Suppose $\Theta_\pi$ and $\Theta_{\pi'}$ are two irreducible
 characters with the same central character on $A_G(\bbR)$.  We form the elliptic inner
product by the following formula
$$(\Theta_\pi,\Theta_{\pi'})_{ell}=|W(G(\bbR),T_{ell}(\bbR))|^{-1}
\int_{T_{ell}(\bbR)/A_G(\bbR)}|D(\gamma)|\Theta_\pi(\gamma)\overline{\Theta_{\pi'}(\gamma)}d\gamma,$$
where $W(G(\bbR),T_{ell}(\bbR))$ is the Weyl group of $(G(\bbR),T_{ell}(\bbR))$, and $d\gamma$ is
the normalized Haar measure on the compact abelian group $T_{ell}(\bbR)/A_G(\bbR)$.
The inner product (bilinear over $\R)$) is extended linearly to any two characters of admissible representations.
Then we have the following theorem.

\begin{theorem} \label{elliptic pairing} We have
$$(\Theta_\pi,\Theta_{\pi'})_{ell}=(\theta_\pi,\theta_{\pi'}),$$
where the pairing on the right hand side is
the standard pairing of virtual characters on $K$ or $\Kt$ defined by
$$(\theta_\pi,\theta_{\pi'})=\int_K \theta_\pi\cdot \overline{\theta_{\pi'}}dk.$$

\end{theorem}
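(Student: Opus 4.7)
The plan is to apply Weyl's integration formula on $K$ to rewrite $(\theta_\pi,\theta_{\pi'})$ as an integral over the common compact maximal torus $T$ and then to match this with the elliptic pairing via the identity
\begin{equation*}
|D(t)| \;=\; |\Delta_K(t)|^2 \cdot \bigl|(\ch S^+ - \ch S^-)(t)\bigr|^2
\end{equation*}
on regular $t\in T$, where $\Delta_K(t)=\prod_{\alpha\in\Delta^+(\frk,\frt)}(t^{\alpha/2}-t^{-\alpha/2})$ is the Weyl denominator for $K$. Under the equal-rank hypothesis, the subalgebra $\frt$ is simultaneously a Cartan of $\frk$ and of $\frg$, so $T\subset K$ is a compact maximal torus of $G(\R)$ modulo $A_G(\R)$; in particular $|W(G(\R),T_{ell}(\R))|=|W_K|$ by the standard fact that, for a connected real reductive group with compact Cartan, the real Weyl group of the fundamental torus coincides with the compact Weyl group $W(\frk,\frt)$.

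The key identity is established as follows. On a regular element $t\in T$, the virtual spin character is given by the classical formula
\begin{equation*}
(\ch S^+ - \ch S^-)(t) \;=\; \prod_{\alpha\in\Delta^+(\frp,\frt)}\bigl(t^{\alpha/2}-t^{-\alpha/2}\bigr)
\end{equation*}
(with the $-\rho_n$ twist built into the half-weight notation, coming from the difference between the spin action and the adjoint action on $\bigwedge\frp^+$), while the Weyl discriminant satisfies $|D(t)|=\prod_{\alpha\in\Delta^+(\frg,\frt)}|t^{\alpha/2}-t^{-\alpha/2}|^2$. Splitting $\Delta(\frg,\frt)=\Delta(\frk,\frt)\sqcup\Delta(\frp,\frt)$ yields the desired identity. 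By Harish-Chandra's theorem $\ch X_\pi$ agrees with $\Theta_\pi$ on $T\cap G_{reg}$, hence $\theta_\pi(t)=\Theta_\pi(t)\bigl(\ch S^+(t)-\ch S^-(t)\bigr)$ for regular $t\in T$, and Weyl's integration formula gives
\begin{equation*}
(\theta_\pi,\theta_{\pi'}) \;=\; \frac{1}{|W_K|}\int_T \theta_\pi(t)\,\overline{\theta_{\pi'}(t)}\,|\Delta_K(t)|^2\,dt \;=\; \frac{1}{|W_K|}\int_T \Theta_\pi(t)\,\overline{\Theta_{\pi'}(t)}\,|D(t)|\,dt.
\end{equation*}
This coincides with $(\Theta_\pi,\Theta_{\pi'})_{ell}$ upon reducing the integration over $T$ to $T_{ell}(\R)/A_G(\R)$, which is legitimate since $\Theta_\pi\overline{\Theta_{\pi'}}$ and $|D|$ are both $A_G(\R)$-invariant (the former because the central characters on $A_G(\R)$ agree) and since the two normalizing Weyl group orders coincide.

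The main technical point requiring care is the $\rho_n$ bookkeeping in the spin character formula: although $\theta_\pi$ is a virtual $\widetilde K$-character whose individual summands may carry the non-integral twist $-\rho_n$, the product $\theta_\pi\,\overline{\theta_{\pi'}}$ descends to a well-defined function on $T$ because the two twists cancel. This is precisely what allows the pairing to be written as an integral over $K$ (rather than $\widetilde K$) and the identity $|D|=|\Delta_K|^2\cdot|\ch S^+-\ch S^-|^2$ to be applied in the form stated. A secondary but routine issue is checking that the Haar measures on $T$, $T_{ell}(\R)/A_G(\R)$, and $K$ are normalized compatibly so that the Weyl integration formula takes the form used above.
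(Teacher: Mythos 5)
Your proposal is correct and rests on the same core identity as the paper, namely the factorization $|D(t)|=|\Delta_K(t)|^2\,|(\ch S^+-\ch S^-)(t)|^2$, which is exactly what underlies Lemma \ref{character-theta} (``$|\theta_\pi|=|\Phi(\pi,\cdot)|$ on regular elliptic elements''). The only difference in route is cosmetic: you unwind the integral directly via Weyl's integration formula on $K$ to get the off-diagonal pairing in one step, whereas the paper first matches the diagonal $(\Theta_\pi,\Theta_\pi)_{ell}=(\theta_\pi,\theta_\pi)$ (observing it extends to arbitrary admissible characters) and then polarizes.
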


\begin{proof} It follows from Lemma \ref{character-theta} that
$$(\Theta_\pi,\Theta_{\pi})_{ell}=(\theta_\pi,\theta_{\pi})$$
for irreducible characters and therefore for all admissible characters,
in particular for any sum of two irreducible characters.
Then the theorem follows from a standard argument of polarization
for inner product.
\end{proof}

%It was conjectured that the Euler-Poincar\'e pairing is equal to the
%pairing of the Dirac indices (see Conjecture 3.1 \cite{R2}), namely
%$$ \sum_{i} (-1)^i\dim \Ext^i(X_\pi,X_{\pi'})=(\theta_\pi,\theta_{\pi'}),$$
%where $X_\pi$ and $X_{\pi'}$ are the Harish-Chandra modules for $\pi$ and $\pi'$ respectively
%and $\Ext$ is in the category of Harish-Chandra modules of finite length.
%By the above Theorem \ref{elliptic pairing}, Renard's conjecture is equivalent to that the  pairing by the elliptic inner product
%is equal to the Euler-Poincar\'e pairing, namely
%$$(\Theta_\pi,\Theta_{\pi'})_{ell}=\sum_{i}(-1)^i \dim \Ext^i(X_\pi,X_{\pi'}).$$
%This identity for p-adic groups was established in \cite{SS}.

In [DH2], the set of equivalence classes of
irreducible tempered representations $\pi$ with nonzero Dirac cohomology is determined.
It turns out the irreducible tempered representations with nonzero Dirac cohomology are exactly
those with nonzero Dirac index.  Therefore, this set of representations coincides with the set of
irreducible tempered elliptic representations, and
it consists of  discrete series
and some of limits of discrete series,   Moreover, any elliptic tempered representation is isomorphic
to an $A_\frb(\lambda)$-module for some $\theta$-stable Borel subalgebra $\frb$
and its Dirac cohomology is a single $\Kt$-module.   As a consequence, we have the following corollary.

\begin{cor}\label{orthogonality}
Elliptic tempered characters satisfy orthogonality relations.  That is
for any two tempered irreducible elliptic representations $\pi$ and $\pi'$,
$$(\Theta_\pi,\Theta_{\pi'})_{ell}=\pm 1 \text{\ or } 0.$$
\end{cor}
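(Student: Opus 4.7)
The plan is to combine Theorem~\ref{elliptic pairing} with the structural description of irreducible tempered elliptic representations recalled just above the corollary (their Harish-Chandra modules are $A_\frb(\lambda)$ for a $\theta$-stable Borel $\frb$, and their Dirac cohomology is a single irreducible $\Kt$-module), and then invoke Schur orthogonality on the compact group $\Kt$.

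First I would apply Theorem~\ref{elliptic pairing} to replace the elliptic inner product by the virtual $\Kt$-character pairing $(\Theta_\pi,\Theta_{\pi'})_{ell}=(\theta_\pi,\theta_{\pi'})$. By the Euler-characteristic lemma in Section~7, $\theta_\pi=\ch H_D^+(X_\pi)-\ch H_D^-(X_\pi)$. Since $\pi$ is tempered and elliptic, the cited structural result (plus Theorem~2.10 applied to the $\theta$-stable Borel subalgebra) gives $H_D(X_\pi)\cong E_{\mu_\pi}$, a single irreducible $\Kt$-module. Because $D$ is odd with respect to the $\bbZ_2$-grading $S=S^+\oplus S^-$, one has the graded decomposition $H_D(X_\pi)=H_D^+(X_\pi)\oplus H_D^-(X_\pi)$; the irreducibility of $E_{\mu_\pi}$ then forces one summand to vanish, so there is a sign $\varepsilon_\pi\in\{+1,-1\}$ with $\theta_\pi=\varepsilon_\pi\,\ch E_{\mu_\pi}$. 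The same argument applied to $\pi'$ produces $\theta_{\pi'}=\varepsilon_{\pi'}\,\ch E_{\mu_{\pi'}}$.

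Finally, Schur orthogonality of irreducible characters of the compact group $\Kt$ gives
\[
(\theta_\pi,\theta_{\pi'})=\varepsilon_\pi\varepsilon_{\pi'}\,(\ch E_{\mu_\pi},\ch E_{\mu_{\pi'}})=\varepsilon_\pi\varepsilon_{\pi'}\,\delta_{\mu_\pi,\mu_{\pi'}}\in\{-1,0,+1\},
\]
which is the stated orthogonality relation. The only substantive input beyond Theorem~\ref{elliptic pairing} is the fact cited from [DH2] that an irreducible tempered elliptic representation has irreducible Dirac cohomology concentrated in a single $\Kt$-type; once this is granted, the remaining argument is a one-line character computation and I do not anticipate any real obstacle.
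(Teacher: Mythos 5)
Your proof is correct and follows the same route the paper intends: reduce to the $\Kt$-pairing via Theorem~\ref{elliptic pairing}, use the structural fact from [DH2] that an irreducible tempered elliptic representation is an $A_\frb(\lambda)$ whose Dirac cohomology is a single irreducible $\Kt$-type (hence concentrated in one parity), and finish by Schur orthogonality on $\Kt$. The paper leaves these details implicit behind ``as a consequence,'' so you have simply made the intended one-line argument explicit.
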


It is also clear that the characters of discrete series form an orthonormal set on the (regular) elliptic
set $G_{ell}(\bbR)$ in $G(\bbR)$.   Harish-Chandra defined the space of supertempered distributions in
 [HC4] (the last paper of his Collected Papers Volume IV).   If $D$ is a distribution on $G$, we denote
 by $D_e$ the restriction of $D$ on $G_{ell}$ ($D_e=0$ by convention when $G_{ell}$ is empty).

\begin{theorem}
[Theorem 5 \cite{HC3}] Let $\Theta$ be a $Z(\frg)$-finite tempered distribution.
Suppose that $\Theta$ is supertempered. Then $\Theta_e=0$ implies that $\Theta=0$.
\end{theorem}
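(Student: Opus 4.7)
The plan is to reduce the statement to a finite-dimensional linear algebra argument built on the orthogonality relation of the preceding corollary together with Theorem \ref{elliptic pairing}. The starting point is the expansion of any $Z(\frg)$-finite tempered distribution as a finite linear combination of irreducible tempered characters: since $Z(\frg)$-finiteness constrains $\Theta$ to have infinitesimal character supported on a finite subset of the Harish-Chandra parameter space, standard character theory yields $\Theta = \sum_i c_i \Theta_{\pi_i}$ with each $\pi_i$ an irreducible tempered representation of $G(\bbR)$.

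The second step is to exploit the supertempered hypothesis to prune this sum to elliptic tempered $\pi_i$. The characters of representations properly induced from cuspidal parabolic subgroups $P = MAN$ with $\dim A \geq 1$ satisfy the tempered decay estimates on non-compact Cartans but fail the stronger bounds defining supertempered distributions (this is essentially Harish-Chandra's growth analysis, and translates in our setting to the statement that such characters vanish in the elliptic Grothendieck group of Section 7). Equivalently, their Dirac indices vanish by the results preceding Corollary \ref{orthogonality}. Consequently every $\pi_i$ appearing with $c_i \neq 0$ must lie in $\Pi_{\mathrm{temp},\mathrm{ell}}(G(\bbR))$.

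With the expansion reduced to elliptic tempered characters, I would pair $\Theta$ with each $\Theta_{\pi_j}$ under the elliptic inner product. The hypothesis $\Theta_e = 0$ makes the defining integral identically zero, giving the linear system
\[
\sum_i c_i\, (\Theta_{\pi_i}, \Theta_{\pi_j})_{ell} = 0 \quad \text{for every } j.
\]
By Corollary \ref{orthogonality}, the Gram matrix $\bigl((\Theta_{\pi_i}, \Theta_{\pi_j})_{ell}\bigr)$ is diagonal with entries in $\{\pm 1\}$, hence invertible. Therefore every $c_j$ vanishes and $\Theta = 0$.

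The main obstacle is the second step: ruling out parabolically induced contributions on the basis of the supertempered growth condition. Within the Dirac-cohomological framework of the paper, this amounts to identifying the $Z(\frg)$-finite supertempered distributions with the subspace of the tempered Grothendieck group spanned by representations of nonzero Dirac index, and that identification depends on the precise decay estimates of Eisenstein-type characters along the split directions of non-compact Cartans that Harish-Chandra established in [HC4]. Once this step is in hand, the remainder is the short orthogonality argument above.
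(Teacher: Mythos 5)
The paper does not prove this statement. It is a verbatim citation of Theorem~5 from Harish-Chandra's paper on supertempered distributions (the reference [HC4]), and the article simply invokes it; there is no in-paper proof to compare against. What can be assessed is whether your outline would give a valid alternative proof, and it would not.

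The final step is where the argument breaks. Corollary~\ref{orthogonality} asserts only that $(\Theta_\pi,\Theta_{\pi'})_{ell}\in\{0,\pm1\}$; it does not assert that distinct elliptic tempered characters are mutually orthogonal, and Harish-Chandra's Theorem~14 (quoted immediately after the statement at issue) shows explicitly that the pairing is $\pm1$ whenever $\Phi_{\pi_1}=\pm\Phi_{\pi_2}$, which happens for non-isomorphic $\pi_1,\pi_2$. The standard instance is a pair of limits of discrete series $D^+,D^-$ attached to a common singular Harish-Chandra parameter: $\Theta_{D^+}+\Theta_{D^-}$ is the character of a parabolically induced representation and so vanishes on the elliptic set, giving $(\Theta_{D^+},\Theta_{D^-})_{ell}=-1$, and the $2\times 2$ Gram block with $1$ on the diagonal and $-1$ off the diagonal is singular. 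Your linear system then has a nontrivial kernel and the argument does not close. The reason this is not a counterexample to the theorem itself is that neither $\Theta_{D^+}$ nor $\Theta_{D^-}$ is individually supertempered; only the combination $\Theta_{D^+}-\Theta_{D^-}$ is. Thus your step two --- that $\Theta$ is a linear combination of elliptic tempered characters --- places $\Theta$ in a span strictly larger than the space of $Z(\frg)$-finite supertempered distributions, and the supertempered hypothesis, which you never use again after that reduction, is precisely what is needed to cut down to a subspace on which the elliptic form is definite. Identifying that subspace is the content of the final corollary of Section~8 (Arthur's orthonormal basis), which itself relies on the theorem you are trying to prove.

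This points to a circularity that you partly acknowledge but understate: the pruning step, the orthogonality relations you invoke, and the orthonormal basis are all established in [HC4] \emph{after} Theorem~5 and in part by means of it. Harish-Chandra's own argument is analytic --- a growth-and-matching estimate across the conjugacy classes of Cartan subgroups, controlling constant terms in the split directions and showing that a $Z(\frg)$-finite supertempered eigendistribution is determined by its elliptic restriction --- and is not reducible to linear algebra over a basis whose existence already presupposes the theorem.
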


\begin{theorem}
[Theorem 9 \cite{HC3}] \label{super tempered} For $\mu\in \widehat{T(\R)}$,
there is a unique supertempered distribution $\Theta_\mu$, such that
$${}'\Delta(\gamma)\Theta_\mu(\gamma)=\sum_{w\in W_K}{\epsilon(w)}e^{w\mu},$$
where ${}'\Delta$ is the Weyl denominator (see Section 27 [HC2]).
\end{theorem}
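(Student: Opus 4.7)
The plan is to establish uniqueness first by invoking the preceding theorem and then to construct $\Theta_\mu$ explicitly using characters of discrete series and their limits, organizing the argument according to the regularity properties of $\mu$.

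For uniqueness, suppose $\Theta_\mu$ and $\Theta'_\mu$ are two supertempered distributions satisfying the stated identity. Their difference is again a $Z(\frg)$-finite supertempered tempered distribution, and the identity forces it to vanish on $T_{ell}(\R)^{reg}/W_K$. Since every regular elliptic element of $G(\R)$ is $G$-conjugate to such an element, the restriction to the regular elliptic set is zero, and since the singular elliptic set has measure zero this gives $\Theta_e=0$ in the sense of the preceding theorem. That theorem (Theorem 5 of \cite{HC3}) then forces the difference to be zero.

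For existence, I would split into cases according to the regularity of $\mu$. If $\mu$ is not regular with respect to $W_K$, then there is a compact simple root $\alpha$ with $s_\alpha\mu=\mu$, so $\sum_{w\in W_K}\epsilon(w)e^{w\mu}=0$ and the zero distribution works. Suppose now that $\mu$ is $W_K$-regular. If $\mu$ is moreover regular for the full Weyl group $W$, we may, after a $W_K$-action, take $\mu$ to be dominant for $\Delta^+(\frk,\frt)$, and set $\Theta_\mu=(-1)^q\Theta_{\pi_\mu}$, where $\pi_\mu$ is the discrete series representation with Harish-Chandra parameter $\mu$ and $q=\dim(G/K)/2$. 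Harish-Chandra's character formula yields precisely $({}'\Delta)\Theta_{\pi_\mu}=(-1)^q\sum_{w\in W_K}\epsilon(w)e^{w\mu}$ on $T_{ell}(\R)^{reg}$, and discrete series characters are well known to be supertempered. If $\mu$ is $W_K$-regular but $W$-singular, I would instead take the character of the appropriate limit of discrete series, which by the results recalled in Section 7 (and \cite{DH2}) is an $A_\frb(\lambda)$-module for some $\theta$-stable Borel $\frb$, and whose character on $T_{ell}(\R)^{reg}$ has the same Weyl-antisymmetric form.

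The main obstacle is the singular case. For $W$-regular $\mu$ everything follows from Harish-Chandra's explicit formulas for discrete series characters; but when $\mu$ is merely $W_K$-regular, one must exhibit a tempered distribution that simultaneously has the prescribed character on the elliptic set \emph{and} is supertempered (tempered does not in general imply supertempered). Here I would either invoke Harish-Chandra's direct construction of supertempered distributions in \cite{HC4}, or use the characterization from Section 7 of irreducible tempered elliptic representations as $A_\frb(\lambda)$-modules with a single $\Kt$-module as Dirac cohomology, which by Lemma \ref{character-theta} controls $({}'\Delta)\Theta$ on the elliptic set. Together with the uniqueness argument, this completes the proof.
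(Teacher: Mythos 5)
The paper does not actually prove this statement: it is quoted verbatim as Theorem~9 of Harish-Chandra's ``Supertempered Distributions on Real Reductive Groups'' \cite{HC3}, just as the flanking Theorems~5 and~14 are. So there is no internal proof to compare against; the only question is whether your sketch is sound on its own terms.

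Your overall architecture is the right one (and it does mirror how Harish-Chandra organizes the argument): uniqueness via the restriction-to-the-elliptic-set vanishing principle (Theorem~5), and existence by exhibiting discrete series characters and their coherent continuations. Two points deserve more care, though. First, in the uniqueness step you must know that the difference of the two candidate distributions is not only supertempered but also $Z(\frg)$-finite in order to invoke Theorem~5; this is automatic if both candidates are characters or virtual characters, but it is not a formal consequence of the two identities, and Harish-Chandra imposes $Z(\frg)$-finiteness as part of his framework. Second, and more substantively, in the $W_K$-regular but $W$-singular case, the supertempered distribution is in general \emph{not} the character of a single limit of discrete series; it is the coherent continuation of the regular family, which on restriction to $T_{ell}(\R)^{reg}$ still has the antisymmetrized exponential numerator, but as a virtual character it is a signed linear combination of characters of several irreducible limits of discrete series (this is exactly what the corollary following the theorem in the paper says, in describing Arthur's basis). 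Writing ``take the character of the appropriate limit of discrete series'' suggests a single irreducible, which can fail the desired identity.

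The genuine gap is the one you yourself flag: proving that the coherently continued virtual character remains supertempered (rapid decay in Harish-Chandra's $\mathcal{C}(G)$ sense) when the parameter $\mu$ is $W$-singular. Your first proposed workaround --- ``invoke Harish-Chandra's direct construction in \cite{HC3}'' --- is not a proof but a citation of the very theorem being proved. Your second --- using the Section~7 characterization of tempered elliptic representations as $A_\frb(\lambda)$-modules and Lemma~\ref{character-theta} --- is circular as well, since that material depends on the theorems of \cite{HC3} being available. A self-contained proof would need an independent supertemperedness estimate for the coherent family; absent that, the argument is an outline of the statement's content rather than a proof.
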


\begin{theorem}
[Theorem 14 \cite{HC3}] If $\pi_1,\pi_2$ are irreducible tempered elliptic representations,  then
either $(\Theta_{\pi_1},\Theta_{\pi_2})_{ell}=0$ or $\Phi_{\pi_1}=\pm \Phi_{\pi_2}$.
\end{theorem}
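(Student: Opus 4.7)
The plan is to combine the structural description of tempered elliptic representations via Dirac cohomology with Harish-Chandra's formula for their characters on the elliptic set, using Theorem \ref{elliptic pairing} as the bridge. The crucial input, already recalled just before Corollary \ref{orthogonality} (citing \cite{DH2}), is that an irreducible tempered elliptic representation $\pi$ satisfies $\pi\cong A_\frb(\lambda_\pi-\rho)$ for some $\theta$-stable Borel subalgebra $\frb$, and its Dirac cohomology $H_D(X_\pi)=E_{\mu_\pi}$ is a single $\Kt$-module concentrated in one parity. By the computation in the Example following the stages theorem, $\mu_\pi=\lambda_\pi-\rho_c$, so the Dirac index character is $\theta_{\pi}=\pm\ch E_{\mu_\pi}$.

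First I would dispose of the orthogonality alternative. By Theorem \ref{elliptic pairing} and the previous paragraph,
\[
(\Theta_{\pi_1},\Theta_{\pi_2})_{ell}=(\theta_{\pi_1},\theta_{\pi_2})=\pm\,(\ch E_{\mu_{\pi_1}},\ch E_{\mu_{\pi_2}})_{\Kt},
\]
which by Schur orthogonality on $\Kt$ vanishes unless $E_{\mu_{\pi_1}}\cong E_{\mu_{\pi_2}}$, i.e.\ unless $\mu_{\pi_1}=\mu_{\pi_2}$. Whenever the two Dirac cohomology $\Kt$-types differ this already gives the first alternative.

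Second I would show that coincidence of the Dirac cohomology $\Kt$-types forces $\Phi_{\pi_1}=\pm\Phi_{\pi_2}$ on the regular elliptic set. The tool is Harish-Chandra's formula for the character of a tempered elliptic representation on the compact torus $T_{ell}(\R)$: for $\gamma\in T_{ell}(\R)\cap G_{reg}(\R)$,
\[
{}'\Delta(\gamma)\,\Theta_\pi(\gamma)=\epsilon_\pi\sum_{w\in W_K}\epsilon(w)e^{w\lambda_\pi}(\gamma),
\]
with $\epsilon_\pi=\pm1$. From $\mu_\pi=\lambda_\pi-\rho_c$ and the $W_K$-antisymmetry of the numerator one sees that $\mu_{\pi_1}=\mu_{\pi_2}$ forces $\lambda_{\pi_1}$ and $\lambda_{\pi_2}$ to lie in the same $W_K$-orbit, so the two numerators agree up to a global sign. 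Since $|D(\gamma)|^{1/2}=|{}'\Delta(\gamma)|$ on $T_{ell}$, the definition $\Phi_\pi(\gamma)=|D(\gamma)|^{1/2}\Theta_\pi(\gamma)$ yields $\Phi_{\pi_1}(\gamma)=\pm\Phi_{\pi_2}(\gamma)$ on the regular elliptic set, finishing the proof.

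The main obstacle is making the sign bookkeeping rigorous and treating the (elliptic) limits of discrete series on the same footing as the discrete series: one must verify that the single $\Kt$-type $E_{\mu_\pi}$ really does determine $\lambda_\pi$ modulo $W_K$ in both cases, which is where the classification in \cite{DH2} enters essentially. A clean alternative for Step~2 is to invoke Theorem 9 above directly: the restriction of $\Theta_{\pi_i}$ to $G_{ell}$ is, up to sign, the unique supertempered distribution $\Theta_{\lambda_{\pi_i}}$, so equal Dirac cohomology types produce proportional elliptic restrictions, and Corollary \ref{orthogonality} forces the proportionality constant to be $\pm1$.
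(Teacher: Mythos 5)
The paper offers no proof of this statement: it is quoted verbatim as Theorem~14 of Harish-Chandra's paper on supertempered distributions \cite{HC3}, where it is established by the harmonic-analytic machinery of that paper (the structure of the space of supertempered distributions and the discrete series character formula). Your argument is therefore a genuinely different route, re-deriving Harish-Chandra's orthogonality dichotomy from the Dirac-cohomological machinery that this paper develops. The comparison is instructive and worth recording.

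Your proof is essentially correct. The key input you use is the same one the paper itself invokes just before Corollary~\ref{orthogonality}, namely the classification from \cite{DH2}: an irreducible tempered elliptic $\pi$ has $H_D(X_\pi)=E_{\mu_\pi}$, a single $\Kt$-type in a single parity, hence $\theta_{\pi}=\pm\ch E_{\mu_\pi}$ with a fixed sign. Step~1 then follows cleanly from Theorem~\ref{elliptic pairing} together with Schur orthogonality on $\Kt$: if $\mu_{\pi_1}\neq\mu_{\pi_2}$ the elliptic pairing vanishes. For Step~2 you can in fact avoid appealing to the explicit numerator formula ${}'\Delta\,\Theta_\pi=\epsilon_\pi\sum_{w\in W_K}\epsilon(w)e^{w\lambda_\pi}$ (which is itself Harish-Chandra content and needs a careful statement for limits of discrete series). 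Instead, note directly that on regular elliptic $\gamma$ one has, by the definitions and Harish-Chandra's regularity theorem,
\[
\theta_\pi(\gamma)=\Theta_\pi(\gamma)\bigl(\ch S^+(\gamma)-\ch S^-(\gamma)\bigr),\qquad
\Phi_\pi(\gamma)=|D(\gamma)|^{1/2}\,\Theta_\pi(\gamma),
\]
so the ratio $\Phi_\pi(\gamma)/\theta_\pi(\gamma)=|D(\gamma)|^{1/2}\bigl(\ch S^+-\ch S^-\bigr)(\gamma)^{-1}$ is a function of $\gamma$ alone, independent of $\pi$ and nonvanishing on the regular elliptic set. Hence $\mu_{\pi_1}=\mu_{\pi_2}$ gives $\theta_{\pi_1}=\pm\theta_{\pi_2}$ with a constant sign, which immediately forces $\Phi_{\pi_1}=\pm\Phi_{\pi_2}$ with that same sign. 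This small reorganization removes the one genuinely hand-wavy passage in your Step~2 (the claim that $\mu_{\pi_1}=\mu_{\pi_2}$ forces $\lambda_{\pi_1}$ and $\lambda_{\pi_2}$ into one $W_K$-orbit and that the numerators then agree up to a global sign) and makes your proof self-contained given Theorem~\ref{elliptic pairing}, Lemma~\ref{character-theta}, and the single-$\Kt$-type fact from \cite{DH2}. What your route buys is a uniform Dirac-theoretic explanation of the orthogonality dichotomy; what Harish-Chandra's original route buys is independence from the classification in \cite{DH2} (which is much more recent). One should check, as you implicitly do, that \cite{DH2} does not itself rely on Harish-Chandra's Theorem~14, which would make the argument circular; it does not.
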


As mentioned earlier in the previous section, Arthur found an orthonormal basis
for the space of supertempered distributions consisting of
the virtual characters of tempered representations.
It is clear from the orthogonality relation (Coroallary \ref{orthogonality}) and
the above theorems of Harish-Chandra (Theorems 8.3-8.5)
that the Arthur's basis consists of characters of discrete series and appropriate
linear combinations of the characters of limits of discrete series with the same
Dirac index up to a sign.  We summarize it as the following corollary.

\begin{cor}
The characters of discrete series and the appropriate linear combinations of the characters of
limited of discrete series with the same Dirac index (up to a sign) form an orthonormal
basis of the space of supertempered distributions.
\end{cor}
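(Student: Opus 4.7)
The plan is to combine three inputs: the identification in \cite{DH2} of irreducible tempered elliptic representations as precisely those admissible $\pi$ with nonzero Dirac cohomology, each an $A_\frb(\lambda)$-module whose Dirac cohomology $H_D(X_\pi)$ is a single $\Kt$-type $E_\mu$ with $\mu\in\wh{T(\R)}$; the elliptic orthogonality of Corollary \ref{orthogonality} together with Theorem 8.5, which forces two irreducible elliptic tempered characters either to be orthogonal in $(\cdot,\cdot)_{ell}$ or to satisfy $\Phi_{\pi_1}=\pm\Phi_{\pi_2}$ on $G_{ell}$; and the uniqueness/parametrization Theorems 8.3 and 8.4 for supertempered distributions.

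First I would read off the Dirac index character $\theta_\pi=\pm\ch E_\mu$ for each irreducible tempered elliptic $\pi$. By Theorem \ref{elliptic pairing}, $(\Theta_{\pi_1},\Theta_{\pi_2})_{ell}=(\theta_{\pi_1},\theta_{\pi_2})$, which equals $\pm 1$ or $0$ according as $\pi_1$ and $\pi_2$ share the same Dirac index $\Kt$-type (up to sign) or not. Discrete series have regular Harish-Chandra parameters and hence pairwise distinct $\mu$'s, so their characters form a mutually orthonormal family in the elliptic pairing. Limits of discrete series at singular walls, by contrast, may share a common $\mu$ up to sign, and Theorem 8.5 then forces the corresponding $\Phi_\pi$'s to coincide up to sign on $G_{ell}$.

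Second, I would group the irreducible tempered elliptic representations into equivalence classes indexed by the $\Kt$-type $E_\mu=|\theta_\pi|$. For each singleton class, that is, for each discrete series, I retain the original character. For each multi-element class of limits of discrete series, I take the appropriate signed linear combination that produces a virtual character of elliptic norm one; such a combination exists because by Theorem 8.5 all members of the class are $\pm$ scalar multiples of one another on $G_{ell}$, so averaging with the correct signs normalizes the elliptic inner product while Lemma \ref{character-theta} guarantees that the Dirac-index absolute values are compatible.

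Finally I would verify that the orthonormal set thus produced exhausts the space of supertempered distributions. By Theorem 8.4, every supertempered distribution is $\Theta_\mu$ for some $\mu\in\wh{T(\R)}$, with restriction to the regular locus computed by ${}'\Del\cdot\Theta_\mu=\sum_{w\in W_K}\eps(w)e^{w\mu}$. The constructed basis element attached to the class of $\mu$ has exactly this elliptic restriction, so by the uniqueness statement Theorem 8.3 it must coincide with $\Theta_\mu$. The main obstacle I expect is the bookkeeping within the limits of discrete series: one has to check that the Dirac-index equivalence classes are exactly the fibers of Harish-Chandra's parametrization $\pi\mapsto\mu$ on limits, and that the signed averaging produces the supertempered $\Theta_\mu$ on the nose rather than another virtual character with the same restriction to $G_{ell}$; this requires a careful match of the normalizations from \cite{DH2} against the Weyl-denominator formula of Theorem 8.4.
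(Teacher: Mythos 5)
Your approach is genuinely different from the paper's. The paper does not attempt to build the basis from scratch: it quotes (in Section~7 and again just before the corollary) Arthur's result from \cite{A1} that there \emph{already exists} an orthonormal basis of the space of supertempered distributions consisting of elliptic tempered virtual characters, and then merely uses Corollary~\ref{orthogonality} together with Theorems~8.3--8.5 to identify what those virtual characters must be, namely the discrete series characters and the signed combinations of limits of discrete series sharing a common Dirac index. Your proposal instead tries to construct the basis directly by matching against Harish-Chandra's $\Theta_\mu$ and then verify orthonormality and completeness. Both routes lead to the same identification in the middle step, but they are organized quite differently: the paper's is a one-paragraph identification, yours is a two-stage construction-plus-uniqueness argument.

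The gap in your argument is the completeness step. You write that ``by Theorem~8.4, every supertempered distribution is $\Theta_\mu$ for some $\mu\in\widehat{T(\R)}$,'' but Theorem~8.4 as stated gives only existence and uniqueness of a supertempered distribution $\Theta_\mu$ with the prescribed elliptic restriction for \emph{each} $\mu$; it does not assert that the $\Theta_\mu$ span the space. Theorem~8.3 gives an injectivity statement (a supertempered $\Theta$ with $\Theta_e=0$ vanishes), which lets you recognize a distribution from its elliptic restriction, but this still leaves open whether there are supertempered distributions whose elliptic restrictions are not of the form $\sum_{w\in W_K}\epsilon(w)e^{w\mu}/{}'\Delta$. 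The missing ingredient is precisely Arthur's theorem from \cite{A1} that the elliptic tempered virtual characters (equivalently the $\Theta_\mu$) form an orthonormal \emph{basis}, which the paper invokes explicitly; without it, or without a corresponding completeness statement extracted from \cite{HC3}, your final ``exhausts the space'' claim does not follow from the theorems cited. The remaining bookkeeping you flag (matching Dirac-index fibers with Harish-Chandra's parametrization of limits of discrete series, and the sign normalization against the Weyl-denominator formula) is a real but secondary issue and is handled implicitly in the paper by accepting Arthur's parametrization of his basis.
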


%
%%%%%%%%%%%%%%%%%%%%%% Section 9 %%%%%%%%%%%%%%%%%%%%%%%%%%%%%
\section{Elliptic representations with regular infinitesimal characters}
%%%%%%%%%%%%%%%%%%%%%%%%%%%%%%%%%%%%%%%%%%%%%%%%%%%%%%%%%%%%%
%
In this section we still assume that $G(\R)\supset K(\R)$  is of equal rank
and $T(\R)$ is a maximal torus for $K(\R)$.  We consider the case that
$X$ is a simple Harish-Chandra module with regular infinitesimal character.

\begin{theorem} Suppose that an irreducible Harish-Chandra module $X$ has regular infinitesimal character.
Then we have \begin{equation}\label{Kparity}
\Hom_{\widetilde K}(H_D^+(X),H_D^-(X))=0.
\end{equation}
In particular, it follows that the Dirac index $I(X)=0$ (equivalently, its character $\theta_X=0$) if and only if
the Dirac cohomology $H_D(X)=0$.
\end{theorem}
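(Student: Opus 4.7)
The plan is to combine Vogan's conjecture (the infinitesimal-character statement from Section~1) with a parity analysis of the $\bbZ_2$-grading on $X\otimes S$. First I would observe that $H_D(X)\subset\ker D\subset\ker D^2$; by the square formula for $D^2$, the operator $D^2$ acts on any $\widetilde K$-type $E_\tau\subset X\otimes S$ by the scalar $\|\tau+\rho_c\|^2-\|\Lambda\|^2$, so every $\widetilde K$-type appearing in $H_D(X)$ satisfies $\|\tau+\rho_c\|=\|\Lambda\|$. Vogan's conjecture then forces $\tau+\rho_c=w\Lambda$ for some $w\in W$, and the regularity of $\Lambda$ makes this $w$ unique.

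Next I would invoke the $\bbZ_2$-grading. The Dirac operator $D$ is odd with respect to $X\otimes S=X\otimes S^+\oplus X\otimes S^-$, so on $V_0:=\ker D^2$ it restricts to a $\widetilde K$-equivariant differential exchanging $V_0^\pm:=V_0\cap(X\otimes S^\pm)$; its cohomology recovers the graded module $H_D^+(X)\oplus H_D^-(X)$. The main step is to argue that, for each $\widetilde K$-type $E_\tau$ appearing in $H_D(X)$ with $\tau+\rho_c=w\Lambda$, the copies of $E_\tau$ lying in $V_0^+$ and $V_0^-$ are paired as maximally as possible by the restriction of $D$, leaving a purely one-parity contribution in $H_D(X)$. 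The surviving parity is $(-1)^{l(w)}$ via the grading on the spin module $S\cong\bigoplus_{u\in W(\frg,\frt)^1}E_{u\rho-\rho_c}$ from the lemma in Section~2 (with $l_0=0$ in the equal-rank case), whose summands sit in $S^+$ or $S^-$ according to the parity of $l(u)$. Uniqueness of $w$ from regularity of $\Lambda$ then prevents mixing, so $H_D^+(X)$ and $H_D^-(X)$ are supported on disjoint sets of $\widetilde K$-types and the Hom vanishes.

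For the ``in particular'' statement I would appeal to the identity $\theta_X=\ch H_D^+(X)-\ch H_D^-(X)$ proved as a lemma in Section~7. If $\theta_X=0$, then $\ch H_D^+(X)=\ch H_D^-(X)$ as virtual $\widetilde K$-characters; combined with the disjointness of the supports, this forces $H_D^+(X)=H_D^-(X)=0$, whence $H_D(X)=0$. The reverse implication is trivial.

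The main obstacle is making the ``maximal pairing'' step rigorous for non-unitary irreducible $X$, where the clean structural description given by the $A_\frq(\lambda)$-theorem in Section~2 is unavailable. A natural route is to combine the character identity $\Delta_c\cdot\theta_X=\Delta\cdot\ch X|_T$, arising from the Weyl denominator factorization $\Delta=\Delta_c\cdot\Delta_n$ together with $\ch S^+-\ch S^-=\Delta_n$, with a Kazhdan--Lusztig type expansion of $\ch X$ in standard characters, so as to show that the signed multiplicity of $E_\tau$ in $H_D^+(X)-H_D^-(X)$ already equals its total contribution at $E_\tau$, leaving no residual multiplicity that could survive in both parities.
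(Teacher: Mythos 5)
There is a genuine gap, and it sits exactly where you flagged it: the ``maximal pairing'' step. The Euler-characteristic identity $\ch H_D^+(X)-\ch H_D^-(X)=\ch(X\otimes S^+)-\ch(X\otimes S^-)$ is automatic (it is the Lemma from Section~7), but it gives you only the \emph{signed} multiplicity of each $\widetilde K$-type in $H_D(X)$. Nothing in your argument shows that the unsigned multiplicities in $H_D^+(X)$ and $H_D^-(X)$ cannot both be positive. The appeal to the decomposition $S\cong\bigoplus_{u\in W(\frg,\frt)^1}E_{u\rho-\rho_c}$ and the parity $(-1)^{l(u)}$ does not close this: a single $\widetilde K$-type $E_\tau$ with $\tau+\rho_c=w\Lambda$ can occur inside $X_\mu\otimes E_{u\rho-\rho_c}$ for several pairs $(\mu,u)$, with $l(u)$ of both parities, so the same $E_\tau$ may sit in both $X\otimes S^+$ and $X\otimes S^-$ with $D^2$ acting by zero on each copy. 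Regularity of $\Lambda$ pins down the Weyl group element $w$ with $\tau+\rho_c=w\Lambda$, but it says nothing about which $u$ produced $\tau$; uniqueness of $w$ therefore does not ``prevent mixing'' in the sense you need, and the restriction of $D$ to $\ker D^2$ is merely an odd differential, which in general has cohomology supported in both degrees.

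The paper's proof takes a different route precisely to supply the missing parity input. It passes to Kostant's cubic Dirac operator $D(\frg,\frt)$ for a $\theta$-stable Borel $\frb=\frt+\fru$, uses the decomposition in stages $D(\frg,\frt)=D(\frg,\frk)+D_\Delta(\frk,\frt)$ and the resulting identity $H_D(\frg,\frt;X)=H_D(\frk,\frt;H_D(X))$ to reduce the $\widetilde K$-parity statement to a $\widetilde T$-parity statement, and then constructs (by a careful finite-dimensional reduction using the modified operator $\widetilde D(\frg,\frt)=D(\frg,\frk)+iD_\Delta(\frk,\frt)$, following Proposition 4.8 of [HX]) $\widetilde T$-equivariant embeddings $H_D^\pm(\frg,\frt;X)\hookrightarrow H^\pm(\fru,X)\otimes Z_{\rho(\bar\fru)}$. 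The parity at the target is exactly Vogan's theorem (Theorem 7.2 of [V1]): $\Hom_T(H^+(\fru,X),H^-(\fru,X))=0$ for irreducible $X$ with regular infinitesimal character. That theorem is the genuine structural input -- it is where the Kazhdan--Lusztig machinery you gesture at is actually brought to bear -- and your proposal has no substitute for it. If you want to make your route rigorous you would, in effect, need to re-prove a statement of this strength. The ``in particular'' deduction at the end of your proposal is fine once the parity statement is established.
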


\begin{proof}
Let $\frb=\t+\fru$ be a $\theta$-stable Borel subalgebra.
Then $\frt$ is contained in $\frk$.  We need to consider
the Dirac cohomology $H_D(\frg,\frt;X)$ of $X$ with respect to
the cubic Dirac operator $D(\frg,\frt)$.
By calculation in stages (see Section 6), we have
$$H_D(\frg,\frt;X)=H_D(\frk,\frt;H_D(X)).$$
It follows that
$$H_D^+(\frg,\frt;X)\supseteq H_D^+(\frk,\frt; H_D^+(X))$$
and
$$H_D^-(\frg,\frt;X)\supseteq H_D^+(\frk,\frt; H_D^-(X)).$$
Clearly, the following condition
\begin{equation}\label{Tparity}
\Hom_{\widetilde T}(H_D^+(\frg,\frt;X),H_D^-(\frg,\frt;X))=0
\end{equation}
implies (\ref{Kparity}).   It remains to prove (\ref{Tparity}).  We note that
it follows from a  theorem of Vogan (Theorem 7.2 [V1]) that
$$\Hom_T (H^+(\fru,X),H^-(\fru,X))=0,$$
for any irreducible Harish-Chandra module $X$ with regular infinitesimal character.
Then (\ref{Tparity}) follows from the above parity condition on $\fru$-cohomology
if we have the following embeddings
\begin{equation}\label{embed}H^\pm_D(\frg,\frt;X)\subseteq H^\pm(\fru,X)\otimes Z_{\rho(\bar \fru)}.
\end{equation}
Indeed, this can be done with slightly more deliberation by using the same argument as in Proposition 4.8 [HX].
There are only finitely
many $\Kt$-types in $X\otimes S$ that can possibly contribute to
the Dirac cohomology with respect to $D(\frg,\frk)$, and therefore also finitely many to
the Dirac cohomology with respect to
$$D=D(\frg,\frt)=D(\frg,\frk)+D_\Delta(\frk,\frt)$$ by calculation in stages.
Recall in the proof of Proposition 4.8 [HX] one has $D=d+2\partial$
and $D^2=2\partial d+2d\partial$ and the decomposition
$$X\otimes S = \Ker D^2 \oplus \im D^2$$
where $\partial$ and $d$ are the differentials for $\fru$-homology and $\fru$-cohomology.
We note that an irreducible $(\frg,\frk)$-module may not be $\frt$-admissible,
and $\Ker D^2$ can be infinite-dimensional.  To make the argument in Proposition
4.8 [HX] still works in this case, we consider
$${\widetilde D}(\frg,\frt)
=D(\frg,\frk)+iD_\Delta(\frk,\frt)$$
as in Remark 6.6.
It follows from $[{\widetilde D},D^2]=0$ that $\Ker D^2$ is stable under
the action of $\widetilde D$. We restrict $\widetilde D$
to $\Ker D^2$ and have the following decomposition
$$\Ker D^2=(\ker {\widetilde D}^2\cap\Ker D^2)\oplus (\im {\widetilde D}^2\cap \Ker D^2).$$
It is clear that $U=\Ker {\widetilde D}^2\cap \Ker D^2=\Ker D(\frg,\frk)^2\cap \Ker D(\frk,\frt)^2$
is finite-dimensional.   We set
$$V=U\oplus \partial U\oplus
dU\oplus +\partial dU.$$
Then $V$ is finite-dimensional.
It follows from  $\partial d=-d\partial$ on $\Ker D^2$  that
$V$ is stable under the action of $\partial$ and $d$, as well as $D=d+2\partial$.
If we replace $\Ker D^2$ by $V$ in the final step of the proof of Proposition 4.8 [HX],
then the same argument works here.
Precisely, we restrict all operators $D$, $\partial$ and $d$ to $V$.
We have $D^2=0$ and thus  $\im D\subset \Ker D$.
Note that $\Ker D^2/\Ker D\simeq\im D$. Denote by $\partial'$ the
map of $\partial$ restrict to $V$. Then $\Ker D^2/\Ker
\partial'\simeq\im \partial'$. Recall that $\ch$ denotes the formal
$\frt$-characters.  We obtain
\begin{equation*}
\ch \im D+\ch \Ker D=\ch \im \partial'+\ch \Ker \partial'.
\end{equation*}
Moreover, one has $\im \partial'\subseteq\Ker \partial'$ since
$\partial^2=0$. Therefore,
\begin{equation}\label{chD equation1}
\ch \Ker \partial'/\im \partial'-\ch \Ker D/\im D =2(\ch \Ker
\partial'-\ch \Ker D).
\end{equation}
Then all the modules here are direct sum of
finite dimensional  $\frt$-modules. It follows from Lemma 4.6 of [HX]
and (\ref{chD equation1}) there is an
injective $\frt$-module homomorphism
\begin{equation*}
\Ker D/\im D\rightarrow \Ker \partial'/\im \partial'.
\end{equation*}
Note that the right side can be naturally embedded into $\Ker
\partial/\im \partial$. This gives the embedding of Dirac cohomology into
$\fru$-homology and we get the embedding into $\fru$-cohomology similarly.
\end{proof}

We note that in the above proof we conclude that the embeddings (\ref{embed})
are actually isomorphisms
$$H^\pm_D(\frg,\frt;X)\cong H^\pm(\fru,X)\otimes Z_{\rho(\bar \fru)}.$$

\begin{remark}
We remark that the parity condition $\Hom_T (H^+(\fru,X),H^-(\fru,X))=0$
may fail if infinitesimal character of $X$ is not regular.
I learned the following example from the lecture by Wilfred Schmid
at the Vogan Conference at MIT (in May of 2014)
and the lecture note by Dragan Mili\v ci\'c at a recent conference
(in summer of 2014) at Jacobs University in Bremen.
Let $G$ be $SU(2,1)$  and $\frb$ a $\theta$-stable parabolic which contains
neither $\frp^+$ nor $\frp^-$. Let $X$ be the degenerate limit
of discrete series with infinitesimal character $0$.
Then $X$ fails to satisfy the parity condition.
\end{remark}

We note that in the above example,  if $X$ is the limit of discrete series of $G=SU(2,1)$ with
infinitesimal character $0$, then the Dirac cohomology of $X$ is zero and the embeddings
$$H^\pm_D(\frg,\frt;X)\subseteq H^\pm(\fru,X)\otimes Z_{\rho(\bar \fru)}$$
are not isomorphisms.  However, the parity condition for Dirac cohomology is still true.
All examples we know indicate that this is perhaps true in general.

\begin{conj}  Let $X$ be an irreducible $(\frg,K)$-module. Then
$$\Hom_{\widetilde K}(H_D^+(X),H_D^-(X))=0.$$
\end{conj}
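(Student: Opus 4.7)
The plan is to reduce the singular case of the conjecture to the preceding theorem (the regular infinitesimal character case) via wall-crossing functors, exploiting the fact that the $\bbZ_2$-grading on Dirac cohomology is compatible with tensoring by finite-dimensional modules. Note that the remark preceding the conjecture observes that the $\fru$-cohomology parity used in the regular case can genuinely fail in the singular case (e.g.\ for degenerate limits of discrete series of $SU(2,1)$), but in every known example of such failure the Dirac cohomology itself vanishes, so the conjecture still holds. This suggests that any proof must bypass the direct $\fru$-cohomology route in the singular case.

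First, given an irreducible $(\frg,K)$-module $X$ with singular infinitesimal character $\lambda$, choose a regular integral weight $\mu$ in the upper closure of a Weyl chamber whose wall contains $\lambda$, and let $\psi = \psi^\lambda_\mu$ be the Zuckerman translation functor. By Vogan's theory of coherent continuation, there exists an irreducible $(\frg,K)$-module $Y$ at infinitesimal character $\mu$ such that $X$ occurs as a subquotient of $\psi(Y)$. By the theorem just established, $\Hom_{\widetilde K}(H_D^+(Y), H_D^-(Y)) = 0$.

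Second, I would verify that translation respects the $\bbZ_2$-graded Dirac cohomology. Since $\psi(M) = \mathrm{pr}_\lambda (M \otimes F)$ for a suitable finite-dimensional $G$-module $F$, and since the Dirac operator on $(M\otimes F)\otimes S$ is related to the Dirac operator on $M\otimes S$ by a $K$-equivariant operator preserving the $S^+/S^-$ decomposition, one obtains
\[
H_D^{\pm}(M\otimes F) \cong H_D^{\pm}(M)\otimes F\big|_{\widetilde K}
\]
as graded $\widetilde K$-modules. Projection to an infinitesimal character block commutes with $D$ and preserves the grading, so the parity condition descends from $Y$ to $\psi(Y)$.

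Third, I would pass the parity condition from $\psi(Y)$ to its irreducible subquotient $X$. If $X$ is a direct summand, this is immediate; the genuine difficulty is when $X$ is a proper subquotient. To handle this I would analyze the socle (or Jantzen) filtration of $\psi(Y)$, arguing that a $\widetilde K$-type appearing in both $H_D^+(X)$ and $H_D^-(X)$ would lift to matching $\widetilde K$-types of opposite parity in $H_D^\pm(\psi(Y))$, contradicting the parity condition established there. A clean way to formulate this is to use the Euler-characteristic identity $I(X) = H_D^+(X) - H_D^-(X)$ (additive in exact sequences) together with the fact that Dirac cohomology, when nonzero, is controlled by the lowest $\widetilde K$-types via Vogan's classification, so cancellation between the $+$ and $-$ parts of composition factors is ruled out.

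The hard part will be precisely step three, that is, controlling the passage to subquotients at a singular infinitesimal character. The $SU(2,1)$ example shows that wall-crossing can annihilate entire pieces of Dirac cohomology, and the danger is that a $\widetilde K$-type of one parity on $Y$ could conceivably land on the opposite parity component of a different composition factor of $\psi(Y)$. Ruling this out will likely require a careful compatibility between the Langlands classification of $X$ and the $\bbZ_2$-grading on the spin module $S$, together with the observation (from Vogan's classification of lowest $\widetilde K$-types contributing to Dirac cohomology) that each contributing $\widetilde K$-type has an intrinsically determined parity independent of $X$. Establishing this intrinsic parity assignment is, in my view, the crux of the conjecture.
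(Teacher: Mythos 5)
The paper offers no proof of this statement: it is recorded as a conjecture, with the only supporting evidence being the tempered case treated in the previous section and the observation that the $SU(2,1)$ degenerate limit-of-discrete-series example, where the $\fru$-cohomology parity of Vogan genuinely fails, happens to have vanishing Dirac cohomology and so does not falsify the conjecture. Your proposal is therefore an attempt at an open problem, and I will assess it on its own terms.

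The central gap is in your second step. The claimed identification $H_D^{\pm}(M\otimes F)\cong H_D^{\pm}(M)\otimes F|_{\widetilde K}$ is not correct. The Dirac operator $D=\sum_i Z_i\otimes Z_i$ acts on $(M\otimes F)\otimes S$ through the \emph{diagonal} $\frg$-action on $M\otimes F$: applying $D$ to $(m\otimes f)\otimes s$ produces $\sum_i(Z_im\otimes f)\otimes Z_is+\sum_i(m\otimes Z_if)\otimes Z_is$, and the second summand, which couples $F$ to $S$, has no counterpart when $D$ acts only on $M\otimes S$. The kernel of $D^2$ on $(M\otimes F)\otimes S$ therefore involves all infinitesimal characters occurring in $M\otimes F$, not just that of $M$; only the Euler characteristic factorizes, $I(M\otimes F)=I(M)\cdot\ch F$, because the index is merely a formal $\widetilde K$-character computation. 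The paper's own identity $H^{*}(\frg,K;X\otimes F^{*})\cong\Hom_{\widetilde K}(H_D(F),H_D(X))$ is precisely an expression of the nontrivial interaction of $H_D$ with tensoring, not of its transparency. Absent a correct replacement for your tensor formula, the intended reduction to the regular case does not get started.

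Your third step also carries a structural difficulty that is not merely technical. Translation to the wall (with $\mu$ regular, $\lambda$ singular in the closure of the $\mu$-chamber) is exact and carries irreducibles to irreducibles or zero, so the socle/Jantzen-filtration argument has content only if you are invoking some coarser coherent-continuation formalism, and it is exactly in that regime that Dirac cohomology is unstable: the $SU(2,1)$ example shows that translating a discrete series to a singular wall can kill its entire Dirac cohomology. The "intrinsic parity assignment" you posit at the end is, as you yourself suspect, not a detail to be checked but the entire substance of the conjecture; if $\widetilde K$-types contributing to $H_D$ carried a parity independent of the module, compatible with the Langlands classification and stable under wall-crossing, the conjecture would follow almost immediately. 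Exhibiting such an invariant, or a genuine singular replacement for Vogan's $\fru$-cohomology parity theorem, is where the actual mathematics lies, and the translation-functor scaffolding does not by itself supply it.
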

As we have already mentioned in the previous section,
the above conjecture is true if $X$ is a tempered Harish-Chandra module.

It is a natural question to classify irreducible unitary elliptic representations of
$G(\R)$ and to classify irreducible unitary representations with nonzero Dirac cohomology.
We can solve this problem under the condition that the infinitesimal character is regular.
We first recall a theorem of Salamanca-Riba.
\begin{theorem}[Salamanca-Riba \cite{SR}]  Let $G$ be a connected reductive Lie group.
If $X$ is an irreducible unitary
$(\frg,K)$-module with strongly regular infinitesimal character,
then $X\cong A_\frq(\lambda)$ for certain $\theta$-stable parabolic $\frq$ and
$\lambda$.
\end{theorem}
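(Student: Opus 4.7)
The plan is to leverage the Extended Dirac Inequality (Theorem 1.3) and the characterization of $A_\frq(\lambda)$-modules by their lowest $K$-type (Theorem 2.4). First I would pick a lowest $K$-type $E_\mu$ of $X$ and, with a compatible choice of positive roots $\Delta^+(\frg,\frt)$ and $\Delta^+(\frk,\frt)$, apply the inequality
\begin{equation*}
\langle w(\mu-\rho_n)+\rho_c, w(\mu-\rho_n)+\rho_c\rangle \geq \langle \Lambda,\Lambda\rangle
\end{equation*}
for some $w\in W_K$ making $w(\mu-\rho_n)$ dominant for $\Delta^+(\frk)$.

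The crucial intermediate step is to argue that strong regularity of $\Lambda$ forces equality here. Strong regularity places $\Lambda$ at the extremal position in its $W$-orbit relative to $\rho$; this maximality, combined with the a priori lower bound furnished by the lowest $K$-type of any $(\frg,K)$-module containing $E_\mu$, squeezes both sides of the Dirac inequality together. Once equality holds, Theorem 1.3 yields that some $W$-conjugate of $\Lambda$ equals $w(\mu-\rho_n)+\rho_c$, which is exactly the form of the infinitesimal character that an $A_\frq(\lambda)$-module with lowest $K$-type $\mu$ must have (namely $\lambda+\rho$ with $\mu(\frq,\lambda) = \mu$).

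Using this data I would construct $\frq = \frl+\fru$ as follows: select $H\in i\frt_0$ whose $\ad(H)$-eigenspace decomposition is adapted to the splitting of $\mu-\rho_n$ into parts along and orthogonal to $\rho_c$, letting $\fru$ be the sum of strictly positive eigenspaces and $\frl$ the zero eigenspace. Define $\lambda\in\frl^*$ by requiring $\lambda|_\frt + 2\rho(\fru\cap\frp) = \mu$ and extending appropriately to ensure admissibility. By Theorem 2.4, $A_\frq(\lambda)$ then exists, is unitary, has lowest $K$-type $\mu$, and carries infinitesimal character $\Lambda$, matching $X$ in each of these invariants.

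The hardest and last step is to upgrade this agreement of invariants to an actual isomorphism $X\cong A_\frq(\lambda)$. This is where I expect the main obstacle to lie: one needs a uniqueness statement asserting that, in the strongly regular range, an irreducible unitary $(\frg,K)$-module is determined by its lowest $K$-type together with its infinitesimal character. Establishing this demands a Langlands classification argument to enumerate all irreducible admissible candidates sharing these invariants, followed by a signature analysis of the invariant Hermitian form on the corresponding standard modules to rule out non-unitarity for all candidates except $A_\frq(\lambda)$. This Hermitian-form analysis — essentially Vogan's unitarizability machinery combined with Knapp-Zuckerman theory — is the deepest ingredient, and where the real work of the argument resides.
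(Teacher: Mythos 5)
The paper does not prove this theorem: it is quoted verbatim from Salamanca-Riba \cite{SR} and used as a black box to deduce the subsequent theorem that unitary elliptic representations with regular infinitesimal character are $A_\frq(\lambda)$-modules. There is therefore no proof in the text to compare your argument against.

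Judged as a reconstruction of Salamanca-Riba's actual proof, your sketch correctly identifies the Parthasarathy--Dirac inequality and lowest $K$-type analysis as central ingredients, but the pivotal step as you phrase it is a genuine gap. You assert that strong regularity of $\Lambda$ ``squeezes both sides of the Dirac inequality together,'' but strong regularity --- meaning $\Lambda-\rho$ lies in the closed dominant chamber --- does not by itself force equality in the inequality for an arbitrary lowest $K$-type $E_\mu$; there is no ``a priori lower bound'' that would do the squeezing. What strong regularity actually provides in \cite{SR} is combinatorial control over which $\mu$ can be a lowest $K$-type, and the argument then proceeds by an induction on rank via cohomological induction in the good range (the ``bottom layer'' of $K$-types technique), reducing to a smaller group $L$ rather than directly saturating the inequality in one pass. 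Your closing step likewise compresses too much: infinitesimal character together with a lowest $K$-type does not on its own determine an irreducible Harish-Chandra module, so one needs the full Vogan lowest $K$-type parametrization and the inductive unitarity reduction, not merely a Langlands-classification-plus-signature argument bolted on at the end. These are not cosmetic complaints --- they locate exactly where the substance of Salamanca-Riba's theorem lies, and a correct proof would have to supply them.
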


\begin{theorem}  Suppose $\pi$ is an irreducible unitary elliptic representation of
$G(\R)$ with a regular infinitesimal character.
Then $X_\pi\cong A_\frq(\lambda)$.
\end{theorem}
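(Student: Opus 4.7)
The plan is to combine the two main results of Sections 7 and 9 with the Salamanca-Riba classification stated just above, bridging the gap between ``regular'' and ``strongly regular'' by using the consequences of non-vanishing Dirac cohomology.

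First, since $\pi$ is elliptic, the theorem of Section 7 (elliptic $\iff I(X_\pi)\neq 0$) gives $I(X_\pi)\neq 0$. Since the infinitesimal character $\Lambda$ of $X_\pi$ is regular, the first theorem of Section 9 (which asserts that, under regularity, $I(X)=0$ iff $H_D(X)=0$) then yields $H_D(X_\pi)\neq 0$. So the problem reduces to the following statement: every irreducible unitary $(\frg,K)$-module $X$ with regular infinitesimal character and nonzero Dirac cohomology is an $A_\frq(\lambda)$-module.

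Next, I would invoke the extended Dirac inequality (Theorem 1.3). Non-vanishing of $H_D(X_\pi)$ forces the inequality to be an equality at some $K$-type $\mu$ of $X_\pi$, producing a $\Kt$-highest weight $\gamma=w(\mu-\rho_n)$ (with $w\in W_K$) such that $\gamma+\rho_c$ lies in the $W$-orbit of $\Lambda$. Choosing $\Lambda$ to be $\Delta^+(\frg,\frh)$-dominant, one obtains $\Lambda=w'(\gamma+\rho_c)$ for a suitable $w'\in W$, and hence $\Lambda-\rho=w'(\gamma-\rho_n)$. The key point is then to deduce from this formula and unitarity that $\Lambda-\rho$ is dominant, so that $\Lambda$ is strongly regular in the sense of Salamanca-Riba. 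Since $\gamma$ is the highest weight of a $\Kt$-type appearing in a \emph{unitary} module and satisfies the equality case of Parthasarathy's inequality, its position relative to $\rho_n$ is tightly constrained; combining this with the fact that $C_\frg(\frt^*_\bbR)\subseteq C_\frk(\frt^*_\bbR)$ (Section 2) and the decomposition $W=W_K\cdot W(\frg,\frt)^1$ should force $w'$ into $W(\frg,\frt)^1$ and yield the required nonnegativity of $\langle \Lambda-\rho,\alpha\rangle$ for $\alpha\in\Delta^+(\frg,\frh)$. Once strong regularity is established, Salamanca-Riba's theorem applies immediately and gives $X_\pi\cong A_\frq(\lambda)$.

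The main obstacle is precisely this upgrade from regularity to strong regularity: regularity of $\Lambda$ by itself does not suffice, and one must genuinely exploit both unitarity and the rigid form $\Lambda\in W\cdot(\gamma+\rho_c)$ imposed by $H_D(X_\pi)\neq 0$. If the direct chamber analysis above proves too delicate, an alternative route is to match $\gamma$ with a pair $(\frq,\lambda)$ using the explicit computation of $H_D(A_\frq(\lambda))$ from Theorem 2.9, construct an $A_\frq(\lambda)$-module sharing infinitesimal character and Dirac cohomology with $X_\pi$, and then appeal to the uniqueness properties of irreducible modules at regular infinitesimal character to identify the two. I would pursue the strong regularity route first, as it leverages Salamanca-Riba as a black box and confines all the technical work to the single step above.
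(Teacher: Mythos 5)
Your first two steps are exactly those of the paper: ellipticity gives $I(X_\pi)\neq 0$ by the theorem of Section~7, and then regularity of the infinitesimal character together with Theorem~9.1 gives $H_D(X_\pi)\neq 0$. Where you diverge is in the passage from ``regular'' to ``strongly regular.'' You try to close this gap by a chamber-position argument: starting from the equality case of Parthasarathy's inequality you produce a $\Kt$-highest weight $\gamma$ with $\Lambda$ conjugate to $\gamma+\rho_c$, and then you try to force $\Lambda-\rho$ into the dominant cone by analyzing $w\in W_K$, $w'\in W$ and the inclusion $C_\frg(\frt^*_\bbR)\subseteq C_\frk(\frt^*_\bbR)$. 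This will not work as stated: strong regularity is not merely a positivity condition but a \emph{threshold} condition, $\langle\Lambda,\alpha^\vee\rangle\geq\langle\rho,\alpha^\vee\rangle=1$ for simple $\alpha$, and regularity alone only yields $\langle\Lambda,\alpha^\vee\rangle>0$. No amount of reasoning about which Weyl chamber $\Lambda$ lies in can rule out values in the open interval $(0,1)$.

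The ingredient your argument is missing is \emph{integrality}. The paper's proof observes that because $H_D(X_\pi)\neq 0$, the class $\gamma+\rho_c$ (hence $\Lambda$) is analytically integral for $K(\R)$ and for a compact real form of $G(\bbC)$, so that $\langle\Lambda,\alpha^\vee\rangle\in\bbZ$ for all $\alpha\in\Delta(\frg,\frt)$. Once $\Lambda$ is known to be integral, regularity ($\langle\Lambda,\alpha^\vee\rangle\neq 0$) immediately upgrades to $\langle\Lambda,\alpha^\vee\rangle\geq 1$ for positive roots, i.e.\ strong regularity, and Salamanca-Riba's theorem applies. Your alternative route---matching $\gamma$ with a pair $(\frq,\lambda)$ via Theorem~2.9 and appealing to ``uniqueness at regular infinitesimal character''---is also not a substitute, since identifying an irreducible unitary module with a cohomologically induced one on the basis of matching Dirac cohomology is exactly the content of Salamanca-Riba's classification and cannot be invoked independently of it. If you insert the integrality observation into your first route, you recover the paper's proof.
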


\begin{proof}
Since $X_\pi$ has nonzero Dirac cohomology, its infinitesimal character
is analytically integral for $K(\R)$ as well as for a compact real form
of $G(\bbC)$, and hence it is integral in $\Delta(\frg,\frt)$.
Then the regular infinitesimal character of $X_\pi$ is strongly
regular, and $X_\pi\cong A_\frq(\lambda)$ follows from Salamanca-Riba's theorem.
\end{proof}

Suppose that $\pi$ is an irreducible unitary representation.
It is a natural question to ask to what extent the Dirac cohomology $H_D(X_\pi)$
determines the representation $\pi$ itself.
For representations with singular
infinitesimal characters, it is easy to give examples of two non-isomorphic limits of discrete series
$\pi_1$ and $\pi_2$ such that $H_D(X_{\pi_1})=H_D(X_{\pi_2})$.
The above theorem says under the condition of
regular infinitesimal character, the question is reduced to
 $A_\frq(\lambda)$-modules.  Now the question is: if two unitary $A_\frq(\lambda)$-modules
 have isomorphic Dirac cohomology, would these two modules be isomorphic?
 The answer is not always affirmative. For example, when $G$ is $SO(2n,1)$,
 there are many non-isomorphic $A_\frq(\lambda)$-modules
that have the isomorphic Dirac cohomology.

%
%%%%%%%%%%%%%%%%%%%%%% Section 10 %%%%%%%%%%%%%%%%%%%%%%%%%%%%%
\section{Pseudo-coefficients of discrete series}
%%%%%%%%%%%%%%%%%%%%%%%%%%%%%%%%%%%%%%%%%%%%%%%%%%%%%%%%%%%%%
%

Many important questions on non-commutative Lie groups boil
down to questions in invariant harmonic analysis:
the study of distributions on groups that are invariant under conjugacy.
The fundamental objects of invariant harmonic analysis are orbital integrals
as the geometric objects and characters of representations as the spectral objects.
The correspondence of these two kinds of objects reflects the core idea of harmonic analysis.

The orbital integrals are parameterized by the set of regular semisimple conjugacy classes in $G$.
Recall for such a $\gamma$, the orbital integral is defined as
$$\caO_\gamma(f)=\int_{G/G_\gamma}f(x^{-1}\gamma x) dx, \ \ f\in C^\infty_c(G),$$
and the stable  orbital integral is defined as
$$S\caO_\gamma(f)=\sum_{\gamma'\in S(\gamma)}\caO_{\gamma'}(f),$$
where $S(\gamma)$ is the stable conjugacy class.

Let $1\!\!1$ denote the trivial representation of $G$ and $\theta_{1\!\!1}$
the character of the Dirac index of the trivial representation.  That is
$$\theta_{1\!\!1} = \ch H_D^+(1\!\!1)-\ch H_D^-(1\!\!1)=\ch S^+ -\ch S^-.$$
We note that
$$\overline{\theta_{1\!\!1}} =(-1)^q  (\ch S^+ -\ch S^-)=(-1)^q \theta_{1\!\!1},$$
where $q={1\over 2} \dim G(\R)/K(\R)$.

Recall that $\theta_{\pi}$ denotes the character of the Dirac
index of $\pi$.  If $\pi$ is the discrete series representation with Dirac cohomology $E_\mu$,
then $$\theta_{\pi}= (-1)^q  \chi_\mu.$$
Labesse showed that there exists a function $f_\pi$ so that for any admissible
representations $\pi'$,
$$\tr \pi'(f_\pi)=\int_K \Theta_{\pi'}(k)\overline{\theta_{1\!\!1}\cdot \theta_{\pi}}dk.$$

Let $\pi'$ be a discrete series representation with Dirac cohomology $E_{\mu'}$.
It follows that
$$\tr \pi'(f_\pi)=(\chi_{\mu'},\chi_{\mu})=\dim \Hom_K(E_{\mu'},E_{\mu}).$$
Then we have the following theorem.

\begin{theorem}[Labesse \cite{Lab1}]
The function $f_\pi$ is a pseudo-coefficient for the discrete series $\pi$,
i.e., for any irreducible tempered representation $\pi'$,
$$\tr \pi'(f_\pi)=\begin{cases}1 \text{\ \ \ if \ }\pi\cong \pi'\\
                                                        0\text{\ \ \ \ otherwise}.\\  \end{cases} $$
\end{theorem}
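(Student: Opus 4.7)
The strategy is to reduce the defining integral
\[
\tr\pi'(f_\pi)=\int_K \Theta_{\pi'}(k)\,\overline{\theta_{1\!\!1}\cdot\theta_\pi}(k)\,dk
\]
to a pairing of Dirac index characters on $K$. The key algebraic identity is $\theta_X=\ch X\cdot\theta_{1\!\!1}$, valid for any admissible $(\frg,K)$-module $X$ by the Euler-characteristic interpretation of the Dirac index recorded earlier in Section~7. Combined with the reality relation $\overline{\theta_{1\!\!1}}=(-1)^q\theta_{1\!\!1}$ noted at the beginning of Section~10, pulling one factor of $\overline{\theta_{1\!\!1}}$ against $\Theta_{\pi'}|_K=\ch X_{\pi'}$ converts the integrand into $(-1)^q\,\theta_{\pi'}\cdot\overline{\theta_\pi}$, so up to this explicit sign one obtains the clean reformulation $\tr\pi'(f_\pi)=(-1)^q(\theta_{\pi'},\theta_\pi)_K$.

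With this reformulation in hand I would split into cases according to whether $\pi'$ is elliptic. If $\pi'$ is not elliptic, then it is unitarily induced from a proper parabolic subgroup, and the main theorem of Section~7 gives $\theta_{\pi'}=0$, so the pairing vanishes. If $\pi'$ is an elliptic tempered representation, then by the classification recalled in Section~8 it is either a discrete series or a (nondegenerate) limit of discrete series. In either subcase, the identity $(\theta_{\pi'},\theta_\pi)_K=(\Theta_{\pi'},\Theta_\pi)_{ell}$ (Theorem~\ref{elliptic pairing}) converts the question into one about the elliptic inner product, and the orthogonality of elliptic tempered characters (Corollary~\ref{orthogonality} together with the orthonormal-basis statement that follows it) forces the pairing to vanish whenever $\pi'\not\cong\pi$.

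For the remaining case $\pi'\cong\pi$ the pairing reduces to $(\chi_\mu,\chi_\mu)_K=\dim\Hom_K(E_\mu,E_\mu)=1$ by Schur's lemma applied to the single $\widetilde K$-type $E_\mu=H_D(X_\pi)$ exhibited in the Example of Section~6; the $(-1)^q$ sign cancels the one implicit in $\theta_\pi=(-1)^q\chi_\mu$. This is entirely consistent with the identity $\tr\pi'(f_\pi)=\dim\Hom_K(E_{\mu'},E_\mu)$ for discrete series $\pi'$ already noted in the paragraph preceding the theorem, and it yields $1$ when $\pi'\cong\pi$ and $0$ for any other discrete series $\pi'$, since distinct discrete series carry distinct Harish-Chandra parameters and hence distinct $\mu$'s after $W_K$-normalization.

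The main technical obstacle is the treatment of nondegenerate limits of discrete series in the elliptic case: their Dirac cohomology, when nonzero, need not consist of a single $\widetilde K$-type, and a component isomorphic to $E_\mu$ could a priori reappear and spoil the expected vanishing. This is exactly where the orthogonality of elliptic tempered characters—itself a consequence of Harish-Chandra's theory of supertempered distributions together with Arthur's orthonormal basis of the elliptic Grothendieck group—is indispensable: it produces the vanishing of $(\theta_{\pi'},\theta_\pi)_K$ for $\pi'\not\cong\pi$ without any direct $\widetilde K$-type-by-$\widetilde K$-type comparison, thereby completing the argument without detailed information about the Dirac index of individual limits.
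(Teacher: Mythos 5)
The paper does not actually prove this statement: it is Labesse's theorem, cited from \cite{Lab1}, and the paragraph preceding it only checks the defining integral formula against discrete series $\pi'$ as a consistency remark. Your attempt to give a complete proof for all irreducible tempered $\pi'$ by combining the vanishing of the Dirac index for non-elliptic representations (Section~7) with the pairing identity of Theorem~\ref{elliptic pairing} and the orthogonality relations of Section~8 is a reasonable route, and it goes genuinely beyond what the paper itself records; the three-way case split (non-elliptic, elliptic and $\pi'\not\cong\pi$, and $\pi'\cong\pi$) is the right shape for such an argument.

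There are, however, two concrete problems. First, the sign bookkeeping at the end is wrong. You correctly reduce to $\tr\pi'(f_\pi)=(-1)^q(\theta_{\pi'},\theta_\pi)_K$; but since $\theta_\pi=(-1)^q\chi_\mu$, the pairing $(\theta_\pi,\theta_\pi)_K$ already equals $|(-1)^q|^2(\chi_\mu,\chi_\mu)_K=1$, so your formula yields $\tr\pi(f_\pi)=(-1)^q$, not $1$ --- the claimed cancellation does not occur, because the $(-1)^q$ appears an odd number of times overall. (The same net sign emerges if one plugs $\theta_{\pi'}=(-1)^q\chi_{\mu'}$ and $\theta_\pi=(-1)^q\chi_\mu$ directly into the paper's displayed Labesse formula, giving $(-1)^q(\chi_{\mu'},\chi_\mu)$ rather than the $(\chi_{\mu'},\chi_\mu)$ the paper asserts; this looks like a transcription slip in the paper's version of the formula and needs to be identified and fixed before your derivation closes.) Second, Corollary~\ref{orthogonality} only gives $(\Theta_{\pi'},\Theta_\pi)_{ell}\in\{0,\pm1\}$, and the theorem of Harish-Chandra quoted just after it says nonvanishing forces $\Phi_{\pi'}=\pm\Phi_\pi$. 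To conclude that the pairing vanishes for every elliptic tempered $\pi'\not\cong\pi$, you must actually rule out $\Phi_{\pi'}=\pm\Phi_\pi$; this follows because $\pi$ is a discrete series and so has a regular parameter, while any limit of discrete series is singular, and distinct discrete series have distinct parameters. Your appeal to ``the orthonormal-basis statement that follows'' hides precisely this step, which should be made explicit.
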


\begin{remark}\label{character}
The orbital integrals of the pseudo-coefficient $f_\pi$ are easily computed for $\gamma$ regular semisimple:
$$\caO_\gamma(f_\pi)=\begin{cases}\Theta_\pi(\gamma^{-1}) \text{\ \ \ if  $\gamma$ is elliptic}\\
                                                        0\text{\ \ \ \ \ \ \ \ \ \ \  if $\gamma$ is not elliptic}.\\  \end{cases} $$
\end{remark}

%%%%%%%%%%%%%%%%%%%%%%%% Section 11 %%%%%%%%%%%%%%%%%%%%%%%%%%%%%%%%
\section{Endoscopic transfer }
%%%%%%%%%%%%%%%%%%%%%%%%%%%%%%%%%%%%%%%%%%%%%%%%%%%%%%%%%%%%%%%%%%%%
%

In the Langlands program a cruder form of conjugacy called stable conjugacy plays an important role.
The study of Langlands functoriality often leads to correspondence that is defined only up to stable conjugacy.
The endoscopy theory investigates the difference between ordinary and stable conjugacy
and how to understand ordinary conjugacy inside stable conjugacy.
The aim is to recover orbital integrals and characters from endoscopy groups.

Recall that $G$ is a connected reductive algebraic group defined over $\bbR$.
Denote by $G^\vee$ the complex dual group and ${}^LG$ the $L$-group which is
the semidirect product of $G^\vee$ and the Weil group $W_\bbR$.  A Langlands
parameter is an $L$-homomorphism
$$\phi\colon W_\bbR\rightarrow {}^LG.$$
Two Langlands parameters are equivalent if they are conjugated by an inner automorphism of $G^\vee$.
An equivalence class of Langlands parameters is associated
to a packet of irreducible admissible representations of
$G(\bbR)$ [L2].  The $L$-packets  of Langlands parameters with bounded image
consist of tempered representations. Temperedness is respected by
$L$-packets, but not unitarity.

The discrete series $L$-packets are in bijection with the irreducible finite-dimensional
 representations of the same infinitesimal character.
 One can construct all tempered
 irreducible representations using unitary parabolic induction and by taking subrepresentations.
 Two tempered irreducible representations $\pi$ and $\pi'$ are in the same
 $L$-packet if up to equivalence, $\pi$ and $\pi'$ are subrepresentations of parabolically
 induced representations from discrete series $\sigma$ and $\sigma'$ in the same
 $L$-packets.

 A stable distribution is any element of the closure of the space spanned by all distributions
 of the form $\sum_{\pi\in \Pi}\Theta_\pi$ for $\Pi$ any tempered $L$-packet.
 Such distributions can be transferred to inner forms of $G$ via the matching
 of the stable orbital integrals, while unstable distributions cannot be.

 For non-tempered case  we need the Arthur packets, which are parameterized by mappings
$$\psi\colon  W_\bbR \times SL(2,\bbC) \rightarrow {}^LG$$
for which the projection onto the dual group $G^\vee$ of $\psi(W_\bbR)$ is relatively compact.
Adams and Johnson [AJ] have constructed some $A$-packets consisting of unitary $A_\frq(\lambda)$-modules.
The determination of Dirac cohomology of
$A_\frq(\lambda)$-modules may have some bearing on answering Arthur's questions (See Section 9 of [A2])
on Arthur packet $\Pi_\psi$.

In the setting of endoscopy embedding
$$\xi: {}^LH\rightarrow {}^LG,$$
one has a map from Langlands parameters for $H$ to that for $G$.
The Langlands functoriality principle asserts that there should be a
map from the Grothendieck group of virtual representations of $H(\bbR)$ to
that of $G(\bbR)$, compatible with $L$-packets.

The endoscopy theory for real groups is established by Shelstad in a series of papers
[Sh1-5].
Recasting Shelstad's work explicitly
in terms of the general transfer factors defined later by Langlands and
Shelstad [LS] is the first of the `Problems for Real Groups' proposed by Arthur [A3].

We follow Labesse \S6.7 \cite{Lab2} for the description of the endoscopic transfer.
Let  $T$ be an elliptic torus of $G$ and
 $\kappa$ an endoscopic character. Let $H$ be the endoscopic
 group defined by $(T,\kappa)$.  Let $B_G$ be a Borel subgroup of $G$ containing $T$.  Set
 $$\Delta_B(\gamma)=\Pi_{\alpha>0}(1-\gamma^{-\alpha}),$$
 where the product is over the positive roots defined by $B$.  There is only one choice
 of a Borel subgroup $B_H$ in $H$, containing $T_H$ and compatible with the isomorphism
 $j\colon T_H\cong T$.

Assume $\eta\colon {}^LH\rightarrow {}^LG$ is an admissible embedding (see \S6.6 \cite{Lab2}).
Then for any pseudo-coefficent $f$ of a discrete series of $G$, there is a linear combination
$f^H$ of pseudo-coefficents of discrete series of $H$ such that for $\gamma=j(\gamma_H)$ regular
in $T(\R)$ (see Prop. 6.7.1 \cite{Lab2}), one has
\begin{equation}\label{endoscopy}
\caS\caO_{\gamma_H}(f^H)=\Delta(\gamma_H,\gamma_G)\caO^\kappa_{\gamma_G}(f),
\end{equation}
 where the transfer factor
\begin{equation}\label{transfer}
\Delta(\gamma_H,\gamma_G)=(-1)^{q(G)-q(H)}\chi_{G,H}(\gamma)\Delta_B(\gamma^{-1})\Delta_{B_H}(\gamma_H^{-1})^{-1}.
\end{equation}

 The transfer $f\mapsto f^H$ of the pseudo-coefficents of discrete series can be extended to all of
functions in $C^\infty_c(G(\R))$ with extension of the correspondence $\gamma\mapsto \gamma_H$ (see
Theorem 6.7.2 \cite{Lab2}) so that the above identity (\ref{endoscopy}) holds for all $f$.

The geometric transfer $f\mapsto f^H$ is dual of a transfer for representations.
Given any admissible irreducible representation $\sigma$ of $H(\R)$, it corresponds to
an element $\sigma_G$ in the Grothendieck group of virtual representations of $G(\R)$ as follows.
Let $\phi$ be the langlands parameter for $\sigma$.
Let $\Sigma$ be the $L$-packet of the admissible irreducible representations
of $H(\R)$ corresponding to a Langlands parameter $\phi$ and $\Pi$ the L-packet of representations of
$G(\R)$ corresponding to $\eta \circ \phi$ (that can be an empty set if this parameter
is not relevant for $G$).

\begin{theorem} [Theorem 4.1.1 \cite{S}, Theorem 6.7.3 \cite{Lab2}] There is a function
$$\epsilon\colon \Pi \rightarrow \pm 1$$
such that, if we consider $\sigma_G$ in the Grothendieck group defined by
$$\sigma_G=\sum_{\pi\in\Pi}\epsilon(\pi)\pi$$
 then the transfer $\sigma\mapsto \sigma_G$ satisfies
 $$\tr \sigma_G(f)=\tr \sigma(f^H).$$
 \end{theorem}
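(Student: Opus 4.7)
The plan is to deduce the spectral identity $\tr\sigma_G(f)=\tr\sigma(f^H)$ from the geometric transfer relation \eqref{endoscopy} by dualization, with the signs $\epsilon(\pi)$ arising from the canonical parametrization of the $L$-packet $\Pi$ by characters of the component group of the Langlands parameter.

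I would proceed in three stages. First, reduce to a manageable class of test functions. Both sides are continuous distributions on $G(\R)$, so by density it suffices to verify the identity when $f$ is a pseudo-coefficient $f_{\pi_0}$ of a (limit of) discrete series $\pi_0\in\Pi$; these exhaust the tempered elliptic part of the trace Paley--Wiener space and the geometric transfer $f\mapsto f^H$ respects this reduction. For such $f$ Labesse's orbital-integral formula (Remark \ref{character}) gives $\caO_\gamma(f)=\Theta_{\pi_0}(\gamma^{-1})$ for $\gamma$ regular elliptic and $\caO_\gamma(f)=0$ otherwise; the $\kappa$-orbital integral $\caO^\kappa_{\gamma_G}(f)$ is therefore also supported on the elliptic set.

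Second, unfold $\tr\sigma(f^H)$ using the Weyl integration formula on $H(\R)$. Since $f^H$ is a linear combination of pseudo-coefficients of discrete series of $H(\R)$, the same support statement applied to $H$ collapses the integration to an integral over an elliptic torus $T_H(\R)$ of $\Theta_\sigma(\gamma_H)$ weighted by $\caS\caO_{\gamma_H}(f^H)$ and the Weyl denominator. Substituting \eqref{endoscopy} together with the explicit shape \eqref{transfer} of the transfer factor converts this into an integral over $T(\R)$ whose integrand is $(-1)^{q(G)-q(H)}\chi_{G,H}(\gamma)\,\Delta_B(\gamma^{-1})\Theta_\sigma(\gamma_H)\,\caO^\kappa_{\gamma_G}(f)$. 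Expanding $\Theta_\sigma$ on the compact Cartan of $H$ by Harish-Chandra's character formula turns the integrand into an explicit alternating sum of exponentials on $T(\R)$, and plugging in $\caO^\kappa_{\gamma_G}(f_{\pi_0})=\Theta_{\pi_0}^\kappa(\gamma^{-1})$ gives a closed-form elliptic expansion.

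Third, match this expansion with the expansion of $\sum_{\pi\in\Pi}\epsilon(\pi)\tr\pi(f_{\pi_0})$. By Labesse's pseudo-coefficient theorem, $\tr\pi(f_{\pi_0})=\delta_{\pi,\pi_0}$ for tempered $\pi$, so the right-hand side reduces to $\epsilon(\pi_0)$, and on the character side Harish-Chandra's formula for the discrete series $\Theta_\pi$ on the compact Cartan furnishes a second explicit exponential sum. Comparison of the two sums forces the value of $\epsilon(\pi)$, which is read off as the pairing of the Langlands parameter of $\pi$ with the endoscopic character $\kappa$ under the Kottwitz--Shelstad identification of $\Pi$ with the irreducible characters of $\pi_0(\mathrm{Cent}(\phi,G^\vee))$. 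The hard part is not producing \emph{some} signs but showing that the resulting $\epsilon$ is canonical, i.e.\ that the assignment depends only on $\pi\in\Pi$ and is insensitive to the base point of $\Pi$, the choice of Borel containing $T$, and the normalizations of Haar measure; this cocycle calculation is the cohomological core of Shelstad's analysis in \cite{Sh1-5}, and the precise form of the transfer factor \eqref{transfer} is engineered exactly so that the verification closes. Once $\epsilon$ is thus well defined, density propagates the identity from pseudo-coefficients to arbitrary $f\in C^\infty_c(G(\R))$, completing the proof.
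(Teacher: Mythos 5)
The paper does not prove this theorem; it is cited verbatim from Shelstad (Theorem 4.1.1 of \cite{S}) and Labesse's exposition (Theorem 6.7.3 of \cite{Lab2}), and no argument is given. So there is no internal proof to compare against. Assessing your sketch on its own merits: the broad strategy (deduce spectral transfer from the geometric transfer \eqref{endoscopy} by Fourier analysis of characters on the elliptic set) is the right one, but there is a genuine gap in the reduction step.

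You write that "by density it suffices to verify the identity when $f$ is a pseudo-coefficient $f_{\pi_0}$," with $\pi_0$ a (limit of) discrete series. This is not correct. Pseudo-coefficients of discrete series generate only the tempered elliptic part of the trace Paley-Wiener space; they are annihilated by every tempered representation that is parabolically induced from a proper Levi. When $\sigma$ is a tempered representation of $H(\R)$ that is not elliptic, the packet $\Pi$ consists of properly induced representations of $G(\R)$, all of which pair to zero with every pseudo-coefficient. For such $\sigma$ your reduction establishes $0 = 0$ but says nothing; the identity must still be verified for general $f\in C^\infty_c(G(\R))$ and there is no density argument that recovers it from the elliptic case. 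The actual proof in Shelstad and in Labesse handles this by parabolic descent: one reduces to the discrete series case by proving compatibility of the geometric transfer with constant terms along parabolic subgroups, and by using the characterization of tempered invariant eigendistributions to show that the map $f\mapsto\tr\sigma(f^H)$ is a virtual character of $G(\R)$. Your sketch contains neither of these ingredients. In addition, the well-definedness of $\epsilon$ under changes of base point, Borel, and Haar measure is acknowledged but not argued; this cocycle computation (through the $H^1(\Gamma, T)$-formalism tied to the transfer factors) is where the bulk of the technical work in Shelstad's papers lies, so it cannot be waved away.

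Where your proposal is sound is in the elliptic core: for $\sigma$ a discrete series of $H(\R)$, the manipulation via Labesse's orbital-integral formula (Remark \ref{character}), the Weyl integration formula on $T_H(\R)$, and Harish-Chandra's character formula does correctly reproduce the signs $\epsilon(\pi)$ as the values $\langle\kappa,\pi\rangle$ under the parametrization of $\Pi$ by characters of the component group, and this is indeed the mechanism by which the unstable combination emerges. If you reframe the argument as establishing the elliptic discrete-series case first and then citing parabolic descent and the invariant-eigendistribution uniqueness theorem for the extension to all tempered $\sigma$ and all $f$, you would have the correct skeleton of the Shelstad-Labesse proof.
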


In the following we suppose that $G(\R)$ has a compact maximal torus $T(\R)$,
and $\rho-\rho_H$ the difference of half sum of positive
roots for $G$ and $H$ respectively, defines a character of $T(\R)$.
In  \S 7.2 of [Lab2] Labesse shows that  the canonical
 transfer factor:
 $$\Delta(\gamma^{-1})=(-1)^{q(G)-q(H)}
 \frac{\sum_{w\in W(\frg)}\epsilon (w)\gamma^{w\rho}}{\sum_{w\in W(\frh)}\epsilon (w)\gamma^{w\rho_H}}$$
 is well-defined function.
 Then the transfer factor can be expressed more explicitly if $H$ is a subgroup of $G$.
Suppose that $\frg=\frh\oplus \frs$ is the orthogonal decomposition with
respect to a non-degenerate invariant bilinear form so that the form is non-degenerate on $\frs$.
 We write $S(\frg/\frh)$ for the spin-module of the Clifford algebra $C(\frs)$.  Then
$$\Delta(\gamma^{-1})=\ch S^+(\frg/\frh) -\ch S^-(\frg/\frh).$$
In other words,  $\Delta(\gamma^{-1})$ is equal to the character of the Dirac index of
the trivial representation with respect to the Dirac operator $D(\frg,\frh)$.
 If $\Theta_\pi$ is the character of a finite-dimensional representation $\pi$,
 then
 $$\Delta(\gamma^{-1})\Theta_\pi$$ is the character of the Dirac index of $\pi$.
 This character can be calculated easily from the Kostant formula in Section 5.
 We denote by $F_\lambda$ the irreducible finite-dimensional representation of $G(\R)$
 with highest weight $\lambda$ and by $E_\mu$ irreducible  finite-dimensional representation
 of $H(\R)$ with highest weight $\mu$. Then
 $$\Delta(\gamma^{-1})\Theta_{F_\lambda}= \sum_{w\in W^1}\Theta_{E_w(\lambda+\rho)-\rho_\frh}.$$
 Here $W^1$ is a subset of elements in $W$ corresponding to $W_\frh \backslash W$ as before.

 In view of Remark \ref{character}, the right hand side of (\ref{endoscopy})
 is the Dirac index of a combination of discrete series of $G(\R)$
 and the left hand side is a linear combination of discrete series of $H(\R)$.
 It follows from the Harish-Chandra formula for the character of discrete series
 and supertempered distributions (see Theorem \ref{super tempered}) that
 the Dirac index of a discrete series $\pi_\lambda$ with Harish-Chandra parameter $\lambda$ is
 $$\Delta(\gamma^{-1})\Theta{\pi_\lambda}=\sum_{w\in W_{K}^1} \text{sign}(w)\Theta_{\tau_{w\lambda}}.$$
Here ${\tau_{w\lambda}}$ denotes the discrete series for $H(\R)$ with Harish-Chandra parameter $w\lambda$, and
$W_{K}^1$ is a subset of elements in $W_{K}$ corresponding to $W_{H\cap K} \backslash W_{K}$.
This calculation is compatible with Labesse's calculation of the transfer of the pseudo-coefficients
of discrete series in \S 7.2 \cite{Lab2}.

The above interpretation of the transfer factors in certain cases of endoscopy
as the difference of the even and odd parts of the spin modules is clearly useful for
calculation.  It is also  reminiscent of the transfer factors for the metaplectic groups, which is given by
the formal difference of the metaplectic representations, in  the work by
Jeff Adams [A], David Renard [R1] and Wen-Wei Li [Li].  It is worthwhile investigating the
Dirac cohomology and Dirac index with respect to the symplectic Dirac operators in connection with
the Weyl algebras and the oscillator representations of metapletic groups.

%%%%%%%%%%%%%%%%%%%%%% Section 12 %%%%%%%%%%%%%%%%%%%%%%%%%%%%%
\section{Hypoelliptic representations}
%%%%%%%%%%%%%%%%%%%%%%%%%%%%%%%%%%%%%%%%%%%%%%%%%%%%%%%%%%%%%
%
In this final section we assume that $G(\R)\supset K(\R)$ is not necessarily of equal rank.
If $G(\R)$ is indeed not of equal rank, then there is no elliptic representation for $G(\R)$.
Still, we know $G(\R)$ has
representations with nonzero Dirac cohomology.  The natural generalization of
the concept of elliptic representation for unequal rank $G(\R)$ is the following.

\begin{definition}A representation is called {\it hypoelliptic} if its global character is not identically zero  on the set of regular elements in
a fundamental Cartan subgroup.
\end{definition}
By definition, an elliptic representation is hypoelliptic.

It is a natural question to ask the relationship between hypoelliptic representations
and representations with nonzero Dirac cohomology.

\begin{conj} Suppose that $\pi$ is an irreducible admissible representation.
Then  $H_D(X_\pi)\neq  0$ implies that $\pi$ is hypoelliptic.
\end{conj}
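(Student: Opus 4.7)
The plan is to deduce hypoellipticity of $\pi$ from the non-vanishing of $H_D(X_\pi)$ by means of a character formula on the fundamental Cartan. Fix a $\theta$-stable Borel subalgebra $\frb = \frh + \fru$, where $\frh = \frt + \fra$ is the complexified fundamental Cartan. The Hecht--Schmid theorem, realizing Osborne's conjecture, expresses on a suitable open subset of regular elements $h$ of the fundamental Cartan subgroup
\begin{equation*}
\Theta_{X_\pi}(h) \cdot \Delta(h) \;=\; \sum_{i} (-1)^i \,\tr\!\bigl(h \mid H_i(\fru, X_\pi)\bigr),
\end{equation*}
with $\Delta$ the appropriate Weyl denominator. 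Hypoellipticity of $\pi$ therefore follows once the virtual $H$-character $\sum_i (-1)^i \,\ch H_i(\fru, X_\pi)$ is shown to be not identically zero.

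I would bridge this Euler characteristic to Dirac cohomology in two steps. First, calculation in stages (Section 6) gives $H_D(\frg,\frt;X_\pi) = H_D(\frk,\frt;H_D(X_\pi))$; since $H_D(X_\pi)$ is a non-zero finite-dimensional $\widetilde{K}$-module by assumption, Kostant's theorem for the equal-rank pair $(\frk,\frt)$ forces $H_D(\frg,\frt;X_\pi)$ to be non-zero as well. Second, adapting the argument of Proposition 4.8 and Theorem 5.12 of [HX] to the finite-dimensional subspace $V \subseteq X_\pi \otimes S$ constructed in the proof of Theorem 9.1 (using the modified Dirac operator $\widetilde{D}(\frg,\frt)$ to cut down the potentially infinite-dimensional $\Ker D^2$), one obtains $\widetilde{T}$-equivariant embeddings
\begin{equation*}
H_D^{\pm}(\frg,\frt;X_\pi) \hookrightarrow H_{\pm}(\fru, X_\pi) \otimes \bbC_{-\rho(\fru)},
\end{equation*}
together with the character identity
\begin{equation*}
\ch H_D^+(\frg,\frt;X_\pi) - \ch H_D^-(\frg,\frt;X_\pi) = \bigl(\ch H_+(\fru,X_\pi) - \ch H_-(\fru,X_\pi)\bigr) \ast \ch \bbC_{-\rho(\fru)}.
\end{equation*}
Thus non-vanishing of the $\fru$-homology Euler characteristic is reduced to non-vanishing of the Dirac-index character for $D(\frg,\frt)$.

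The principal obstacle is the $\widetilde{T}$-parity condition $\Hom_{\widetilde{T}}\bigl(H_D^+(\frg,\frt;X_\pi), H_D^-(\frg,\frt;X_\pi)\bigr) = 0$, which rules out cancellation in the Dirac-index character and is a Cartan-level refinement of Conjecture 9.3. For $X_\pi$ of regular infinitesimal character the parity condition is immediate from Theorem 9.1 combined with Vogan's parity theorem for $\fru$-cohomology (Theorem 7.2 of [V1]), and this already settles the conjecture in that range. Beyond regular infinitesimal character the situation is genuinely delicate: the $SU(2,1)$ example following Theorem 9.1 shows that parity may hold for Dirac cohomology even while failing for $\fru$-cohomology, so the implication from $\fru$-parity is no longer automatic. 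A reasonable first attack is to verify the parity by hand for unitary $A_\frq(\lambda)$-modules with singular $\lambda$ using the explicit description of $H_D(A_\frq(\lambda))$ in Theorem 2.5, and then attempt to propagate the result through translation functors and the induction-based classification of unitary modules with non-zero Dirac cohomology conjectured at the end of the introduction. Once $\widetilde{T}$-parity is secured, Hecht--Schmid forces $\Theta_\pi$ not to vanish identically on the regular set of the fundamental Cartan, i.e., $\pi$ is hypoelliptic.
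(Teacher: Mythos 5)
The statement you are addressing is Conjecture 11.2 of the paper, which is left open there---there is no proof in the paper to compare against. Your write-up is accordingly a roadmap rather than a proof, and you say so yourself. The overall strategy you propose is the natural one: convert hypoellipticity via the Hecht--Schmid realization of Osborne's conjecture into non-vanishing of the virtual $\fru$-homology Euler characteristic on the fundamental Cartan, and then bridge that Euler characteristic to a Dirac index by calculation in stages and the $[{\rm HX}]$-style injections, exactly the machinery assembled in Sections 5, 6 and 9. You also correctly isolate the principal obstruction---the parity condition beyond regular infinitesimal character, a Cartan-level sharpening of Conjecture 9.3---and honestly flag it as unresolved. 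This much is a fair assessment of where the problem stands.

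However, there is a second gap that your sketch glosses over, and it is specific to the setting of Section 11, where $G(\R)$ and $K(\R)$ need not have equal rank. Throughout your outline you work with $D(\frg,\frt)$, where $\frt$ is the compact Cartan of $\frk$. But the $\fru$-homology in the Hecht--Schmid formula is taken with respect to a Borel $\frb=\frh+\fru$ whose Levi is the \emph{full} fundamental Cartan $\frh=\frt\oplus\fra$, and the $[{\rm HX}]$ Proposition 4.8 / Theorem 5.12 machinery, which supplies the decomposition $D=d+2\partial$ and the embeddings into $\fru$-(co)homology, is built for the cubic Dirac operator $D(\frg,\frh)$ attached to that Levi---not for $D(\frg,\frt)$. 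In the equal-rank case of Theorem 9.1 one has $\frt=\frh$ and this distinction evaporates; here it does not. Passing from $D(\frg,\frt)$ to $D(\frg,\frh)$ requires yet another application of calculation in stages along the chain $\frt\subset\frh\subset\frg$, i.e.\ $D(\frg,\frt)=D(\frg,\frh)+D_\Delta(\frh,\frt)$, together with an analysis of how the Clifford factor $C(\fra)$ and the corresponding Koszul-type operator $D_\Delta(\frh,\frt)=\sum_i Y_i\otimes Y_i$, $Y_i\in\fra$, interact with the grading and with the Euler characteristic. Likewise, the equivariance in your displayed embeddings and character identity should be with respect to the cover of the full Cartan subgroup $T(\R)A(\R)$, not just $\widetilde{T}$, and Vogan's parity theorem (Theorem 7.2 of $[{\rm V1}]$) is a statement about the $\frh$-weight structure. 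Until the $\fra$-direction is handled explicitly, even the regular-infinitesimal-character half of the conjecture in unequal rank is not actually established by your argument.
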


Recall that if  $G(\R)$ is of equal rank with
$K(\R)$ then an irreducible tempered representation is either
elliptic or induced from a tempered elliptic representation by parabolic induction.

\begin{conj}  A unitary representation is either having nonzero Dirac cohomology
or induced from a unitary representation with nonzero Dirac cohomology by parabolic induction.
\end{conj}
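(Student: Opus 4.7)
The plan is to proceed by induction on the semisimple rank of $G(\bbR)$, combined with a case analysis on whether $G(\bbR)$ and $K(\bbR)$ are of equal rank and on the regularity of the infinitesimal character of $\pi$. The base case of tori or compact groups is immediate, since the trivial representation already has nonzero Dirac cohomology. So let $\pi$ be an irreducible unitary representation and assume the conjecture for all real reductive groups of strictly smaller dimension.

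Step one is to dispose of the equal rank case with regular infinitesimal character. If $G(\bbR)$ and $K(\bbR)$ are of equal rank and $X_\pi$ has regular infinitesimal character, then the argument in the proof of Theorem 9.6 upgrades regularity to strong regularity via analytic integrality for $K(\bbR)$, Salamanca-Riba's theorem gives $X_\pi \cong A_\frq(\lambda)$, and Theorem 2.5 forces $H_D(X_\pi) \neq 0$. Hence the conjecture holds trivially in this subcase, which disposes of a large portion of the unitary dual.

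Step two is the non-equal-rank case. Here $G(\bbR)$ admits no discrete series, so every irreducible tempered representation is parabolically induced from a cuspidal parabolic $P = MAN$ with $M$ strictly smaller. For a non-tempered unitary $\pi$, I would realize $\pi$ via the Langlands classification as the Langlands quotient of $\Ind_P^G(\sigma \otimes e^\nu)$, with $\sigma$ tempered on $M$ and $\nu$ in the closed positive chamber. The key technical input needed is that for a cuspidal parabolic $P = MAN$ with $M$ equal-rank, and with $\frt_M$ a compact Cartan of $\frm$ such that $\frt_M + \fra$ is a fundamental Cartan of $\frg$, the calculation-in-stages machinery of Section 6 applied to the decomposition of $D(\frg, \frt_M)$ along $\frm + \fra$ should transfer nonvanishing of Dirac cohomology across parabolic induction, up to a shift by $\rho(\fra)$. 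Granting this transfer, the non-equal-rank case reduces inductively to the equal-rank Levi $M$.

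The main obstacle is step three: the equal-rank case with singular infinitesimal character. Here Salamanca-Riba is not available, and $\pi$ need not be an $A_\frq(\lambda)$-module. The approach would be to appeal to a Vogan-style classification of the unitary dual via cohomological induction $\caR_\frq$ from genuinely unipotent representations on smaller Levis. The delicate point is that $\caR_\frq$ is not real parabolic induction, so one must translate a cohomological induction statement into a real parabolic induction statement, presumably via intertwining functors or a translation principle at the level of the Grothendieck group. Even after this translation, one must show that truly unipotent unitary representations with zero Dirac cohomology always arise as irreducible constituents of real parabolic inductions from unitary representations with nonzero Dirac cohomology on Levis. This is essentially a structural assertion on the unitary dual at singular infinitesimal character, likely requiring new input from Arthur's unipotent packets together with a careful analysis of how the even/odd parts of the spin module propagate through $\caR_\frq$, and in my view represents the true depth of the conjecture.
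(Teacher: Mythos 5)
This statement is a conjecture in the paper (the final conjecture of Section 12), and the paper does not offer a proof: it only records that the assertion is known to hold for $GL(n,\R)$, $GL(n,\bbC)$, $GL(n,\bbH)$ and $\widetilde{GL}(n,\R)$. So there is no ``paper's proof'' to compare against; your proposal should be assessed as an independent attempt, and as such it has genuine gaps.

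The most concrete problem is a circularity in your step one. You claim that in the equal-rank case with regular infinitesimal character, ``the argument in the proof of Theorem 9.6 upgrades regularity to strong regularity via analytic integrality for $K(\bbR)$.'' But look at how the paper actually runs that argument: the hypothesis of Theorem 9.6 is that $\pi$ is \emph{elliptic}, hence has nonzero Dirac index, hence nonzero Dirac cohomology, and it is precisely the nonvanishing of $H_D(X_\pi)$ that forces the infinitesimal character to be analytically integral, which in combination with regularity gives strong regularity, which is what Salamanca-Riba requires. You cannot run this chain if all you assume is regularity, because the integrality step is exactly the conclusion you are trying to reach. Regular infinitesimal character alone does not yield strong regularity; indeed, there are irreducible unitary representations (e.g.\ certain complementary series on equal-rank rank-one groups) with regular but non-integral infinitesimal character that are not $A_\frq(\lambda)$-modules and need not have nonzero Dirac cohomology, so step one does not ``dispose of'' the regular equal-rank case as claimed. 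Beyond that, step two only sketches a plausible transfer of non-vanishing across cuspidal parabolic induction via calculation in stages, but does not address the mismatch between the Langlands quotient and the full induced representation (the quotient is the unitary object, not the induced module), and step three is, by your own admission, not closed. As it stands, what you have is a programme identifying where the difficulty lies rather than a proof, and the paper itself leaves the statement open.
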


The above conjecture holds  for $GL(n,\R)$, $GL(n,\bbC)$, $GL(n,\bbH)$
as well as $\widetilde{GL}(n,\R)$ (the two-fold covering group of $GL(n,\R)$).

A recent preprint of Adams-van Leeuwen-Trapa-Vogan \cite{ALTV} gives an algorithm to determine
the irreducible unitary representations.  The  above conjecture means that one may regard
unitary representations with nonzero Dirac cohomology as  `cuspidal' ones.

\section*{Acknowledgement}  It is a great pleasure to thank David Vogan and Pavle Pand\v zi\'c
for the helpful discussions.

%%%%%%%%%%%%%%%%%%%%%%%%%%%%%%%%%%%%%%%%%%%%%%%%%%%%%%%%%%%%%


\begin{thebibliography}{ALTV}

\bibitem [A]{A} J. Adams,
\emph{Lifting of characters on orthogonal and metaplectic groups},
Duke Math. J.\textbf{92} (1998), 129--178.

\bibitem [AJ]{AJ} J. Adams, J. Johnson,
\emph{Endoscopic groups and packets of non-tempered representations},
Compositio Math. \textbf{64} (1987), 271--309.

\bibitem [ALTV]{ALTV} J. Adams, M. van Leeuwen, P. Trapa, D. A. Vogan, Jr.,
\emph{Unitary representations of real reductive groups},
arXiv:1212:2192.

\bibitem [AM]{AM} A.~Alekseev, E.~Meinrenken,
\emph{Lie theory and the Chern-Weil homomorphism},
Ann. Scient. \'Ec. Norm. Sup., $4^e$ s\'erie, t. \textbf{38} (2005),  303-338.


\bibitem [A1]{A1} J. Arthur, \emph{On elliptic tempered characters},
Acta Math. \textbf{171} (1993), 73--138.

%\bibitem [A2]{A2} J. Arthur, \emph{On the Fourier transforms of weighted orbital integrals},
%J. Reine Angew. Math. \textbf{452} (1994), 163--217.

\bibitem [A2]{A3} J. Arthur, \emph{Problems for real groups},
in Representation Theory of Real Reductive Lie groups, Contemporary Mathematics
\textbf{472} (2008), 39--62.

%\bibitem[AS]{AS} M.~Atiyah, W.~Schmid, \emph{A geometric
%construction of the discrete series for semisimple Lie groups}
%Invent. Math. \textbf{42} (1977), 1-62.

%\bibitem [AS']{AS'} M.~Atiyah, W.~Schmid,
%\emph{Erratum: A geometric construction of the discrete series for
%semisimple Lie groups}, Invent. Math. \textbf{54} (1979), 189--192.

\bibitem [BCT]{BCT} D. Barbasch, D. Ciubotaru, P. Trapa, \emph{Dirac cohomology
for graded Hecke algebras}, Acta Math. \textbf{209} (2012), 197-227.


\bibitem [BP1]{BP1} D. Barbasch, P. Pand\v zi\'c,
\emph{Dirac cohomology and unipotent representations of
complex groups}, in
``Noncommutative Geometry and Global Analysis",
A. Connes, A. Gorokhovsky, M. Lesch, M. Pflaum, B. Rangipour (eds.),
Contemporary Mathematics, vol. 546, American Mathematical Society,
2011, 1--22.

\bibitem [BP2]{BP2} D. Barbasch, P. Pand\v zi\'c, \emph{Dirac cohomology of unipotent representations of $Sp(2n,\bbR)$ and $U(p,q)$},
 to appear in J. Lie Theory. (arXiv:1302.2503)

%\bibitem [BV]{BV} D. Barbasch,  D. A. Vogan, Jr., \emph{The local structure of characters},
%Jour. Funct. Anal.\textbf{37} (1980), 27--55.


\bibitem [BGG]{BGG} J. Bernstein, I. Gelfand, S. Gelfand, \emph{Category of $\frg$-modules},
Funct. Anal. and Appl. \textbf{10} (1976), 87--92.

%\bibitem [Bi]{Bi} J.-M. Bismut, \emph{Hypoelliptic Laplacian and Orbital Integrals},
%Annals of Mathematics Studies 177, Princeton University Press, Princeton and Oxford,  2011.

%\bibitem [BH]{BH} B. Boe, M. Hunziker, \emph{Kostant modules in Blocks
%of Category $\mathcal{O}^\frp$}, Comm. Algebra \textbf{37} (2009), 323--356.

\bibitem [BW]{BW} A. Borel, N. R. Wallach,
\emph{Continuous cohomology, discrete subgroups, and representations of reductive groups}, second edition,
Mathematical Surveys and Monographs 67, American Mathematical Society, Providence, RI, 2000.



%\bibitem [CE]{CE} H. Cartan, S. Eilenberg, \emph{Homological Algebra},
%Princeton University Press, Princeton, New Jersey, 1956.

%\bibitem [CO]{CO} W. Casselman, M. Osborne, \emph{The
%$\frn$-cohomology of representations with an infinitesimal
%character}, Comp. Math. \textbf{31} (1975), 219--227.

\bibitem [CH]{CH} M.-K. Chuah, J.-S. Huang, \emph{Dirac cohomology and geometric
quantization}, J. Reine Angew. Math. DOI 10.1515/crelle-2014-0050.

\bibitem [CT]{CT} D. Ciubotaru, P. Trapa, \emph{Characters of Springer Representations
on elliptic conjugacy classes},
Duke Math. J. \textbf{162} (2013), 201--223.

%\bibitem [D]{D} P.~A.~M. Dirac,
%\emph{The quantum theory of the electron},
%Proc. Roy. Soc. London Ser. A. \textbf{117} (1928), 610--624.

\bibitem [DH1]{DH1} C.-P. Dong, J.-S. Huang, \emph{Jacquet modules and Dirac
cohomology}, Adv. Math. \textbf{226} (2011), 2911-2934.

\bibitem [DH2]{DH2} C.-P. Dong, J.-S. Huang, \emph{Dirac
cohomology of cohomologically induced modules},  Amer. J.  Math. \textbf{137} (2015), 37-60.

\bibitem [E]{E} T. Enright, \emph{Analogues of Kostant's
$\fru$-cohomology formulas for unitary highest weight modules}, J.
Reine Angew.  Math. \textbf{392} (1988), 27--36.

%\bibitem [EHW]{EHW} T. Enright, R. Howe and N. Wallach, \emph{A
%classification of unitary highest weight modules}, in
%\emph{Representation theory of reductive groups, Park City, Utah,
%1982}, Birkh\"auser, Boston, 1983, 97--143.

%\bibitem [EW]{EW} T.~Enright, J.~Willenbring, \emph{Hilbert series, Howe duality
%and branching rules for classical groups}, Ann. Math. \textbf{159} (2004), no.1, 337--375.


%\bibitem [GW]{GW} R.~Goodman, N.~Wallach, \emph{Symmetry, Representations, and Invariants},
%GTM 255, Springer, 2009.

\bibitem[HC1]{HC1} Harish-Chandra, \emph{The characters of semisimple Lie groups}, Trans. Amer.
Math. Soc. \textbf{83} (1956), 98-163.

\bibitem[HC2]{HC2} Harish-Chandra, \emph{Harmonic analysis on real reductive groups I.
The theory of the constant term}, J. Funct. Anal. \textbf{19} (1975),
104-204.

\bibitem [HC3] {HC} Harish-Chandra,
\emph{Discrete series for semisimple Lie groups, I and II}, Acta
Math. \textbf{113} (1965), 242-318 and \textbf{116} (1966),
1-111.

\bibitem[HC4]{HC3} Harish-Chandra, \emph{Supertempered Distributions on Real Reductive Groups},
Studies in Applied Mathematics,  Advances in Mathematics Supplementary Studies, \textbf{8} (1983),
139-153. (It is contained in Harish-Chandra's collected Collected Papers, Volumed IV, 447-461.)

%\bibitem [HS] {HS} H. Hecht, W. Schmid,
%\emph{A proof of Blattner's conjecture}, Invent. Math. \textbf{31}
%(1975), 129-157.

%\bibitem [HoP1]{HoP2} R.~Hotta, R.~Parthasarathy,
%\emph{A geometric meaning of the multiplicities of integrable discrete classes in $L^2(\Gamma\backslash G)$},
%Osaka J. Math. \textbf{10} (1973), 211--234.

%\bibitem [HoP2]{HoP} R.~Hotta, R.~Parthasarathy,
%\emph{Multiplicity formulae for discrete series},
%Invent. Math. \textbf{26} (1974), 133--178.

\bibitem[H1]{H1} J.-S. Huang,
\emph{Dirac cohomology and Dirac induction},
 Science China. Math. \textbf{54} (2011), 2373--2381.

\bibitem[H2]{H2} J.-S. Huang,
\emph{Dirac cohomology and generalization of classical branching rules},
Developments and Retrospectives in Lie Theory, Algebraic Method,
edited by G. Mason, I. Penkov and J. Wolf, Springer, 2014, 207-228.

\bibitem [HKP]{HKP} J.-S.Huang, Y.-F. Kang, P.~Pand\v zi\'c, \emph{Dirac
cohomology of some Harish-Chandra modules}, Transform. Groups \textbf{14} (2009), no. 1, 163--173.

\bibitem [HP1]{HP1} J.-S Huang, P. Pand\v{z}i\'{c}, \emph{Dirac
cohomology, unitary representations and a proof of a conjecture of
Vogan}, J. Amer. Math. Soc. \textbf{15} (2002), 185--202.

\bibitem [HP2]{HP2} J.-S Huang, P. Pand\v{z}i\'{c}, \emph{Dirac Operators in
Representation Theory}, Math. Theory Appl., Birkh\"{a}user, 2006.

\bibitem [HP3]{HP3} J.-S Huang, P. Pand\v{z}i\'{c}, \emph{Dirac
cohomology for Lie Superalgebras}, Transform. Group, \textbf{10} (2005), 201-209.

\bibitem [HPP]{HPP} J.-S. Huang, P.~Pand\v zi\'c, V.~Protsak,
\emph{Dirac cohomology of Wallach representations}, Pacific J. Math. \textbf{250} (2011), 163--190.

\bibitem [HPR]{HPR} J.-S. Huang, P. Pand\v{z}i\'{c}, D. Renard, \emph{Dirac
operators and Lie algebra cohomology}, Represent. Theory \textbf{10}
(2006), 299--313.

\bibitem [HPZ]{HPZ} J.-S. Huang, P. Pand\v zi\'c, F.-H.~Zhu,
\emph{Dirac cohomology, K-characters and branching laws}, Amer. J. Math. \textbf{135} (2013),  1253--1269.


\bibitem [HX]{HX} J.-S. Huang, W. Xiao,
\emph{Dirac cohomology of highest weight modules}, Selecta Math. \textbf{18} (2012), 803--824.

\bibitem [Hum1]{Hum1} J. E. Humphreys,
\emph{Introduction to Lie algebras and representation theory},
Springer-Verlag, 1972. (Third printing, revised, 1980.)

\bibitem [Hum2]{Hu} J. E. Humphreys, \emph{Representations of Semisimple Lie
Algebras in the BGG Category $\mathcal{O}$}. GSM. \textbf{94}, Amer. Math. Soc., 2008.

%\bibitem [J]{J} D. Jiang, \emph{Automorphic integral transforms for classical groups I: endoscopy correspondences},  Automorphic Forms, L-functions and Related %Geometry: Assessing the Legacy of I.I. Piatetski-Shapiro, Contemporary Mathematics, AMS, to appear.

\bibitem [KMP]{KMP} V. G. Kac, P. M\"oseneder Frajria and P. Papi, \emph{Multiplets of
representations, twisted Dirac operators and Vogan's conjecture
in affine setting}, Adv. Math. \textbf{217} (2008), 2485-2562.

%\bibitem [Ka]{Ka} D. Kazhdan, \emph{Cuspidal geometry of p-adic groups},

%\bibitem [Kn]{Kn} A. Knapp, \emph{Lie Groups Beyond An
%Introduction}, Progress in Math. Vol. \textbf{140}, Birkh\"{a}user, 2002.

\bibitem [KV]{KV} A.~W. Knapp, D.~A. Vogan,
\emph{Cohomological induction and unitary representations}, Princeton University Press, 1995.


%\bibitem [Ko]{Ko} B. Kostant, \emph{Symplectic spinors}, Symposia Mathematica
%Vol. \textbf{XIV} (1974), 139-152.

\bibitem [Ko1]{Ko1} B. Kostant, \emph{A cubic Dirac operator and the
emergence of Euler number multiplets of representations for equal
rank subgroups}, Duke Math. J. \textbf{100} (1999), 447--501.

\bibitem [Ko2]{Ko2} B. Kostant, \emph{A generalization of the
Bott-Borel-Weil theorem and Euler number multiplets of
representations}. Lett. Math. Phys. \textbf{52} (2000), 61--78.

\bibitem [Ko3]{Ko3} B. Kostant, \emph{Dirac cohomology for the cubic Dirac
operator, Studies in Memory of Issai Schur}, Progress in Math. Vol.
\textbf{210} (2003), 69--93.

%\bibitem [Kud]{Ku} S. Kudla, \emph{Seesaw dual reductive pairs},
%Automorphic Forms of Several Variables (Katata, 1983), Progr. Math. \textbf{46}, Birkh\"auser, Boston, 1984, 244--268.

\bibitem [Ku]{Ku} S. Kumar,
\emph{Induction functor in non-commutative equivariant cohomology and Dirac cohomology},
J. Algebra \textbf{291} (2005), 187--207.

\bibitem [Lab1]{Lab1} J.-P. Labesse, \emph{Pseudo-coefficients tr\`es cuspidaux et K-th\'eorie},
Math. Ann \textbf{291} (1991), 607-616.

\bibitem [Lab2]{Lab2} J.-P. Labesse, \emph{Introduction to endoscopy}, in
Representation Theory of Real Reductive Lie Groups, Contemporary Mathematics \textbf{472} (2008),
175-213.

\bibitem [L1]{L1} R.~P. Langlands,
\emph{The dimension of spaces of automorphic forms},
Amer. J. Math. \textbf{85} (1963), 99--125.

\bibitem [L2]{L2} R.~P. Langlands, \emph{On the classification of irreducible representations of real algebraic groups},  Math. Surveys and Monographs \textbf{31} (1989), AMS, 101-170.

\bibitem [Li]{Li} W.-W. Li, \emph{Spectral transfer for metaplectic groups. I. Local character relations},  preprint. (Arxiv: 1409.6106)

\bibitem [LS]{LS} R.~P. Langlands, D. Shelstad, \emph{On the definition of transfer factors},  Math. Ann. \textbf{278} (1987), 219--271.

%\bibitem [Lit1]{Lit1} D.~E. Littlewood, \emph{The Theory of Group Characters and Matrix Representations
%of Groups}, Oxford Univ. Press, New York, 1940.

%\bibitem [Lit2]{Lit2} D.~E. Littlewood, \emph{On invariant theory under restricted groups},
%Philos. Trans. Roy. Soc. London, Ser. A, \textbf{239} (1944), 387--417.

\bibitem [MP]{MP} S. Mehdi, R. Parthasarathy, \emph{Cubic Dirac cohomology for generalized Enright-Varadarajan modules}, J.  Lie Theory \textbf{21} (2011), 861-884.

\bibitem [MZ]{MZ} S. Mehdi, R.~Zierau, \emph{The Dirac cohomology of a finite dimensional representation}
Proc. Amer. math. Soc. \textbf{142} (2014), 1507--1512.

%\bibitem [MZ1]{MZ1} S.~Mehdi and R.~Zierau, \emph{Harmonic spinors on symmetric spaces},
%Journal of Functional Analysis \textbf{198} (2003), no. 2, 536-557.

%\bibitem [MZ2]{MZ2} S.~Mehdi and R.~Zierau, \emph{Principal series and harmonic spinors},
%Adv. Math. \textbf{199} (2006), no. 1, 1-28.

%\bibitem [N]{N} D. Nadler, \emph{The geometric nature of the fundamental lemma}, Bull. Amer. Math. Soc. \textbf{49} (2012), 1--50.


\bibitem [P]{P} R. Parthasarathy, \emph{Dirac operator and the discrete
series}, Ann. of Math. \textbf{96} (1972), 1--30.

\bibitem[R]{R} D. Renard,
\emph{Endoscopy for $Mp(2n,\bbR)$},  Amer. J. Math. \textbf{121} (1999), 1215-1243.

%\bibitem[R2]{R2} D. Renard,
%\emph{Euler-Poincar\'e pairing, Dirac index and elliptic pairing for Harish-Chandra modules},  preprint, 2014. (arxiv: 1049.4166)


\bibitem[SR]{SR} S.~A. Salamanca-Riba,
\emph{On the unitary dual of real reductive Lie groups
and the $A_\frq(\lambda)$ modules: the strongly regular case},
 Duke Math. . \textbf{96} (1998), 521-546.


%\bibitem [Sc1]{S} W.~Schmid, \emph{Die Randwerte holomorpher Funktionen auf hermitesch symmetrischen Räumen},
%Invent. Math. \textbf{9} (1969/1970), 61 -- 80.

%\bibitem [SS]{SS} P. Schneider, U. Stuhler \emph{Representation theory and sheaves on the Bruhat-Tits buildings}, IHES Publ. Math. %\textbf{85} (1997), 97--191.

\bibitem [Sh1]{Sh1} D. Shelstad, \emph{Notes on L-indistinguishability (based on a lecture by R.P. Langlands)}, in
Automorphic Forms, Representations and L-functions, Proc. Sympos. Pure Math.
\textbf{33}, Part 2, Amer. Math. Soc., 1979, 193--204.

\bibitem [Sh2]{Sh2} D. Shelstad, \emph{Characters and inner forms of a quasi-split group over R},  Compositio
Math. \textbf{39} (1979), 11--45.

\bibitem [Sh3]{Sh3} D. Shelstad, \emph{Orbital integrals and a family of groups attached to a real reductive group},
Ann. Sci. Ecole Norm. Sup. \textbf{12} (1979), 1--31.

\bibitem [Sh4]{Sh4}  D. Shelstad, \emph{Embeddings of L-groups}, Canad. J. Math. \textbf{33} (1981), 513--558.

\bibitem [Sh5]{S} D. Shelstad, \emph{L-indistinguishability for real groups}, Math. Ann. \textbf{259} (1982), 385--430.



\bibitem [V1]{V1} D.~A. Vogan, Jr., \emph{Irreducible characters of semisimple
Lie groups II.  The Kazhdan-Lusztig conjectures}, Duke Math. J.
\textbf{46} (1979), 805--859.

\bibitem [V2]{V2} D. A. Vogan Jr., \emph{Representations of Real Reductive
Lie Groups}, Progress in Math. Vol. \textbf{15}, Birkh\"{a}user, 1981.


\bibitem[V3]{V3} D. A. Vogan, Jr.
\emph{Dirac operator and unitary representations},
3 talks at MIT Lie groups seminar, Fall of 1997.

\bibitem [VZ]{VZ} D. A. Vogan, Jr., G. J. Zuckerman,
\emph{Unitary representations with non-zero cohomology}, Compostio Math. \textbf{53} (1984), 51--90.


\bibitem [W]{Wa1}  N. R. Wallach, \emph{Real Reductive Groups, Volume I},
Academic Press, 1988.

%\bibitem [Wa2]{Wa2}  N. R. Wallach, \emph{Real Reductive Groups, Volume II}
%Academic Press, 1992.



%\bibitem [Wo1]{Wo1} J.~A. Wolf, Jr., \emph{Partially harmonic spinors and representations of reductive Lie groups},
%J. Funct. Anal. \textbf{15} (1974), 117--154.

%\bibitem [Wo2]{Wo2} J.~A. Wolf, Jr., \emph{Unitary Representations on Partially Holomorphic Cohomology Spaces},
%Mem.  Amer. Math. Soc. \textbf{138} (1974).






\end{thebibliography}
\end{document}